\documentclass[10pt,a4paper]{amsart}

\usepackage[T1]{fontenc}
\usepackage[utf8]{inputenc}
\usepackage{lmodern}
\usepackage{microtype}
\usepackage[a4paper,textwidth=124mm,textheight=186mm,centering]{geometry}
\usepackage{amsmath,amssymb,amsthm,mathtools,mathrsfs}
\usepackage{enumitem}
\usepackage[hidelinks]{hyperref}

\newtheorem{theorem}{Theorem}[section]
\newtheorem{proposition}[theorem]{Proposition}
\newtheorem{lemma}[theorem]{Lemma}
\newtheorem{corollary}[theorem]{Corollary}
\theoremstyle{definition}
\newtheorem{definition}[theorem]{Definition}
\newtheorem{example}[theorem]{Example}
\theoremstyle{remark}
\newtheorem{remark}[theorem]{Remark}

\newcommand{\Mod}{\mathsf{Mod}}

\newcommand{\Hom}{\operatorname{Hom}}

\newcommand{\Ext}{\operatorname{Ext}}
\newcommand{\Soc}{\operatorname{Soc}}
\newcommand{\im}{\operatorname{Im}}

\newcommand{\ess}{\leq_{\mathrm e}}
\newcommand{\summand}{\leq^{\oplus}}

\title[Smith-orbit classification for $C4^{\ast}$-modules]
{Smith-Orbit Classification of Extensions and Exact-Sequence Asymmetry for
$C4^{\ast}$-Modules}

\author[C. Gokavarapu]{Chandrasekhar Gokavarapu}
\address{Department of Mathematics\\
Government College (Autonomous)\\
Rajahmundry, Andhra Pradesh 533105, India\\
ORCID: \href{https://orcid.org/0009-0006-5306-371X}{0009-0006-5306-371X}}
\email{chandrasekhargokavarapu@gmail.com}
\email{chandrasekhargokavarapu@gcrjy.ac.in}
\thanks{Corresponding author: Chandrasekhar Gokavarapu.}

\subjclass[2020]{Primary 16D10; Secondary 13C05, 15A21, 16E30}
\keywords{$C4$-module; $C4^{\ast}$-module; semi-weak-CS module;
short exact sequence; Dedekind domain; discrete valuation ring; homocyclic module;
Yoneda extension; Smith profile; truncated valuation ring}

\hypersetup{
  pdftitle={Smith-Orbit Classification of Extensions and Exact-Sequence Asymmetry for C4*-Modules},
  pdfauthor={Chandrasekhar Gokavarapu},
  pdfsubject={Module theory},
  pdfkeywords={C4-module, C4*-module, exact sequence, Dedekind domain, DVR, Smith profile, Ext}
}

\begin{document}

\begin{abstract}
Let $R$ be a ring and let a $C4^{\ast}$-module mean a module all of whose
submodules are $C4$-modules.  We first determine its asymmetric
exact-sequence behavior: kernel closure is unconditional, while explicit
commutative examples disprove split-extension and cokernel closure.  We then
classify the finite-length torsion objects over a Dedekind domain.  A primary
component is $C4$ exactly when it is homocyclic, and it is $C4^{\ast}$ exactly
when it is cyclic or semisimple; in finite length $C4^{\ast}$ and strongly
$C4^{\ast}$ coincide.

The main result is a complete orbit calculation over a discrete valuation
ring $V$.  Put $A=(V/(\pi^a))^r$, $C=(V/(\pi^c))^s$ and
$m=\min\{a,c\}$.  Then
\[
 \Ext^1_V(C,A)\cong
 \operatorname{Mat}_{r\times s}(V/(\pi^m)),
\]
and the $\operatorname{Aut}(A)\times\operatorname{Aut}(C)$-orbits are
classified by a Smith profile
$0\leq\nu_1\leq\cdots\leq\nu_q\leq m$,
$q=\min\{r,s\}$.  The corresponding middle term is
\[
 \bigoplus_{i=1}^{q}
 \bigl(V/(\pi^{\nu_i})\oplus V/(\pi^{a+c-\nu_i})\bigr)
 \oplus (V/(\pi^a))^{r-q}\oplus(V/(\pi^c))^{s-q},
\]
where $V/(\pi^0)=0$.  This gives exact orbit-level criteria: the middle
term is $C4$ precisely for the zero orbit with $a=c$, or for an invertible
square orbit with $r=s$; it is $C4^{\ast}$ precisely for the zero orbit with
$a=c=1$, or for a unit scalar orbit with $r=s=1$.  We reformulate the Smith
data as a valuation--rank profile, determine the complete geometry and
additivity of the preservation loci, and globalize the result prime by prime.
Consequently every extension between arbitrary finite-length torsion
$C4^{\ast}$-modules over a Dedekind domain is decided by explicit local orbit
invariants.
\end{abstract}

\maketitle
\markboth{C. Gokavarapu}
{Smith-orbit classification for $C4^{\ast}$-modules}

\section{Introduction}

The $C4$ condition was introduced to weaken the classical $C3$ condition
without abandoning the summand geometry that makes the latter useful.  A
module $M$ is $C4$ when, for every decomposition $M=A\oplus B$ and every
homomorphism $f:A\to B$ whose kernel is a direct summand of $A$, the image
of $f$ is a direct summand of $B$.  The theory and its equivalent
perspectivity formulations were developed in
\cite{DingIbrahimYousifZhou2017C4,AltunOzarslanIbrahimOzcanYousif2018Perspective},
with the dual $D4$ theory in \cite{DingIbrahimYousifZhou2017D4}.  Later work
connected $C4$-modules with exchange and decomposition phenomena
\cite{IbrahimYousif2019DualSquareFree,IbrahimYousif2022C4Exchange,Nielsen2010SquareFreeExchange}.
This connection belongs to the wider exchange-module theory developed, in
particular, by Warfield and by Zimmermann-Huisgen--Zimmermann
\cite{Warfield1972Exchange,ZimmermannHuisgenZimmermann1984Exchange}; those
results motivate the summand viewpoint but do not decide the hereditary or
exact-sequence questions considered here.

The hereditary enlargement is recent.  Following
\cite{IbrahimEidElGuindy2026OnC4}, a module is $C4^{\ast}$ when every one of
its submodules is $C4$.  The same source defines the strongly
$C4^{\ast}$-modules by adding the semi-weak-CS condition.  These definitions
lead naturally to an exact-sequence question.  Given
\[
0\longrightarrow A\longrightarrow B\longrightarrow C\longrightarrow0,
\]
which implications among the $C4^{\ast}$ properties of $A,B,C$ are valid?
The question looks symmetric, but the universal quantifier over submodules
already predicts otherwise.  Kernels are submodules; cokernels are
quotients; and the middle term of an extension may contain mixed submodules
that occur in neither end term.

There are two classical orbit theories adjacent to this question.  The first
studies automorphism orbits inside a fixed finite abelian group.  It begins
with Birkhoff's subgroup analysis \cite{Birkhoff1935Subgroups}; explicit
automorphism descriptions appear in
\cite{HillarRhea2007Automorphisms,GolasinskiGoncalves2008Automorphisms}, and
element, tuple, pair and subgroup orbits are developed in
\cite{SchwachhoferStroppel1999Orbits,DuttaPrasad2011Degenerations,
CalvertDuttaPrasad2013Tuples,AnilkumarPrasad2014Pairs}.  Those actions keep an
ambient module fixed.  Our acting group instead changes the identifications
of both endpoints of a Yoneda extension: it is
$\operatorname{Aut}(A)\times\operatorname{Aut}(C)$ acting on
$\operatorname{Ext}^1(C,A)$.

The second neighboring theory classifies subgroup embeddings and counts
extensions from their partition data.  Hall's algebra of partitions and
Green's finite general linear group theory initiated the Hall-polynomial
framework \cite{Hall1959AlgebraPartitions,Green1955Characters}, while Klein
connected extensions of finite $p$-modules with Schur functions and Hall
polynomials \cite{Klein1968SchurExtensions,Klein1969HallPolynomial}; see also
Macdonald's systematic account \cite{Macdonald1995SymmetricFunctions}.
Littlewood--Richardson sequences encode existence and degeneration data for
subgroup embeddings \cite{Schmidmeier2007LRSequences,Schmidmeier2011LRTableau,
KosakowskaSchmidmeier2017Box}.  The representation theory of bounded
submodules and invariant subspaces shows how intricate the unrestricted
embedding category can become
\cite{Schmidmeier2005Bounded,RingelSchmidmeier2008Invariant}, and automorphisms
of short exact sequences recover Hall-polynomial information
\cite{Schmidmeier2012HallAutomorphisms}.  Recent work treats fixed elementary
subgroups or quotients over discrete valuation rings
\cite{KosakowskaSchmidmeierSchreiner2025Elementary}.

Primary-component detection for finitely generated torsion modules was
previously obtained in the localization framework of
\cite{Gokavarapu2026Localization}.  Homocyclic primary groups also appear
in the classification of Utumi abelian groups
\cite{CalugareanuDas2023Utumi}.  Here the primary reduction is an input,
not the endpoint.  We first distinguish the invariant-factor loci for $C4$
and $C4^{\ast}$.  We then solve the orbit problem for every extension between
homocyclic primary $C4$-modules, not only for cyclic or semisimple end terms.
Thus the contribution is not a substitute for Hall--Green--Klein theory:
for fixed homocyclic endpoints it gives a closed Smith-form classification
of endpoint-automorphism orbits and then identifies exactly which orbits
preserve $C4$ or $C4^{\ast}$, information not determined by the three
endpoint partitions alone.

The formal asymmetry is only the first layer.  The deeper problem is to
recognize, from computable invariants of an extension, exactly when the
middle term crosses the $C4$ and $C4^{\ast}$ boundaries.  We solve the
classification problem for finite-length torsion modules over a Dedekind
domain and the full extension-orbit problem for homocyclic modules over its
local discrete valuation rings.  If
\[
A=(V/(\pi^a))^r,\qquad C=(V/(\pi^c))^s,\qquad
m=\min\{a,c\},
\]
then an extension class is an $r\times s$ matrix over the chain ring
$V/(\pi^m)$.  Endpoint automorphisms act by invertible row and column
operations.  Smith reduction therefore produces a complete orbit invariant,
but the invariant is used here in a second, less formal way: each Smith entry
determines two elementary divisors of the middle term.  This converts the
orbit problem into an exact $C4/C4^{\ast}$ phase diagram.

To expose the information carried by a Smith form without choosing bases, we
introduce the valuation--rank profile
\[
\rho_h(X)=
\dim_{V/(\pi)}\bigl(\operatorname{Im}(X\bmod\pi^h)/
\pi\operatorname{Im}(X\bmod\pi^h)\bigr),\qquad 1\leq h\leq m.
\]
The profile records the number of Smith exponents below every level $h$ and
hence is a complete orbit invariant.  Its first level is ordinary matrix rank
when $m=1$, while in the scalar case its unique jump is the valuation.  Thus
the previously separate rank and valuation pictures are boundary shadows of
one filtered invariant.

The principal results are as follows.

\begin{enumerate}[label=\textup{(\arabic*)}]
\item The $C4^{\ast}$ class is hereditary.  Consequently, the kernel of
every homomorphism whose domain is $C4^{\ast}$ is again $C4^{\ast}$
(Theorem~\ref{thm:kernel}).  No hypothesis on the codomain or on an exact
structure is required.
\item The class is not closed under finite direct sums, hence not under
split extensions.  The split sequence with middle term $k[t]^2$ gives the
counterexample, and the same example works for strongly $C4^{\ast}$
(Theorem~\ref{thm:split-failure}).
\item The class is not closed under quotients or cokernels.  For
$D=k[t,x,y]$ and $I=(x,y)^2$, the modules $I$ and $D$ are strongly
$C4^{\ast}$, but $D/I$ is not $C4^{\ast}$
(Theorem~\ref{thm:quotient-failure}).
\item A finite decomposition is called submodule-separating when every
submodule decomposes along its components.  Under this independently
checkable condition, both $C4^{\ast}$ and strongly $C4^{\ast}$ are
componentwise properties (Theorems~\ref{thm:separating-c4star} and
\ref{thm:separating-strong}).  Central idempotents make the condition
automatic, yielding factorwise detection over finite product rings
(Theorem~\ref{thm:product}).
\item Every finite-length module is semi-weak-CS.  Over a Dedekind domain,
the primary invariant factors then give a complete classification:
$C4$ is equivalent to homocyclicity of every primary component, whereas
$C4^{\ast}$ and strongly $C4^{\ast}$ are equivalent to every primary
component being cyclic or semisimple
(Theorem~\ref{thm:pid-classification}).
\item For homocyclic end terms over a discrete valuation ring,
$\Ext^1$ is a matrix space over a truncated valuation ring.  The endpoint
automorphism orbits are classified by Smith profiles, equivalently by the
valuation--rank profile (Theorem~\ref{thm:smith-orbits}).  An explicit direct
sum formula gives the middle term of every orbit
(Theorem~\ref{thm:smith-middle}).
\item The middle term is $C4$ precisely on two sharply defined orbits: the
zero orbit when the endpoint exponents agree, and the invertible square orbit
when the endpoint ranks agree.  It is $C4^{\ast}$ precisely on the
semisimple split orbit or the cyclic unit orbit.  These statements constitute
a complete local phase diagram for extensions between all finite-length
primary $C4$-modules (Theorem~\ref{thm:complete-local-phase}).
\item The preservation loci are therefore exactly one of
$\varnothing$, $\{0\}$, a general linear group, or the union of the last two;
their additive behavior is classified without exceptions
(Corollary~\ref{cor:locus-geometry}).  Primary decomposition globalizes the
orbit criteria to every extension between finite-length torsion
$C4^{\ast}$-modules over a Dedekind domain
(Theorem~\ref{thm:global-phase}).
\end{enumerate}

The arguments combine direct-summand calculus, primary decomposition, Baer's
criterion, the natural automorphism action on $\Ext^1$, and elementary-divisor
theory over principal ideal rings.  We use the latter in the classical sense
of Kaplansky \cite{Kaplansky1949Elementary}; standard module-theoretic and
homological background is taken from
\cite{AndersonFuller1992RingsCategories,Lam1999Lectures,
Wisbauer1991Foundations,Rotman2009Homological,Weibel1994HomologicalAlgebra}.

\section{Definitions and hereditary consequences}

Throughout, $R$ is an associative ring with identity and modules are
unitary right $R$-modules.  We write $N\summand M$ when $N$ is a direct
summand of $M$, and $N\ess M$ when $N$ is essential in $M$.  Standard facts
about kernels, quotients, composition length and exact sequences are used
as in \cite{Rotman2009Homological,Weibel1994HomologicalAlgebra}.

\begin{definition}[\cite{DingIbrahimYousifZhou2017C4}]
An $R$-module $M$ is a \emph{$C4$-module} if, whenever
$M=A\oplus B$ and $f:A\to B$ is an $R$-homomorphism with
$\ker f\summand A$, one has $\im f\summand B$.
\end{definition}

The kernel formulation is important.  Replacing ``$\im f$ is a direct
summand'' by ``$\im f$ is isomorphic to a direct summand'' produces a
different statement and will not be used here.

\begin{definition}[\cite{IbrahimEidElGuindy2026OnC4}]
An $R$-module $M$ is a \emph{$C4^{\ast}$-module} if every submodule of $M$
is a $C4$-module.
\end{definition}

We recall the strong condition in enough detail to make the later arguments
self-contained.  Let $\mathscr S(M)$ consist of triples $(X,Y,\alpha)$ such
that $X,Y\summand M$ are semisimple, $X\cap Y=0$, and
$\alpha:X\to Y$ is an isomorphism.  Put
$(X,Y,\alpha)\leq(X',Y',\alpha')$ when $X\leq X'$, $Y\leq Y'$, and
$\alpha'|_X=\alpha$.

\begin{definition}[\cite{IbrahimEidElGuindy2026OnC4}]
The module $M$ is \emph{semi-weak-CS} if, for every chain
$\{(X_\lambda,Y_\lambda,\alpha_\lambda)\}_{\lambda\in\Lambda}$ in
$\mathscr S(M)$, the unions
\[
X=\bigcup_{\lambda}X_\lambda,
\qquad
Y=\bigcup_{\lambda}Y_\lambda
\]
admit direct summands $A,B\summand M$ with $X\ess A$ and $Y\ess B$.
It is \emph{strongly $C4^{\ast}$} if it is both $C4^{\ast}$ and
semi-weak-CS.
\end{definition}

The zero module is allowed as a direct summand.  In particular, the chain
containing only $(0,0,0)$ is realized by $A=B=0$.

\begin{lemma}\label{lem:invariance}
The properties $C4$, $C4^{\ast}$, semi-weak-CS and strongly
$C4^{\ast}$ are invariant under module isomorphism.
\end{lemma}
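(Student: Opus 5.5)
The plan is to transport every ingredient of the definitions along a fixed isomorphism $\varphi:M\to N$. Each of the four properties is phrased entirely through submodules, direct-sum decompositions, homomorphisms, kernels, images, semisimplicity, essentiality and unions of chains. All of these are preserved by $\varphi$ and by $\varphi^{-1}$. So it suffices to prove one direction for each property: if $M$ has the property then so does $N$. The reverse direction follows by applying the same argument to $\varphi^{-1}$.

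For $C4$ I will proceed as follows.
\begin{enumerate}[label=\textup{(\roman*)}]
\item Suppose $M$ is $C4$, and let $N=A'\oplus B'$ and $f':A'\to B'$ with $\ker f'\summand A'$.
\item Put $A=\varphi^{-1}(A')$ and $B=\varphi^{-1}(B')$, so that $M=A\oplus B$.
\item Put $f=\varphi^{-1}|_{B'}\circ f'\circ\varphi|_{A}:A\to B$.
\item Then $\ker f=\varphi^{-1}(\ker f')$. If $A'=\ker f'\oplus K'$, then $A=\varphi^{-1}(\ker f')\oplus\varphi^{-1}(K')$, so $\ker f$ is a summand of $A$.
\item Since $M$ is $C4$, $\im f\summand B$. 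Applying $\varphi$ gives $\im f'=\varphi(\im f)\summand B'$.
\end{enumerate}

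For $C4^{\ast}$ the argument reduces to the $C4$ case. Every submodule $N_0\leq N$ equals $\varphi(M_0)$ for $M_0=\varphi^{-1}(N_0)\leq M$, and $\varphi$ restricts to an isomorphism $M_0\cong N_0$. Since $M_0$ is $C4$, so is $N_0$.

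For semi-weak-CS, I will check that $\varphi$ induces an order isomorphism $\mathscr S(M)\to\mathscr S(N)$. It sends $(X,Y,\alpha)$ to $(\varphi X,\varphi Y,\varphi\alpha\varphi^{-1}|_{\varphi X})$. The conditions transfer as follows:
\begin{itemize}
\item semisimplicity, being a direct summand, and $X\cap Y=0$ are all preserved;
\item the transported map is an isomorphism whenever $\alpha$ is;
\item the restriction-compatibility condition $\alpha'|_X=\alpha$ transports verbatim.
\end{itemize}
Hence chains in $\mathscr S(N)$ are exactly the images of chains in $\mathscr S(M)$, and unions commute with $\varphi$. If $A,B\summand M$ satisfy $X\ess A$ and $Y\ess B$, then $\varphi A,\varphi B\summand N$ satisfy $\varphi X\ess\varphi A$ and $\varphi Y\ess\varphi B$. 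This holds because essentiality is a lattice-theoretic condition and $\varphi$ induces a lattice isomorphism of submodule lattices.

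Strongly $C4^{\ast}$ is the conjunction of $C4^{\ast}$ and semi-weak-CS, so it follows at once. There is no real obstacle. The only step needing care is the semi-weak-CS case, where one must check that the ordering on $\mathscr S$, in particular the compatibility $\alpha'|_X=\alpha$, is respected by conjugation with $\varphi$, so that chains correspond to chains.
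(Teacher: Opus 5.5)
Your proposal is correct and takes the same approach as the paper: you fix an isomorphism and transport along it the decompositions, kernels, images, the submodule lattice, the triples in $\mathscr S(M)$ with their ordering, and the essential extensions, then apply the same argument to $\varphi^{-1}$ for the converse. The only difference is that you write out in detail what the paper's short proof states in a few lines, notably the conjugated maps $\varphi\alpha\varphi^{-1}$ and the fact that the order on $\mathscr S$ is preserved.
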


\begin{proof}
An isomorphism transports direct-sum decompositions, kernels, images and
splittings.  It also gives an inclusion-preserving bijection on submodules.
For the semi-weak-CS condition it transports semisimple direct summands,
chains in $\mathscr S(M)$ and essential embeddings.  Each definition is
therefore preserved and reflected.
\end{proof}

\begin{proposition}[Hereditary principle]\label{prop:hereditary}
If $M$ is $C4^{\ast}$ and $N\leq M$, then $N$ is $C4^{\ast}$.
Consequently, direct summands and submodules of $C4^{\ast}$-modules are
$C4^{\ast}$.
\end{proposition}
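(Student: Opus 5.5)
The plan is to argue straight from the definition of $C4^{\ast}$, using only transitivity of the submodule relation. Let $M$ be $C4^{\ast}$ and $N\leq M$. To show that $N$ is $C4^{\ast}$ we must show that every submodule $L\leq N$ is a $C4$-module. Since $L\leq N\leq M$, $L$ is itself a submodule of $M$, so $L$ is $C4$ by the hypothesis on $M$. As $L$ was arbitrary, $N$ is $C4^{\ast}$.

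For the second sentence, a direct summand of $M$ is in particular a submodule, so it is covered by the first part. If the statement is read up to isomorphism, for instance for a module isomorphic to a summand or a submodule, we appeal to Lemma~\ref{lem:invariance}: an isomorphism $N'\cong N$ carries the submodules of $N'$ bijectively onto those of $N$ and preserves the $C4$ property. Hence $N'$ is $C4^{\ast}$ whenever $N$ is.

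Nothing here needs any structure on the ring or on the decompositions involved. The only point that needs care is noting that the $C4$ condition is tested on each submodule $L$ with respect to its \emph{own} decompositions $L=A\oplus B$. The ambient module $M$ plays no role beyond guaranteeing that $L$ is $C4$. So no compatibility between summands of $L$, $N$ and $M$ is required, and the argument is purely set-theoretic on the submodule lattice.
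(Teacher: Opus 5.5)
Your proof is correct and matches the paper's argument exactly: any $L\leq N$ is a submodule of $M$ and hence $C4$, and direct summands are a special case of submodules. The extra remark about isomorphic copies via Lemma~\ref{lem:invariance} is harmless but not needed for the statement as written.
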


\begin{proof}
Let $L\leq N$.  Then $L\leq M$, so $L$ is $C4$ by the definition of
$C4^{\ast}$ for $M$.  This holds for every $L\leq N$.
\end{proof}

This one-line proof is the reason that kernel stability requires no transfer
package.

The following obstruction, proved in the foundational paper in an
equivalent form, is the rigidity input for the arithmetic classification.

\begin{proposition}[Disjoint-embedding obstruction]
\label{prop:embedding-obstruction}
Let $M$ be $C4^{\ast}$ and let $X,Y\leq M$ satisfy $X\cap Y=0$.
If there is a monomorphism $u:X\to Y$, then $X$ is semisimple.  In
particular, a $C4^{\ast}$-module cannot contain two disjoint isomorphic
nonsimple submodules.
\end{proposition}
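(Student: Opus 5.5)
The plan is to reduce the statement to a single application of the $C4$ condition inside the submodule $N=X\oplus Y$ of $M$. Since $X\cap Y=0$, the sum $X+Y$ is internal direct. By Proposition~\ref{prop:hereditary}, or simply by the definition of $C4^{\ast}$, the module $N$ is $C4$. Fix an arbitrary submodule $S\leq X$. The goal is to show that $S$ is a direct summand of $X$; this forces $X$ to be semisimple, since every submodule will then be a summand.

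The key device is to shrink the ambient module so that the kernel hypothesis in the $C4$ definition becomes trivial. Pass to the submodule
\[
N_S=S\oplus u(S)\leq N .
\]
This is again $C4$, because it is a submodule of $M$. Consider the map $u|_S:S\to u(S)$. It is injective, so $\ker(u|_S)=0\summand S$. Its image is all of $u(S)$, which is trivially a summand, so no contradiction arises here. The informative choice is therefore a different decomposition of $N$ itself: take the homomorphism $f=u|_{S}$ composed into $Y$ but defined on a summand. The trouble is that $S$ need not be a summand of $X$, so I will use the kernel instead. Define
\[
g:X\to Y\oplus (X/S)?
\]
That map does not live inside a decomposition of $N$, so I will instead use the standard trick for $C4$. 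Inside $N=X\oplus Y$, the graph $X'=\{x+u(x):x\in X\}$ is a complement of $Y$, so $N=X'\oplus Y$. Now compose $u$ with a suitable endomorphism to get a map $h:X'\to Y$ with kernel a summand and image $u(S)$. The first attempt is
\[
h(x+u(x))=u(e(x)),
\]
where $e$ is an idempotent. This requires $S$ to be a summand, which is circular.

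So the reduction I actually commit to is to show that every cyclic submodule $xR$ of $X$ is semisimple, and then conclude that $X$ is semisimple by applying the same argument to socle considerations. For cyclic $xR\leq X$, the submodule $xR\oplus u(x)R\cong xR\oplus xR$ sits in $M$ and is $C4$. The foundational fact I expect to need is the following: if $Z\oplus Z$ is $C4$ for a module $Z$, then every monomorphism $Z\to Z$ has summand image. Here I would use the diagonal decomposition, and from it derive that every submodule of $Z$ of the form $\phi(Z)$ is a summand. Applying this to all submodules of $xR$ via hereditary passage to $K\oplus u(K)$ for $K\leq xR$, the natural copies $K\hookrightarrow xR$ composed with $u$ give images that must be summands of $u(xR)$. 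Transporting back by $u^{-1}$, every $K$ is a summand of $xR$. Hence $xR$ is semisimple, and so is $X$, being a sum of semisimple cyclics.

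The main obstacle is exactly this transfer step: arranging a decomposition $N'=P\oplus Q$ of some submodule $N'\leq M$ with a map $P\to Q$ whose kernel is visibly a summand and whose image is $u(K)\leq u(xR)=Q$. I would try $N'=K\oplus u(xR)$, so that $P=K$, $Q=u(xR)$, and $f=u|_K$ is injective with kernel $0$. The $C4$ property of $N'$ then gives $u(K)\summand u(xR)$, and hence $K\summand xR$ after applying $u^{-1}$ on $u(xR)$. This is legitimate because $K\cap u(xR)\subseteq X\cap Y=0$. In fact this argument works directly with $X$ in place of $xR$: take $N'=S\oplus u(X)$ and $f=u|_S$. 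Then $u(S)\summand u(X)$, so $S\summand X$ for every $S\leq X$, and $X$ is semisimple. The final claim follows, since two disjoint isomorphic submodules give such a monomorphism $u$, which forces semisimplicity.
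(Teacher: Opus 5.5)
Your final argument is correct, and it is shorter than the paper's. Since $S\cap u(X)\subseteq X\cap Y=0$, the module $S\oplus u(X)$ is a submodule of $M$ and hence $C4$. Testing the monomorphism $u|_S:S\to u(X)$, whose kernel $0$ is a summand of $S$, gives $u(S)\summand u(X)$. Transporting along the isomorphism $u:X\to u(X)$ then gives $S\summand X$.

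The paper starts from the same idea but uses the larger complement $Y$. It tests $u|_K$ inside $K\oplus Y$ to get $u(K)\summand Y$. It then needs a second $C4$ test, applied to $u^{-1}:u(K)\to X\oplus Y_0$ inside $X\oplus Y=u(K)\oplus(X\oplus Y_0)$, followed by restricting a retraction, to get back from $u(K)$ to $K$.

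Your choice of $u(X)$ as the codomain summand replaces that second $C4$ test with the triviality that an isomorphism carries summands to summands. So you use the hypothesis only once, and only on submodules of $X\oplus u(X)$. The paper's extra step buys nothing essential; it could itself be bypassed by restricting a retraction of $Y$ onto $u(K)$ to $u(X)$.

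In a write-up, delete the abandoned attempts: the module $S\oplus u(S)$, the graph decomposition, and the reduction to cyclic submodules. The last few lines are the entire proof.

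One caveat applies equally to you and to the paper. What is actually proved is that two disjoint isomorphic submodules must be semisimple, so the closing clause should say ``non-semisimple''. With ``nonsimple'' read literally it fails. For example, $k^2\oplus k^2$ over a field $k$ is semisimple and therefore $C4^{\ast}$, yet it has two disjoint isomorphic nonsimple submodules.
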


\begin{proof}
This is Proposition~3.4 of \cite{IbrahimEidElGuindy2026OnC4}; we include a
proof from the defining $C4$ test.  Let $K\leq X$.  The submodule
$K\oplus Y$ is $C4$, and the monomorphism $u|_K:K\to Y$ has zero kernel.
Consequently $u(K)\summand Y$; write $Y=u(K)\oplus Y_0$.  The submodule
\[
X\oplus Y=u(K)\oplus(X\oplus Y_0)
\]
is also $C4$.  Apply its $C4$ condition to
$u^{-1}:u(K)\to X\oplus Y_0$.  The image is $K$ and the kernel is zero, so
$K\summand X\oplus Y_0$.  Restricting a retraction onto $K$ to $X$ gives
$K\summand X$.  Since every submodule $K$ of $X$ is a direct summand,
$X$ is semisimple.  The last assertion follows by taking an isomorphism
$X\to Y$.
\end{proof}

\begin{proposition}\label{prop:socle-zero}
If $\Soc(M)=0$, then $M$ is semi-weak-CS.  Hence a zero-socle
$C4^{\ast}$-module is strongly $C4^{\ast}$.
\end{proposition}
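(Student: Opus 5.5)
The plan is to show that when $\Soc(M)=0$ the set $\mathscr S(M)$ contains only the trivial triple $(0,0,0)$; the semi-weak-CS condition then reduces to the degenerate chain already handled after the definition.

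Take $(X,Y,\alpha)\in\mathscr S(M)$. By definition $X$ is semisimple, so $X$ is the sum of its simple submodules. Each simple submodule of $X$ is a simple submodule of $M$ and therefore lies in $\Soc(M)=0$. Hence $X=0$, and likewise $Y=0$; alternatively, $Y\cong X$ via $\alpha$. Then $\alpha$ is the zero map. So every chain in $\mathscr S(M)$ is the one-element chain $\{(0,0,0)\}$, possibly indexed by several $\lambda$.

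For such a chain the unions are $X=Y=0$. Choose $A=B=0$, which are direct summands of $M$. The conditions $0\ess 0$ hold trivially, since the only nonzero submodule condition is vacuous in the zero module. Therefore $M$ is semi-weak-CS.

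The second sentence follows directly from the definition: strongly $C4^{\ast}$ means $C4^{\ast}$ together with semi-weak-CS. A zero-socle $C4^{\ast}$-module therefore qualifies by the first part. Nothing here is really an obstacle. The only points needing care are the conventions that zero is a semisimple direct summand and that $0\ess 0$, and the remark following the definition already records these.
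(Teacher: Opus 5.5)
Your proof is correct and follows essentially the same route as the paper. The paper likewise observes that a nonzero semisimple submodule would contain a simple submodule lying in $\Soc(M)=0$, so every triple in $\mathscr S(M)$ is $(0,0,0)$ and the semi-weak-CS condition is witnessed by $A=B=0$.
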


\begin{proof}
A nonzero semisimple submodule contains a simple submodule and therefore
meets $\Soc(M)$ nontrivially.  Thus $M$ has no nonzero semisimple direct
summand.  Every triple in $\mathscr S(M)$ is $(0,0,0)$, and the defining
condition holds with $A=B=0$.
\end{proof}

\begin{lemma}\label{lem:domain}
Let $D$ be a commutative domain.  Then $D_D$ is $C4^{\ast}$.  If $D$ is
not a field, then $D_D$ is strongly $C4^{\ast}$.
\end{lemma}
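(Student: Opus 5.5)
The key observation is that over a commutative domain $D$ every nonzero submodule $L\leq D_D$ is uniform. Any two nonzero ideals $I,J$ satisfy $0\neq IJ\subseteq I\cap J$. Hence $L$ admits no nontrivial direct-sum decomposition. To verify that $L$ is $C4$, I will therefore run through the decompositions $L=A\oplus B$, which are only $L=L\oplus 0$ and $L=0\oplus L$.

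In the first case $B=0$, so every $f:A\to B$ is zero and $\im f=0\summand B$. In the second case $A=0$, so again $f=0$ and $\im f=0\summand B$. When $L=0$ the condition holds trivially. Thus every submodule of $D_D$ is $C4$, and $D_D$ is $C4^{\ast}$.

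For the strong statement I will apply Proposition~\ref{prop:socle-zero}, which reduces the claim to showing $\Soc(D_D)=0$ when $D$ is not a field. Suppose $S\leq D$ is a simple ideal and pick $0\neq s\in S$. Then $sD=S$, and $s^2D$ is a nonzero submodule of $S$ because $D$ is a domain. Simplicity forces $s^2D=sD$, so $s=s^2d$ for some $d\in D$. Cancelling $s$ gives $1=sd$, so $s$ is a unit and $S=D$.

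It follows that $D$ is itself simple as a module, i.e. $D$ has no proper nonzero ideals. Hence $D$ is a field, contradicting the hypothesis. So $\Soc(D_D)=0$, and Proposition~\ref{prop:socle-zero} yields that $D_D$ is strongly $C4^{\ast}$.

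I do not expect a genuine obstacle. The only point needing care is that indecomposability must hold for every submodule and not just for $D$ itself. The uniformity argument $IJ\subseteq I\cap J$ supplies this uniformly.
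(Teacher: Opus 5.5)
Your proof is correct and follows the same two-step structure as the paper's proof: every nonzero ideal of $D$ is indecomposable and hence trivially $C4$, and when $D$ is not a field there are no simple ideals, so $\Soc(D_D)=0$ and Proposition~\ref{prop:socle-zero} applies. The differences are only in the sub-steps: you get indecomposability from uniformity ($0\neq IJ\subseteq I\cap J$) rather than from the paper's rank count over the fraction field, and you exclude simple ideals by showing that $s^2D=sD$ forces $s$ to be a unit rather than via the isomorphism $sD\cong D_D$; both routes are elementary and valid.
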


\begin{proof}
Every nonzero ideal $J$ of $D$ is torsion-free of rank one.  If
$J=J_1\oplus J_2$ with $J_1,J_2\neq0$, tensoring with the fraction field
would give
$1=\operatorname{rank}J=\operatorname{rank}J_1+
\operatorname{rank}J_2\geq2$, a contradiction.  Hence every nonzero
submodule of $D_D$ is indecomposable.  An indecomposable module is $C4$:
its only decompositions have a zero component.  Thus every ideal of $D$ is
$C4$, proving that $D_D$ is $C4^{\ast}$.

If $D$ is not a field, it has no nonzero simple ideal.  Indeed, multiplication
identifies every nonzero principal ideal with $D_D$, so a simple nonzero
ideal would force $D_D$ itself to be simple and $D$ to be a field.  Hence
$\Soc(D_D)=0$, and Proposition~\ref{prop:socle-zero} applies.
\end{proof}

\section{The exact-sequence asymmetry}

We now settle the three elementary closure directions.  The positive kernel
statement is formal; the negative extension and quotient statements require
explicit modules.

\begin{theorem}[Unconditional kernel stability]\label{thm:kernel}
Let $f:M\to N$ be any homomorphism.  If $M$ is $C4^{\ast}$, then
$\ker f$ is $C4^{\ast}$.  Equivalently, in every short exact sequence
\[
0\longrightarrow A\longrightarrow B\longrightarrow C\longrightarrow0,
\]
the implication
\[
B\text{ is }C4^{\ast}\quad\Longrightarrow\quad
A\text{ is }C4^{\ast}
\]
holds without a condition on $C$.
\end{theorem}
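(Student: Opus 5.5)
The argument reduces directly to the hereditary principle, Proposition~\ref{prop:hereditary}. For the first assertion, let $f:M\to N$ be a homomorphism with $M$ a $C4^{\ast}$-module. The kernel $\ker f$ is a submodule of $M$, so Proposition~\ref{prop:hereditary} applies at once and gives that $\ker f$ is $C4^{\ast}$. Concretely, any submodule $L\leq\ker f$ is also a submodule of $M$, hence $C4$ by the definition of $C4^{\ast}$ for $M$. No property of $N$ and no exactness on the right is used.

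For the equivalent formulation, take a short exact sequence $0\to A\xrightarrow{\iota} B\xrightarrow{p} C\to 0$ with $B$ a $C4^{\ast}$-module. Exactness at $B$ gives $\iota(A)=\ker p$, so by the first part $\iota(A)$ is $C4^{\ast}$. Since $\iota$ is injective, it is an isomorphism $A\cong\iota(A)$. Lemma~\ref{lem:invariance} then transfers the $C4^{\ast}$ property to $A$.

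The two formulations are equivalent in the following sense. Every kernel $\ker f$ arises as the left term of the short exact sequence $0\to\ker f\to M\to\im f\to 0$. Conversely, every left term of a short exact sequence is isomorphic to a kernel, as shown above. Only this identification needs any care, namely transporting the property across the isomorphism $A\cong\iota(A)$, and Lemma~\ref{lem:invariance} covers it. No step is a genuine obstacle.
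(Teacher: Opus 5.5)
Your proposal is correct and matches the paper's proof: the kernel is a submodule of $M$, so Proposition~\ref{prop:hereditary} applies. Your extra step, which transports the property along $A\cong\iota(A)$ using Lemma~\ref{lem:invariance}, only spells out what the paper leaves implicit in the word ``equivalently''.
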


\begin{proof}
The kernel is a submodule of $M$.  Apply
Proposition~\ref{prop:hereditary}.
\end{proof}

\begin{corollary}\label{cor:strong-kernel}
If $M$ is $C4^{\ast}$ with $\Soc(M)=0$, then the kernel of every
homomorphism out of $M$ is strongly $C4^{\ast}$.
\end{corollary}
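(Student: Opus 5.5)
The argument combines Theorem~\ref{thm:kernel} with Proposition~\ref{prop:socle-zero}; the only extra input is the monotonicity of the socle under passage to submodules.

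First, let $f:M\to N$ be any homomorphism and put $K=\ker f$. By Theorem~\ref{thm:kernel} (equivalently, Proposition~\ref{prop:hereditary}), $K$ is $C4^{\ast}$, since $K$ is a submodule of the $C4^{\ast}$-module $M$.

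Second, show that $\Soc(K)=0$. Any simple submodule $S\leq K$ is also a simple submodule of $M$, so $\Soc(K)\subseteq\Soc(M)\cap K$. In fact $\Soc(K)=\Soc(M)\cap K$ for every submodule $K$, but only the inclusion is needed. Since $\Soc(M)=0$, we get $\Soc(K)=0$.

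Third, apply Proposition~\ref{prop:socle-zero} to $K$. Because $K$ has zero socle, it is semi-weak-CS. Combined with the first step, $K$ is both $C4^{\ast}$ and semi-weak-CS, which is the definition of strongly $C4^{\ast}$.

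No step is a genuine obstacle. The one point to get right is that the zero-socle hypothesis on $M$ passes to the submodule $K$, and this follows at once because simplicity of a submodule does not depend on the ambient module.
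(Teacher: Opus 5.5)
Your proposal is correct and follows the paper's proof: Theorem~\ref{thm:kernel} makes the kernel $K$ a $C4^{\ast}$-module, the inclusion $\Soc(K)\leq\Soc(M)=0$ gives $K$ zero socle, and Proposition~\ref{prop:socle-zero} then supplies the semi-weak-CS condition.
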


\begin{proof}
Let $K\leq M$ be a kernel.  Theorem~\ref{thm:kernel} gives that $K$ is
$C4^{\ast}$, while $\Soc(K)\leq\Soc(M)=0$.  Apply
Proposition~\ref{prop:socle-zero}.
\end{proof}

The converse direction for a split sequence is also immediate when the
middle term is already known to be $C4^{\ast}$.

\begin{proposition}\label{prop:split-quotient}
If $M$ is $C4^{\ast}$ and $N\summand M$, then $M/N$ is
$C4^{\ast}$.
\end{proposition}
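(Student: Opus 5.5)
The argument reduces the quotient to a submodule of $M$ and then applies the hereditary principle. Since $N\summand M$, we fix a complement $L$ with $M=N\oplus L$. The projection $p:M\to L$ along $N$ is surjective with kernel exactly $N$. The first isomorphism theorem then gives an isomorphism $M/N\cong L$. Concretely, the map $L\to M/N$, $\ell\mapsto\ell+N$, is injective because $L\cap N=0$, and surjective because $M=N+L$.

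Next, $L$ is a submodule of $M$, so Proposition~\ref{prop:hereditary} shows that $L$ is $C4^{\ast}$. Finally, Lemma~\ref{lem:invariance} says the $C4^{\ast}$ property is invariant under module isomorphism, so it transfers from $L$ to $M/N$.

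There is no genuine obstacle. The only point needing care is to apply isomorphism invariance to the whole $C4^{\ast}$ property, which Lemma~\ref{lem:invariance} covers, since the quotient $M/N$ is not literally a submodule of $M$. For the same reason the argument also gives the analogous statement for strongly $C4^{\ast}$, provided one also knows that semi-weak-CS passes to direct summands; I would not claim that here.
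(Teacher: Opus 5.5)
Your proof is correct and is essentially the paper's own argument: choose a complement $L$ with $M=N\oplus L$, identify $M/N\cong L$, and apply Proposition~\ref{prop:hereditary} to $L$. The only difference is that you cite isomorphism invariance (Lemma~\ref{lem:invariance}) explicitly, where the paper leaves that step implicit.
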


\begin{proof}
Choose $L$ with $M=N\oplus L$.  The quotient $M/N$ is isomorphic to
$L$, and $L$ is $C4^{\ast}$ by Proposition~\ref{prop:hereditary}.
\end{proof}

It is tempting to reverse Proposition~\ref{prop:split-quotient} and infer
that a split extension of two $C4^{\ast}$-modules is $C4^{\ast}$.  The
following elementary example rules this out.

\begin{theorem}[Failure under split extensions]\label{thm:split-failure}
Let $k$ be a field and $D=k[t]$.  In the split exact sequence
\[
0\longrightarrow D\xrightarrow{a\mapsto(a,0)}D^2
\xrightarrow{(a,b)\mapsto b}D\longrightarrow0,
\]
both end terms are strongly $C4^{\ast}$, whereas the middle term is not a
$C4$-module.  Consequently:
\begin{enumerate}[label=\textup{(\arabic*)}]
\item $C4^{\ast}$-modules are not closed under binary direct sums;
\item $C4^{\ast}$-modules are not closed under split extensions;
\item strongly $C4^{\ast}$-modules are not closed under binary direct sums
or split extensions.
\end{enumerate}
\end{theorem}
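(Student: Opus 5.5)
The end terms are handled by Lemma~\ref{lem:domain}. Since $D=k[t]$ is a commutative domain that is not a field, $D_D$ is strongly $C4^{\ast}$. Exactness and splitness of the displayed sequence are clear, because the inclusion $a\mapsto(a,0)$ has retraction $(a,b)\mapsto a$.

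The main step is to show that $M=D^2$ is not $C4$. I will use the decomposition $M=A\oplus B$ with $A=D\oplus 0$ and $B=0\oplus D$. As the test map I take the homomorphism $f:A\to B$, $(a,0)\mapsto(0,ta)$. Its kernel is zero, since $D$ is a domain, and $0\summand A$, so the hypothesis of the $C4$ condition holds. The image is $0\oplus tD$. I then need $tD\not\summand D$. By the argument in Lemma~\ref{lem:domain}, $D_D$ is indecomposable, so its only direct summands are $0$ and $D$. Here $tD\neq0$, and $tD\neq D$ because $t$ is not a unit in $k[t]$. Hence $\im f$ is not a direct summand of $B$, and $M$ fails $C4$.

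The consequences then follow directly. A module that is not $C4$ is not $C4^{\ast}$, because it is a submodule of itself. So $M$ is neither $C4^{\ast}$ nor strongly $C4^{\ast}$. It is nevertheless the direct sum of two strongly $C4^{\ast}$ modules, and it is the middle term of a split extension of them. This gives (1), (2) and (3) together, using that strongly $C4^{\ast}$ implies $C4^{\ast}$.

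I expect no step to be a real obstacle. The only point that needs care is the choice of $f$: it must have a direct-summand kernel while its image is not a summand. A non-unit scalar multiplication $D\to D$ does this, with injectivity coming from $D$ being a domain.
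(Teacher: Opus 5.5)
Your proposal is correct and matches the paper's proof: the same decomposition $D^2=De_1\oplus De_2$, the same injective test map $e_1\mapsto te_2$, and the same conclusion that $tD$ is a proper nonzero submodule of $D$ and hence not a summand. The differences are minor. You rule out $tD\summand D$ using the indecomposability of $D_D$ from Lemma~\ref{lem:domain}, while the paper uses the absence of nontrivial idempotents in $D$; these are two forms of the same fact. You also spell out consequences (1)--(3) explicitly, which the paper leaves implicit.
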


\begin{proof}
By Lemma~\ref{lem:domain}, $D_D$ is strongly $C4^{\ast}$.  Write
$D^2=A\oplus B$, where $A=De_1$ and $B=De_2$.  Define
\[
f:A\longrightarrow B,
\qquad f(e_1)=te_2.
\]
The map is injective, so $\ker f=0\summand A$.  Its image is
$tD e_2$.  Direct summands of the rank-one free $D$-module $B$ correspond
to idempotents of $D$.  Since the domain $D$ has only the idempotents $0$
and $1$, the proper nonzero submodule $tD e_2$ is not a direct summand of
$B$.  Thus $D^2$ fails the defining $C4$ condition.
\end{proof}

The quotient direction fails even when the kernel and the middle term are
strongly $C4^{\ast}$.

\begin{theorem}[Failure under quotients and cokernels]
\label{thm:quotient-failure}
Let $k$ be a field,
\[
D=k[t,x,y],\qquad I=(x,y)^2,
\qquad S=D/I.
\]
In the exact sequence
\[
0\longrightarrow I\longrightarrow D\longrightarrow S\longrightarrow0,
\]
the modules $I$ and $D$ are strongly $C4^{\ast}$, but $S$ is not
$C4^{\ast}$.  Hence the $C4^{\ast}$ and strongly $C4^{\ast}$ classes are
not closed under quotients or cokernels of monomorphisms.
\end{theorem}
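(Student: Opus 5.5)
The statement has two halves: $I$ and $D$ are strongly $C4^{\ast}$, and $S=D/I$ is not $C4^{\ast}$.

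For the first half, Lemma~\ref{lem:domain} applies directly to $D=k[t,x,y]$. It is a commutative domain that is not a field, so $D_D$ is strongly $C4^{\ast}$. The ideal $I$ is a submodule of $D$, so Proposition~\ref{prop:hereditary} makes it $C4^{\ast}$. Moreover $\Soc(I)\leq\Soc(D)=0$, so Proposition~\ref{prop:socle-zero} shows $I$ is semi-weak-CS, hence strongly $C4^{\ast}$. Corollary~\ref{cor:strong-kernel} packages this directly, since $I$ is the kernel of $D\to S$.

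For the second half, I will use the disjoint-embedding obstruction, Proposition~\ref{prop:embedding-obstruction}. It suffices to find submodules $X,Y\leq S$ with $X\cap Y=0$, $X\cong Y$, and $X$ not semisimple. Let $\bar x,\bar y$ denote the images of $x,y$ in $S$, and take $X=\bar xS=xD/(xD\cap I)$ and $Y=\bar yS$.

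First I compute the annihilators. Since $xD\cap I=x(x,y)$, we get $X\cong D/(x,y)\cong k[t]$, and symmetrically $Y\cong k[t]$. So the two submodules are isomorphic, and because $k[t]$ is not a field, $X$ is not semisimple; indeed $S$-submodules of $X$ correspond to ideals of $k[t]$.

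Next I check disjointness. An element of $X\cap Y$ has the form $xf\equiv yg \pmod I$. Reduce $f$ and $g$ modulo $(x,y)$ to $f_0(t)$ and $g_0(t)$. Working in the graded piece of degree one in $x,y$ of $D/I$, which is free over $k[t]$ on $\bar x,\bar y$, we get $xf_0=yg_0$, hence $f_0=g_0=0$. Therefore $xf\in I$, and $X\cap Y=0$.

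The obstruction then shows that $S$ is not $C4^{\ast}$. Since $S$ is not $C4^{\ast}$, it is not strongly $C4^{\ast}$ either, which gives the non-closure conclusions for quotients and cokernels. The main step needing care is the disjointness and annihilator computation. The key observation there is that $D/I=k[t]\oplus k[t]\bar x\oplus k[t]\bar y$ as a $k[t]$-module, with $x$ and $y$ killing the last two summands, and this makes both computations transparent.
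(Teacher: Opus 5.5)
Your proof is correct, and your first half matches the paper's. For the second half you take a slightly different route. You feed $X=S\bar x$ and $Y=S\bar y$ into the disjoint-embedding obstruction (Proposition~\ref{prop:embedding-obstruction}). The paper never invokes that proposition; instead it exhibits an explicit failed $C4$ test on the submodule $J=S\bar x\oplus S\bar y$. That test uses $g(\bar x)=t\bar y$, which is injective but whose image $tS\bar y$ is not a summand of $S\bar y\cong k[t]$.

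Both arguments rest on the same structural facts: $S\bar x\cap S\bar y=0$, and each is a copy of $k[t]$ on which $\bar x,\bar y$ act trivially. The paper simply asserts these (``$J$ is free over $k[t]$ with basis $\bar x,\bar y$''). Your $(x,y)$-degree computation, including $xD\cap I=x(x,y)$, actually verifies them.

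What each approach buys:
\begin{itemize}
\item The paper's version gives a concrete witness, namely the submodule $J$ itself failing $C4$, with no further lemma needed.
\item Yours is more economical conceptually. It ties the counterexample to the same obstruction that drives the $r,n\geq2$ case of Theorem~\ref{thm:pid-classification}.
\end{itemize}
The two are close: unwinding the proof of Proposition~\ref{prop:embedding-obstruction} with $K=tX$ recovers essentially the paper's test, carried out on $tX\oplus Y$ instead of $X\oplus Y$.
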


\begin{proof}
The polynomial ring $D$ is a nonfield domain, so $D_D$ is strongly
$C4^{\ast}$ by Lemma~\ref{lem:domain}.  The ideal $I$ is $C4^{\ast}$ by
Proposition~\ref{prop:hereditary}.  Moreover,
$\Soc(I)\leq\Soc(D)=0$, so $I$ is strongly $C4^{\ast}$ by
Proposition~\ref{prop:socle-zero}.

Let bars denote residue classes in $S$ and put
\[
J=(x,y)/I=S\bar x\oplus S\bar y.
\]
As a $k[t]$-module, $J$ is free with basis $\bar x,\bar y$; the elements
$\bar x$ and $\bar y$ act trivially on $J$.  Define the $S$-homomorphism
\[
g:S\bar x\longrightarrow S\bar y,
\qquad g(\bar x)=t\bar y.
\]
Under the identifications $S\bar x\cong k[t]\cong S\bar y$, this is
multiplication by $t$.  It is injective, so its kernel is a direct summand,
but its image $tS\bar y$ is a proper nonzero submodule of the rank-one
$k[t]$-module $S\bar y$ and is not a direct summand.  Therefore $J$ is not
$C4$.  Since $J\leq S$, the module $S$ is not $C4^{\ast}$.
\end{proof}

\begin{remark}\label{rem:closure-table}
Theorems~\ref{thm:kernel}, \ref{thm:split-failure} and
\ref{thm:quotient-failure} give the complete elementary closure table:
kernel closure holds unconditionally; extension closure fails even for
split sequences; and cokernel closure fails for an inclusion between
strongly $C4^{\ast}$-modules.  Thus the three directions cannot be packaged
as a symmetric ``two-out-of-three'' principle.
\end{remark}

\section{Submodule-separating decompositions}

The split counterexample fails because $D^2$ has mixed submodules and mixed
homomorphisms between its two coordinates.  We now isolate a condition that
excludes precisely this phenomenon.

\begin{definition}
A finite decomposition
\[
M=M_1\oplus\cdots\oplus M_n
\]
is \emph{submodule-separating} if every submodule $N\leq M$ satisfies
\[
N=(N\cap M_1)\oplus\cdots\oplus(N\cap M_n).
\tag{4.1}\label{eq:separation}
\]
\end{definition}

The condition is imposed on submodules, not on a chosen family of test
maps.  It can therefore be verified independently of the $C4$ conclusion.

\begin{remark}\label{rem:separation-restrictive}
Submodule separation is a restrictive hypothesis.  It is not a generic
property of direct sums: Lemma~\ref{lem:cross-hom} shows that it forces every
homomorphism between submodules of distinct components to vanish.  Its main
natural source here is a decomposition induced by finitely many orthogonal
central idempotents, as treated in Section~5.  This strength is precisely what
excludes the mixed-submodule obstruction used in Theorem~\ref{thm:split-failure}.
\end{remark}

\begin{lemma}\label{lem:cross-hom}
If $M=\bigoplus_{i=1}^n M_i$ is submodule-separating, then
\[
\Hom_R(X_i,X_j)=0
\]
for all $i\neq j$ and all submodules $X_i\leq M_i$, $X_j\leq M_j$.
Every homomorphism between submodules of $M$ is consequently componentwise.
\end{lemma}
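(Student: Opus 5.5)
The plan is to use the graph of a homomorphism as a test submodule. Fix $i\neq j$, submodules $X_i\leq M_i$ and $X_j\leq M_j$, and a homomorphism $h:X_i\to X_j$. Consider
\[
N=\{x+h(x): x\in X_i\}\leq X_i\oplus X_j\leq M .
\]
By the separation identity \eqref{eq:separation}, $N=(N\cap M_1)\oplus\cdots\oplus(N\cap M_n)$. Hence for each $x\in X_i$ the element $x+h(x)\in N$ decomposes inside $N$ along the components.

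Its $M_i$-component is $x$ and its $M_j$-component is $h(x)$, since $x\in M_i$ and $h(x)\in M_j$. The decomposition of an element of $M$ along the $M_k$ is unique, and the splitting of $N$ is compatible with it. It follows that $x\in N\cap M_i$ and $h(x)\in N\cap M_j$. Now an element $x'+h(x')$ of $N$ lying in $M_j$ must have $x'=0$, because its $M_i$-component is $x'$. Therefore $N\cap M_j=0$, which gives $h(x)=0$. Since $x$ was arbitrary, $h=0$ and $\Hom_R(X_i,X_j)=0$.

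For the consequence, let $g:P\to Q$ be a homomorphism between submodules $P,Q\leq M$. Apply \eqref{eq:separation} to both, writing $P=\bigoplus_i P_i$ with $P_i=P\cap M_i$ and $Q=\bigoplus_j Q_j$ with $Q_j=Q\cap M_j$. Compose $g|_{P_i}$ with the projection $Q\to Q_j$ given by this decomposition of $Q$. For $j\neq i$ this composite is a map from a submodule of $M_i$ to a submodule of $M_j$, so it vanishes by the first part. Hence $g(P_i)\subseteq Q_i$, and $g=\bigoplus_i g|_{P_i}$ is componentwise.

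I do not expect a real obstacle. The only point needing care is the uniqueness bookkeeping showing $N\cap M_j=0$: membership of $x+h(x)$ in $N$ must force its components into $N$, and this rests precisely on the separation identity holding for the particular submodule $N$.
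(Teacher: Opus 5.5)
Your proposal is correct and follows essentially the same route as the paper. You apply the separation identity to the graph of $h$, show that its intersection with $M_j$ is zero to conclude $h=0$, and derive the componentwise consequence the same way; your explicit uniqueness-of-components bookkeeping is just a more detailed version of the paper's observation that $\Gamma_h=\ker h$.
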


\begin{proof}
Let $h:X_i\to X_j$ with $i\neq j$.  Its graph
\[
\Gamma_h=\{x+h(x):x\in X_i\}
\]
is a submodule of $M_i\oplus M_j\leq M$.  By
\eqref{eq:separation}, it is the direct sum of its intersections with the
$M_r$.  The intersection with $M_j$ is zero, while the intersection with
$M_i$ corresponds to $\ker h$.  Hence $\Gamma_h=\ker h$, forcing $h=0$.

Now let $X,Y\leq M$ and $f:X\to Y$.  Both $X$ and $Y$ separate by
\eqref{eq:separation}; the off-diagonal components of $f$ vanish by the
first part.
\end{proof}

\begin{lemma}\label{lem:summand-component}
Under a submodule-separating decomposition, a submodule
$N=\bigoplus_iN_i$ is a direct summand of a submodule
$L=\bigoplus_iL_i$ if and only if $N_i\summand L_i$ for every $i$.
\end{lemma}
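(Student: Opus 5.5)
The plan is to prove the two implications separately, using only the separation identity \eqref{eq:separation} and Lemma~\ref{lem:cross-hom}.  Throughout, $N=\bigoplus_iN_i$ and $L=\bigoplus_iL_i$ with $N_i=N\cap M_i$ and $L_i=L\cap M_i$; these decompositions exist by \eqref{eq:separation}.

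\emph{The easy direction.}  Suppose $N_i\summand L_i$ for every $i$, and choose complements $L_i=N_i\oplus P_i$.  Then
\[
L=\bigoplus_i(N_i\oplus P_i)=N\oplus\Bigl(\bigoplus_iP_i\Bigr),
\]
so $N\summand L$.  No separation hypothesis is needed here beyond having the componentwise descriptions of $N$ and $L$.

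\emph{The converse.}  Suppose $L=N\oplus P$ for some submodule $P\leq L$.  I intend to localize this decomposition componentwise.  Since $P$ is a submodule of $M$, \eqref{eq:separation} gives $P=\bigoplus_iP_i$ with $P_i=P\cap M_i$.  The claim is then $L_i=N_i\oplus P_i$ for each $i$.

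First, the sum is direct, because $N_i\cap P_i\leq N\cap P=0$.  Second, the sum is all of $L_i$.  Take $x\in L_i$ and write $x=n+p$ with $n\in N$ and $p\in P$.  Decompose $n=\sum_jn_j$ and $p=\sum_jp_j$ along the components, where $n_j\in N_j$ and $p_j\in P_j\leq M_j$.  Compare components in $M=\bigoplus_jM_j$.  The $M_i$-component gives $x=n_i+p_i$, so $x\in N_i+P_i$.  (For $j\neq i$ one gets $n_j+p_j=0$, which is consistent but not needed.)  Hence $N_i\summand L_i$.

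\emph{Alternative route via a projection.}  One could instead use the projection $e:L\to N$ along $P$.  By Lemma~\ref{lem:cross-hom}, $e$ is componentwise.  Its restriction $e_i:L_i\to N_i$ is then an idempotent retraction, since $e$ fixes $N_i\leq N$.  This gives $N_i\summand L_i$ directly.

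\emph{Where the difficulty lies.}  The only point requiring care is the converse direction: the complement $P$ must itself split along the components.  This is exactly what submodule separation provides.  Without it, the complement could be a mixed, graph-like submodule, as in the obstruction behind Theorem~\ref{thm:split-failure}.  Once the complement is known to separate, the comparison of $M_i$-components above is immediate, so I do not expect any further obstacle.
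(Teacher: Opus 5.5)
Your proof is correct and matches the paper's: the complement $P$ separates by \eqref{eq:separation}, comparing $M_i$-components gives $L_i=N_i\oplus P_i$, and the converse is the direct sum of componentwise complements. Your component comparison usefully spells out the step the paper compresses into ``after intersecting with $M_i$''; the projection route via Lemma~\ref{lem:cross-hom} is also valid but not needed.
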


\begin{proof}
If $L=N\oplus K$, then $K$ separates and
$L_i=N_i\oplus K_i$ after intersecting with $M_i$.  The converse follows by
taking the direct sum of componentwise complements.
\end{proof}

\begin{theorem}[Componentwise $C4^{\ast}$ criterion]
\label{thm:separating-c4star}
Let $M=\bigoplus_{i=1}^nM_i$ be a submodule-separating decomposition.
Then
\[
M\text{ is }C4^{\ast}
\quad\Longleftrightarrow\quad
M_i\text{ is }C4^{\ast}\text{ for every }i.
\]
\end{theorem}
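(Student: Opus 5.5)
The forward direction is immediate: each $M_i$ is a submodule of $M$, so if $M$ is $C4^{\ast}$ then every $M_i$ is $C4^{\ast}$ by Proposition~\ref{prop:hereditary}. All the work is in the converse. Assume every $M_i$ is $C4^{\ast}$. Let $L\leq M$ be an arbitrary submodule; we must show that $L$ is $C4$. By \eqref{eq:separation} we may write $L=\bigoplus_i L_i$ with $L_i=L\cap M_i$, and each $L_i\leq M_i$ is $C4$ by hypothesis.

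Take a decomposition $L=A\oplus B$ and a homomorphism $f:A\to B$ with $\ker f\summand A$. The plan is to push everything into components. Since $A$ and $B$ are submodules of $M$, they separate as $A=\bigoplus_iA_i$ and $B=\bigoplus_iB_i$, where $A_i=A\cap M_i$ and $B_i=B\cap M_i$. Lemma~\ref{lem:summand-component}, applied to $A\summand L$ and $B\summand L$, gives $L_i=A_i\oplus B_i$. By Lemma~\ref{lem:cross-hom}, $f$ is componentwise: $f=\bigoplus_if_i$ with $f_i:A_i\to B_i$. Concretely, $f(A_i)\leq B_i$, because the projection of $f|_{A_i}$ to each $B_j$ with $j\neq i$ vanishes.

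Next I verify the hypothesis and conclusion of the $C4$ test componentwise. Since $f$ is componentwise, $\ker f=\bigoplus_i\ker f_i$ with $\ker f_i=\ker f\cap A_i$. Applying Lemma~\ref{lem:summand-component} to $\ker f\summand A$ yields $\ker f_i\summand A_i$. The $C4$ property of $L_i=A_i\oplus B_i$ then gives $\im f_i\summand B_i$. Finally, $\im f=\bigoplus_i\im f_i$, and each $\im f_i$ lies in $M_i$, so the converse half of Lemma~\ref{lem:summand-component} gives $\im f\summand B$. Hence $L$ is $C4$, and $M$ is $C4^{\ast}$.

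I expect the main obstacle to be the legitimacy of applying Lemma~\ref{lem:summand-component} to submodules such as $A$, $B$, $\ker f$ and $\im f$, rather than to $M$ itself. That lemma concerns direct summands $N\summand L$ where $N$ and $L$ are submodules of $M$, hence both separate; this is exactly our situation, since every submodule of $M$ separates. The complement $K$ chosen inside $L$ separates for the same reason, which is what justifies intersecting $L=N\oplus K$ with $M_i$. A secondary point to state explicitly is why $f$ maps $A_i$ into $B_i$. I would argue this via the graph argument of Lemma~\ref{lem:cross-hom}, applied to the composites of $f|_{A_i}$ with the projections of $B$ onto $B_j$. These are homomorphisms between submodules of distinct components, so they vanish.
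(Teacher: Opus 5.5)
Your proposal is correct and follows the paper's proof essentially step for step. You separate $A$, $B$, $\ker f$ and $\im f$ along the components, use Lemma~\ref{lem:cross-hom} to make $f$ diagonal, and apply Lemma~\ref{lem:summand-component} once to descend $\ker f\summand A$ and once to lift $\im f_i\summand B_i$. You are slightly more explicit than the paper in recording $L_i=A_i\oplus B_i$, which is needed to invoke the $C4$ property of $L_i$, and in justifying $f(A_i)\leq B_i$ via the projections onto the $B_j$.
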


\begin{proof}
The forward implication is Proposition~\ref{prop:hereditary}.  Conversely,
assume that all $M_i$ are $C4^{\ast}$ and take $N\leq M$.  Write
$N=\bigoplus_iN_i$ with $N_i=N\cap M_i$.  To verify that $N$ is $C4$,
let
\[
N=A\oplus B,
\qquad f:A\longrightarrow B,
\qquad \ker f\summand A.
\]
All submodules involved separate.  Thus
$A=\bigoplus_iA_i$, $B=\bigoplus_iB_i$, and, by
Lemma~\ref{lem:cross-hom}, $f=\bigoplus_if_i$ with
$f_i:A_i\to B_i$.  Lemma~\ref{lem:summand-component} gives
$\ker f_i\summand A_i$ for every $i$.  Since $N_i\leq M_i$ and $M_i$ is
$C4^{\ast}$, the module $N_i$ is $C4$; hence
$\im f_i\summand B_i$.  A second application of
Lemma~\ref{lem:summand-component} yields
$\im f=\bigoplus_i\im f_i\summand B$.  Thus $N$ is $C4$.  As $N$ was
arbitrary, $M$ is $C4^{\ast}$.
\end{proof}

We next treat the semi-weak-CS layer.  For a finite direct sum, essentiality
is componentwise: $\bigoplus_iX_i\ess\bigoplus_iA_i$ if and only if
$X_i\ess A_i$ for all $i$.  This follows by testing intersections with
submodules supported in one component.

\begin{theorem}[Componentwise strong criterion]
\label{thm:separating-strong}
Let $M=\bigoplus_{i=1}^nM_i$ be submodule-separating.  Then $M$ is
semi-weak-CS if and only if every $M_i$ is semi-weak-CS.  Consequently,
\[
M\text{ is strongly }C4^{\ast}
\quad\Longleftrightarrow\quad
M_i\text{ is strongly }C4^{\ast}\text{ for every }i.
\]
\end{theorem}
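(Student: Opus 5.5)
I would prove the semi-weak-CS equivalence first and then deduce the strong statement by combining it with Theorem~\ref{thm:separating-c4star}. The work is to show that the poset $\mathscr S(M)$ and its chains split along the components $M_i$. The tools are the two separation lemmas already in hand. Lemma~\ref{lem:summand-component} identifies direct summands of $M$ with families of componentwise summands. Lemma~\ref{lem:cross-hom} says that every homomorphism between submodules of $M$ is diagonal.

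\textbf{Step 1: decomposing triples.} Let $(X,Y,\alpha)\in\mathscr S(M)$. Separation gives $X=\bigoplus_iX_i$ and $Y=\bigoplus_iY_i$ with $X_i=X\cap M_i$ and $Y_i=Y\cap M_i$. By Lemma~\ref{lem:summand-component}, $X_i\summand M_i$ and $Y_i\summand M_i$. Each $X_i$ is semisimple, being a summand of $X$, and $X_i\cap Y_i=0$. Lemma~\ref{lem:cross-hom} makes $\alpha=\bigoplus_i\alpha_i$ with each $\alpha_i:X_i\to Y_i$ an isomorphism. Hence $(X_i,Y_i,\alpha_i)\in\mathscr S(M_i)$.

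Conversely, a family of triples $(X_i,Y_i,\alpha_i)\in\mathscr S(M_i)$ assembles into the triple $(\bigoplus X_i,\bigoplus Y_i,\bigoplus\alpha_i)\in\mathscr S(M)$. The intersection is zero because it can be computed componentwise. The order relation is also componentwise. So a chain in $\mathscr S(M)$ projects to a chain in each $\mathscr S(M_i)$, and the unions satisfy $\bigcup_\lambda X_\lambda=\bigoplus_i\bigcup_\lambda X_{\lambda,i}$. This last identity holds because intersection with $M_i$ commutes with directed unions.

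\textbf{Step 2: the two implications.} Suppose every $M_i$ is semi-weak-CS and a chain in $\mathscr S(M)$ is given. Choose summands $A_i,B_i\summand M_i$ with $\bigcup_\lambda X_{\lambda,i}\ess A_i$ and $\bigcup_\lambda Y_{\lambda,i}\ess B_i$. Put $A=\bigoplus A_i$ and $B=\bigoplus B_i$. These are summands of $M$, and the componentwise essentiality remark preceding the theorem gives $X\ess A$ and $Y\ess B$.

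For the other direction, suppose $M$ is semi-weak-CS and a chain in $\mathscr S(M_i)$ is given. Regard it as a chain in $\mathscr S(M)$, padding the other components with $(0,0,0)$. This gives summands $A,B\summand M$ with $X\ess A$ and $Y\ess B$. By separation and Lemma~\ref{lem:summand-component}, $A_i=A\cap M_i\summand M_i$. The componentwise essentiality remark then yields $X\ess A_i$, and similarly for $B$.

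\textbf{Step 3: the strong statement and the main obstacle.} The strong statement is immediate once the two equivalences are combined. The point requiring the most care is Step~1's claim that $X_i$ is a summand of $M_i$ and that $\alpha$ really preserves components, since $\alpha(X_i)$ must land in $Y_i$. This is exactly where Lemma~\ref{lem:cross-hom} is essential. The essentiality remark must also be applied to $X\leq A$ where both sides are separated, which holds because all submodules separate.
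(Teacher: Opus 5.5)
Your proposal is correct and follows essentially the same route as the paper: you split each chain in $\mathscr S(M)$ into componentwise chains using separation and Lemma~\ref{lem:cross-hom}, realize the unions in each $M_i$ and reassemble them via componentwise essentiality, and for the converse you pad a chain from $\mathscr S(M_i)$ with zero components and take the $i$th components of a realization in $M$. The extra details you supply (that each $(X_i,Y_i,\alpha_i)$ lies in $\mathscr S(M_i)$, and that unions of chains commute with intersection with $M_i$) are exactly the points the paper leaves implicit.
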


\begin{proof}
Assume first that every $M_i$ is semi-weak-CS and let
$\{(X_\lambda,Y_\lambda,\alpha_\lambda)\}$ be a chain in
$\mathscr S(M)$.  Submodule separation and
Lemma~\ref{lem:cross-hom} decompose this chain componentwise into chains
\[
\{(X_{\lambda i},Y_{\lambda i},\alpha_{\lambda i})\}
\subseteq\mathscr S(M_i).
\]
For each $i$, choose direct summands $A_i,B_i\summand M_i$ realizing the
unions.  Then $A=\bigoplus_iA_i$ and $B=\bigoplus_iB_i$ are direct
summands of $M$, and componentwise essentiality shows that they realize the
original unions.  Hence $M$ is semi-weak-CS.

Conversely, fix $i$ and a chain in $\mathscr S(M_i)$.  Pad all its terms
with zero components to obtain a chain in $\mathscr S(M)$.  A realization
in $M$ separates componentwise, and its $i$th components realize the
original chain.  Thus $M_i$ is semi-weak-CS.  Combine this equivalence with
Theorem~\ref{thm:separating-c4star}.
\end{proof}

\begin{corollary}[Controlled split transfer]\label{cor:controlled-split}
Let
\[
0\longrightarrow A\longrightarrow B\longrightarrow C\longrightarrow0
\]
split, and identify $B=A\oplus C$.  If this decomposition is
submodule-separating, then $B$ is $C4^{\ast}$ (respectively strongly
$C4^{\ast}$) if and only if both $A$ and $C$ have that property.
\end{corollary}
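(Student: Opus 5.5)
This is a direct specialization of Theorems~\ref{thm:separating-c4star} and \ref{thm:separating-strong} to the case $n=2$, with $M_1=A$ and $M_2=C$. The plan is first to make the identification explicit. A splitting gives an isomorphism $B\cong A\oplus C$ compatible with the inclusion of $A$ and the projection onto $C$. By Lemma~\ref{lem:invariance}, the properties $C4^{\ast}$ and strongly $C4^{\ast}$ of $B$ are the same as those of the internal direct sum $A\oplus C$. The latter is the decomposition that is assumed to be submodule-separating. In this internal sum, the summand $A$ is the image of the given monomorphism, and the summand $C$ is a complement mapped isomorphically onto $C$ by the epimorphism. So, again by Lemma~\ref{lem:invariance}, the properties of the components may be read off from $A$ and $C$ themselves.

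With this in place, both directions follow from the componentwise criteria. The implication ``$A$ and $C$ are $C4^{\ast}$ $\Rightarrow$ $B$ is $C4^{\ast}$'' is the reverse direction of Theorem~\ref{thm:separating-c4star}. The converse could be quoted from the same theorem, but it also follows from Proposition~\ref{prop:hereditary}, since both summands are submodules of $B$. That route needs no separation hypothesis, and it is worth recording. For the strong version, Theorem~\ref{thm:separating-strong} gives the equivalence in both directions for a two-term submodule-separating decomposition.

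The only point needing care is bookkeeping, not mathematics. The hypothesis ``this decomposition is submodule-separating'' refers to the chosen identification $B=A\oplus C$, that is, to a particular complement of the image of $A$. One must therefore check that the components to which the theorems are applied are exactly the modules whose properties appear in the statement, up to isomorphism. Lemma~\ref{lem:invariance} covers this. No further obstacle is expected, and the proof will be a few lines.
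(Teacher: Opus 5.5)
Your proposal is correct and matches the paper's proof, which simply applies Theorem~\ref{thm:separating-c4star} or Theorem~\ref{thm:separating-strong} to the two-term decomposition $B=A\oplus C$. Your extra remarks are also correct: the isomorphism-invariance bookkeeping via Lemma~\ref{lem:invariance}, and the observation that the $C4^{\ast}$ direction ``$B\Rightarrow A,C$'' needs only Proposition~\ref{prop:hereditary}. The strong version still needs the separating theorem, which you correctly use.
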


\begin{proof}
Apply Theorem~\ref{thm:separating-c4star} or
Theorem~\ref{thm:separating-strong}.
\end{proof}

The hypothesis fails in Theorem~\ref{thm:split-failure}: the submodule
generated by $e_1+te_2$ is not the sum of its intersections with the two
coordinate copies of $k[t]$.

\section{Finite product rings}

Submodule separation is automatic when the components are cut out by
central idempotents.  This turns the preceding abstract criterion into a
large concrete family.

\begin{lemma}\label{lem:central-separation}
Let $1=e_1+\cdots+e_n$ be a decomposition into pairwise orthogonal central
idempotents of $R$.  For every $M\in\Mod\text{-}R$,
\[
M=Me_1\oplus\cdots\oplus Me_n
\]
is submodule-separating.
\end{lemma}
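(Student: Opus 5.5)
The plan is to show two things: first that $M$ is the internal direct sum of the $Me_i$, and then that every submodule $N\leq M$ splits as in \eqref{eq:separation}. The key tool is that centrality makes the idempotents act by $R$-linear maps.

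For the decomposition, every $m\in M$ satisfies $m=m\cdot1=me_1+\cdots+me_n$, so $M=\sum_iMe_i$. Each $Me_i$ is a submodule, because for $r\in R$ centrality gives $me_ir=mre_i\in Me_i$. The sum is direct: if $\sum_i m_ie_i=0$, multiply on the right by $e_j$ and use $e_ie_j=\delta_{ij}e_i$ to get $m_je_j=0$ for each $j$.

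For separation, take $N\leq M$. One inclusion, $\bigoplus_i(N\cap Me_i)\subseteq N$, is clear. For the other, let $x\in N$ and write $x=\sum_ixe_i$. Since $N$ is a right $R$-submodule, $xe_i\in N$, and visibly $xe_i\in Me_i$. Hence $xe_i\in N\cap Me_i$, which gives $N=(N\cap Me_1)\oplus\cdots\oplus(N\cap Me_n)$.

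The one point requiring care is that the map $m\mapsto me_i$ is $R$-linear. This is exactly where centrality enters: it ensures both that $Me_i$ is a submodule and that the $i$th component of an element of $N$ is computed by an action preserving $N$. Without centrality, $Me_i$ need not be a submodule at all. The remaining verifications are routine.
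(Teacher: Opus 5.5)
Your proof is correct and follows the same route as the paper: write $x=\sum_i xe_i$ and observe that $xe_i\in N\cap Me_i$ because $N$ is closed under the right action of $R$. You also check explicitly that $M=Me_1\oplus\cdots\oplus Me_n$ is a direct sum of submodules, using centrality and orthogonality, which the paper leaves implicit.
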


\begin{proof}
For $m\in M$, one has $m=\sum_i me_i$.  If $N\leq M$ and $m\in N$,
then $me_i\in N$ because $N$ is an $R$-submodule.  Thus
$N=\bigoplus_iNe_i$, and $Ne_i=N\cap Me_i$.
\end{proof}

\begin{theorem}[Finite product theorem]\label{thm:product}
Let $R=R_1\times\cdots\times R_n$, and let $M_i=Me_i$ be the associated
$R_i$-module components of an $R$-module $M$.  Then:
\begin{enumerate}[label=\textup{(\arabic*)}]
\item $M$ is $C4$ if and only if every $M_i$ is $C4$;
\item $M$ is $C4^{\ast}$ if and only if every $M_i$ is $C4^{\ast}$;
\item $M$ is semi-weak-CS if and only if every $M_i$ is semi-weak-CS;
\item $M$ is strongly $C4^{\ast}$ if and only if every $M_i$ is strongly
$C4^{\ast}$.
\end{enumerate}
\end{theorem}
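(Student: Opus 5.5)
Let $e_1,\dots,e_n$ be the central idempotents of $R=R_1\times\cdots\times R_n$. By Lemma~\ref{lem:central-separation}, $M=Me_1\oplus\cdots\oplus Me_n$ is submodule-separating. Parts (2), (3) and (4) then follow at once from Theorems~\ref{thm:separating-c4star} and~\ref{thm:separating-strong}, once two translation points are settled. The first is that a module $M_i=Me_i$ has the same submodules, direct summands, homomorphisms, essential submodules and semisimple submodules whether it is viewed over $R$ or over $R_i$. The second is that the property, for $M_i$, is the same in both readings. For the first point I would note that $R$ acts on $Me_i$ through the surjection $R\to R_i$, whose kernel annihilates $Me_i$. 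Hence $R$-submodules and $R_i$-submodules of $Me_i$ coincide, and so do $R$-linear and $R_i$-linear maps between such modules. Every notion entering the definitions of $C4$, $C4^{\ast}$ and semi-weak-CS is built from these data, so each property of $M_i$ is independent of the choice of base ring. (Simplicity is also preserved, since simple modules are determined by their submodule lattices.)

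Part (1) is not covered by the earlier theorems, because those concern $C4^{\ast}$. I would prove it directly by repeating the core of the proof of Theorem~\ref{thm:separating-c4star} with $N=M$. Suppose every $M_i$ is $C4$, and take $M=A\oplus B$ and $f:A\to B$ with $\ker f\summand A$. Separation gives $A=\bigoplus_iA_i$ and $B=\bigoplus_iB_i$, with $A_i,B_i\leq M_i$ and $M_i=A_i\oplus B_i$, because intersecting $M=A\oplus B$ with $M_i$ respects the decomposition. By Lemma~\ref{lem:cross-hom} we have $f=\bigoplus_i f_i$ with $f_i:A_i\to B_i$. Lemma~\ref{lem:summand-component} then converts $\ker f\summand A$ into $\ker f_i\summand A_i$. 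The $C4$ property of $M_i$ gives $\im f_i\summand B_i$, and a second application of Lemma~\ref{lem:summand-component} yields $\im f\summand B$.

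For the converse, suppose $M$ is $C4$ and fix $i$. Take $M_i=A_i\oplus B_i$ and $f_i:A_i\to B_i$ with $\ker f_i\summand A_i$. Extend this to the decomposition $M=(A_i\oplus\bigoplus_{j\neq i}M_j)\oplus B_i$, and extend $f_i$ by zero on the $M_j$ with $j\neq i$. The kernel of the extended map is $\ker f_i\oplus\bigoplus_{j\neq i}M_j$, which is a summand. The $C4$ property of $M$ shows that $\im f_i$ is a summand of $B_i$.

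The only step that needs any care is the verification that $M=A\oplus B$ induces $M_i=A_i\oplus B_i$. Every $m\in M_i$ decomposes as $a+b$, and multiplying by $e_i$ gives $m=ae_i+be_i$ with $ae_i\in A\cap M_i$ and $be_i\in B\cap M_i$. Apart from this, the argument is bookkeeping.
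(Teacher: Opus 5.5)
Your proof is correct and follows the paper's argument. Parts (2)--(4) come from Lemma~\ref{lem:central-separation} together with Theorems~\ref{thm:separating-c4star} and~\ref{thm:separating-strong}, and part (1) is the same componentwise reduction of the $C4$ test that the paper phrases through the equivalence $\Mod\text{-}R\simeq\prod_i\Mod\text{-}R_i$. You make that reduction explicit via Lemmas~\ref{lem:cross-hom} and~\ref{lem:summand-component} and extension by zero, and you also spell out the $R$-versus-$R_i$ translation for $Me_i$, which the paper leaves implicit.
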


\begin{proof}
Lemma~\ref{lem:central-separation} and
Theorems~\ref{thm:separating-c4star} and
\ref{thm:separating-strong} prove (2)--(4).  For (1), every decomposition,
homomorphism, kernel and image in $\Mod$-$R$ is componentwise under the
standard equivalence
\[
\Mod\text{-}R\simeq
\prod_{i=1}^n\Mod\text{-}R_i.
\]
Direct-summand status is also componentwise, so the defining $C4$ test is
equivalent to its $n$ component tests.
\end{proof}

\begin{corollary}\label{cor:product-class}
Under the standard equivalence of module categories, the full subcategory
of $C4^{\ast}$ $R$-modules corresponds to the product of the full
subcategories of $C4^{\ast}$ $R_i$-modules.  The analogous statement holds
for strongly $C4^{\ast}$-modules.
\end{corollary}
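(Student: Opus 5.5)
The corollary should follow from Theorem~\ref{thm:product} once we make precise what ``corresponds'' means under the standard equivalence
\[
\Phi:\Mod\text{-}R\longrightarrow\prod_{i=1}^n\Mod\text{-}R_i,\qquad M\longmapsto(Me_1,\dots,Me_n),
\]
whose quasi-inverse sends $(N_1,\dots,N_n)$ to $N_1\oplus\cdots\oplus N_n$, with $R$ acting coordinatewise.

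First I will check that $\Phi$ and its inverse are compatible with the components used in Theorem~\ref{thm:product}. For an $R$-module $M$, the component $M_i=Me_i$ is an $R$-module annihilated by $1-e_i$, which is the same as an $R_i$-module. Submodules of $M_i$ are the same whether taken over $R$ or over $R_i$, because $R$ acts on $M_i$ through the projection $R\to R_i$. Direct-sum decompositions, homomorphisms, kernels and images inside $M_i$ are likewise identical in the two categories. Hence $C4$, and therefore $C4^{\ast}$, for $M_i$ has the same meaning over $R$ and over $R_i$. The same holds for semi-weak-CS: semisimplicity, direct summands, essential submodules and chains in $\mathscr S(M_i)$ are all read off the common submodule lattice.

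Second, I will derive the correspondence of objects. By Theorem~\ref{thm:product}(2), $M$ is $C4^{\ast}$ if and only if every $\Phi(M)_i$ is $C4^{\ast}$, so $\Phi$ maps the $C4^{\ast}$ subcategory into the product of the $C4^{\ast}$ subcategories. Conversely, take an object $(N_i)$ of that product and put $M=\bigoplus_i N_i$. Then $Me_i\cong N_i$, and Lemma~\ref{lem:invariance} shows that $Me_i$ is $C4^{\ast}$, so Theorem~\ref{thm:product}(2) gives that $M$ is $C4^{\ast}$.

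Third, the restriction of $\Phi$ is automatically full and faithful, since both sides are full subcategories and $\Phi$ is an equivalence. Essential surjectivity is exactly the converse step above. So $\Phi$ restricts to an equivalence of the two full subcategories. The strongly $C4^{\ast}$ case is the same argument with Theorem~\ref{thm:product}(4) in place of (2).

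The only delicate point is the first step, checking that the module-theoretic notions are unchanged when passing between $R$ and $R_i$. The rest is a formal consequence of Theorem~\ref{thm:product}.
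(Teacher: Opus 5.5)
Your proposal is correct and is essentially the paper's argument: Theorem~\ref{thm:product}(2) and (4) identify the objects, and the standard product-ring equivalence, restricted to full subcategories, takes care of the morphisms. Your extra check that $C4$ and semi-weak-CS mean the same thing for $Me_i$ over $R$ and over $R_i$ is a detail the paper leaves implicit, and it is worth keeping.
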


\begin{proof}
Theorem~\ref{thm:product} identifies the objects, and the usual product-ring
equivalence identifies all morphisms.
\end{proof}

\begin{example}\label{ex:product-domains}
Let $D_1,\dots,D_n$ be commutative nonfield domains and
$R=\prod_iD_i$.  By Lemma~\ref{lem:domain} and
Theorem~\ref{thm:product}, the regular module $R_R$ is strongly
$C4^{\ast}$.  More generally, any product module whose $i$th component is
uniserial is strongly $C4^{\ast}$ by Theorem~\ref{thm:uniserial} below.
\end{example}

The product theorem is stronger than closure under a particular split
sequence: it describes every submodule and every $C4$ test through central
blocks.  Its hypothesis is supplied here by the product-ring action itself,
so no general exchange or decomposition theorem is being invoked.

\section{Finite-length classification over Dedekind domains}

We now pass from isolated examples to a complete invariant-factor
classification.  The first observation removes the additional chain
condition from every finite-length problem.

\begin{proposition}\label{prop:finite-length-semiweak}
Every finite-length module is semi-weak-CS.  Consequently, for a
finite-length module $M$,
\[
M\text{ is strongly }C4^{\ast}
\quad\Longleftrightarrow\quad
M\text{ is }C4^{\ast}.
\]
\end{proposition}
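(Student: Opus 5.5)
The plan is to verify the semi-weak-CS condition directly from its definition, using finite length to collapse every chain to its largest member. Let $M$ have finite length and let $\{(X_\lambda,Y_\lambda,\alpha_\lambda)\}_{\lambda\in\Lambda}$ be a chain in $\mathscr S(M)$. The relation $\leq$ on triples refines inclusion in each coordinate. Hence $\{X_\lambda\}$ and $\{Y_\lambda\}$ are chains of submodules of $M$.

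The first step is to show that the chain has a largest element. Consider the integer-valued function $\lambda\mapsto \operatorname{length}(X_\lambda)+\operatorname{length}(Y_\lambda)$. It is bounded by $2\operatorname{length}(M)$, so it attains a maximum at some index $\lambda_0$. For any $\lambda$ comparable with $\lambda_0$ with $(X_{\lambda_0},Y_{\lambda_0},\alpha_{\lambda_0})\leq(X_\lambda,Y_\lambda,\alpha_\lambda)$, the inclusions $X_{\lambda_0}\leq X_\lambda$ and $Y_{\lambda_0}\leq Y_\lambda$ together with maximal total length force equality of lengths and hence equality of the submodules. The maps then agree as well, since $\alpha_\lambda$ restricts to $\alpha_{\lambda_0}$ on the same domain. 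Otherwise the triple indexed by $\lambda$ lies below the one indexed by $\lambda_0$. In either case every member of the chain is $\leq$ the triple at $\lambda_0$. Therefore
\[
X=\bigcup_\lambda X_\lambda=X_{\lambda_0},\qquad Y=\bigcup_\lambda Y_\lambda=Y_{\lambda_0}.
\]
If $\Lambda$ is empty, the unions are $0$, and the convention after the definition, $A=B=0$, applies.

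The second step takes $A=X_{\lambda_0}$ and $B=Y_{\lambda_0}$. These are direct summands of $M$ by the definition of $\mathscr S(M)$, and every module is essential in itself, so $X\ess A$ and $Y\ess B$. This proves that $M$ is semi-weak-CS. The displayed equivalence then follows immediately from the definition of strongly $C4^{\ast}$ as $C4^{\ast}$ plus semi-weak-CS.

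There is no real obstacle here. The only point needing care is the stabilization argument: the chain may be indexed by an arbitrary totally ordered set rather than by $\mathbb N$, so a maximal-length member must be chosen rather than appealing to the ascending chain condition on sequences. Equality of submodules from inclusion plus equal length is the finite-length fact that makes this work. The hypotheses $X\cap Y=0$ and semisimplicity play no role.
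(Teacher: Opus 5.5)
Your proof is correct and follows essentially the same route as the paper's: finite length forces the chain in $\mathscr S(M)$ to have a top member, and that member's two components are direct summands that are trivially essential in themselves. The paper obtains the top member from noetherian stabilization of the two coordinate chains, whereas you maximize total length. Your version handles arbitrary totally ordered index sets explicitly, but the underlying idea is the same.
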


\begin{proof}
Let $\{(X_\lambda,Y_\lambda,\alpha_\lambda)\}$ be a chain in
$\mathscr S(M)$.  The two ascending chains $\{X_\lambda\}$ and
$\{Y_\lambda\}$ stabilize because $M$ is noetherian.  Since the triples
are linearly ordered, both components stabilize simultaneously from some
term onward, and compatibility then fixes the isomorphism as well.  The
two unions are therefore the semisimple direct summands occurring in that
term.  Taking them as their own essential extensions proves that $M$ is
semi-weak-CS.  The equivalence follows from the definition of strongly
$C4^{\ast}$.
\end{proof}

We shall also use the following lattice-theoretic source of cyclic examples.

\begin{theorem}\label{thm:uniserial}
Every uniserial module is strongly $C4^{\ast}$.
\end{theorem}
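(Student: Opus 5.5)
The plan is to show two things: that every submodule of a uniserial module $U$ is $C4$, and that $U$ is semi-weak-CS. The first follows from the observation that every submodule of a uniserial module is uniserial, and a uniserial module is indecomposable (or zero). Indeed, if $N=N_1\oplus N_2$ with both summands nonzero, the chain condition forces $N_1\leq N_2$ or $N_2\leq N_1$, contradicting $N_1\cap N_2=0$. As in the proof of Lemma~\ref{lem:domain}, an indecomposable module (or the zero module) is $C4$: in any decomposition $N=A\oplus B$ one component is zero. If $A=0$, then $f=0$ and $\im f=0\summand B$. If $B=0$, then $\im f=0$ again. Hence every submodule of $U$ is $C4$, and $U$ is $C4^{\ast}$.

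For the semi-weak-CS condition, I would analyse the triples in $\mathscr S(U)$. Let $(X,Y,\alpha)\in\mathscr S(U)$. Since $X\cap Y=0$ and the submodules of $U$ are totally ordered, one of $X,Y$ is contained in the other. Combined with $X\cap Y=0$, this forces one of them to be zero, and the isomorphism $\alpha$ then forces the other to be zero as well. So $\mathscr S(U)=\{(0,0,0)\}$, and every chain has unions $X=Y=0$, realized by $A=B=0$ as the paper notes after the definition. This is the same mechanism as in Proposition~\ref{prop:socle-zero}, but it needs no socle hypothesis. It follows that $U$ is strongly $C4^{\ast}$.

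There is no serious obstacle. The only point requiring care is the degenerate case $U=0$, which is covered since the zero module is trivially $C4$ and semi-weak-CS. I would also state explicitly that uniseriality passes to submodules, because their submodule lattices are intervals of a chain.
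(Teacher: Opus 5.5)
Your proof is correct and follows essentially the same route as the paper: all submodules are uniserial and hence indecomposable, which gives $C4^{\ast}$, and the chain structure forces $\mathscr S(U)=\{(0,0,0)\}$, which gives semi-weak-CS. The only difference is cosmetic: the paper argues that the only direct summands of $U$ are $0$ and $U$, whereas you apply comparability directly to the disjoint submodules $X,Y$ and then use $\alpha$ to kill the other one, which is slightly more explicit.
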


\begin{proof}
Every submodule of a uniserial module is uniserial and hence
indecomposable.  An indecomposable module is $C4$, because each of its
direct-sum decompositions has a zero component.  Thus the original module
is $C4^{\ast}$.  Its only direct summands are $0$ and itself; consequently
it has no pair of disjoint isomorphic nonzero semisimple direct summands.
Every chain in $\mathscr S(M)$ is therefore the zero chain, proving the
semi-weak-CS condition.
\end{proof}

Let $D$ be a Dedekind domain.  For a finite-length torsion $D$-module $M$,
write $M_{(\mathfrak p)}$ for its $\mathfrak p$-primary component.  Only
finitely many of these components are nonzero.

\begin{lemma}[Primary separation]\label{lem:primary-separation}
The primary decomposition
\[
M=\bigoplus_{\mathfrak p}M_{(\mathfrak p)}
\]
is submodule-separating.  Hence the $C4$, $C4^{\ast}$,
semi-weak-CS and strongly $C4^{\ast}$ properties of $M$ are detected
componentwise.
\end{lemma}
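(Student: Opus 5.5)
The plan is to exhibit the primary components as the images of finitely many orthogonal idempotent endomorphisms that are polynomial (in fact ring-element) actions, so that every submodule is stable under them; then separation follows exactly as in Lemma~\ref{lem:central-separation}, and the componentwise detection follows from Theorems~\ref{thm:separating-c4star} and \ref{thm:separating-strong}, together with a direct argument for $C4$.

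First, since $M$ has finite length, its annihilator $I=\operatorname{Ann}_D(M)$ is a nonzero ideal (finite length torsion implies finitely generated torsion), so $I=\mathfrak p_1^{e_1}\cdots\mathfrak p_n^{e_n}$ with distinct maximal ideals, and $M_{(\mathfrak p)}=0$ for $\mathfrak p\notin\{\mathfrak p_i\}$. By the Chinese remainder theorem, $D/I\cong\prod_i D/\mathfrak p_i^{e_i}$, giving orthogonal central idempotents $\bar e_1+\cdots+\bar e_n=1$ in the commutative ring $D/I$. I will check that $M\bar e_i=M_{(\mathfrak p_i)}$: an element of $M\bar e_i$ is killed by $\mathfrak p_i^{e_i}$, hence lies in the $\mathfrak p_i$-primary part; conversely an element killed by a power of $\mathfrak p_i$ is killed by $1-\bar e_i$, since $1-e_i\in\mathfrak p_i^{e_i}$ and, modulo the annihilator of a $\mathfrak p_i$-primary element, $e_i$ acts as a unit-idempotent, hence as $1$. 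This identification is the only step needing care, and I would do it by noting that $\bar e_j$ for $j\neq i$ lies in $\mathfrak p_i^{e_i}$-image while $\mathfrak p_i^{e_i}$ annihilates $M_{(\mathfrak p_i)}$ because the primary exponent is bounded by $e_i$.

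Now $M$ is a $D/I$-module, and $D$-submodules coincide with $D/I$-submodules, $D$-homomorphisms with $D/I$-homomorphisms, and direct summand relations agree. Hence Lemma~\ref{lem:central-separation} applied over $D/I$ gives that $M=\bigoplus_i M\bar e_i=\bigoplus_{\mathfrak p}M_{(\mathfrak p)}$ is submodule-separating. For the second assertion, $C4^{\ast}$ and strongly $C4^{\ast}$ detection are Theorems~\ref{thm:separating-c4star} and \ref{thm:separating-strong}, and semi-weak-CS is the first part of Theorem~\ref{thm:separating-strong}; alternatively, all four statements are Theorem~\ref{thm:product} applied to the finite product ring $D/I\cong\prod_iD/\mathfrak p_i^{e_i}$, whose component modules are exactly the primary components. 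Since all these properties are intrinsic to the lattice of submodules and homomorphisms, which are the same over $D$ and $D/I$, the conclusion transfers back to $D$-modules; Lemma~\ref{lem:invariance} handles the identification of $M\bar e_i$ with $M_{(\mathfrak p_i)}$.

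The main obstacle is only the bookkeeping in identifying $M\bar e_i$ with the $\mathfrak p_i$-primary component and justifying passage from $D$ to $D/I$; the rest is a direct citation of Section~5.
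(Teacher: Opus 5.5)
Your proposal is correct and follows essentially the same route as the paper: a nonzero annihilator $\mathfrak a=\prod_i\mathfrak p_i^{n_i}$, Chinese-remainder idempotents in $D/\mathfrak a$ that cut out the primary components, stability of every submodule under these idempotents, and then Lemma~\ref{lem:central-separation} together with Theorem~\ref{thm:product}. The only difference is that you spell out the identification $M\bar e_i=M_{(\mathfrak p_i)}$ and the passage from $D$ to $D/I$, both of which the paper leaves implicit.
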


\begin{proof}
Choose a nonzero annihilator ideal
$\mathfrak a=\prod_{i=1}^t\mathfrak p_i^{n_i}$ of $M$.  The Chinese
remainder theorem gives pairwise orthogonal central idempotents in
\[
D/\mathfrak a\cong\prod_{i=1}^tD/\mathfrak p_i^{n_i}.
\]
Their actions on $M$ cut out the primary components.  Every $D$-submodule
is a $D/\mathfrak a$-submodule and is therefore stable under these
idempotents.
Lemma~\ref{lem:central-separation} and Theorem~\ref{thm:product} apply.
This is the finite-length case of the primary-component criterion in
\cite[Theorem~5.4]{Gokavarapu2026Localization}; the idempotent proof is
included to fix the precise decomposition used below.
\end{proof}

\begin{theorem}[Invariant-factor classification]
\label{thm:pid-classification}
Let $D$ be a Dedekind domain and let $M$ be a finite-length torsion
$D$-module.  Write
\[
M_{(\mathfrak p)}\cong
\bigoplus_{j=1}^{r_{\mathfrak p}}
D/\mathfrak p^{\lambda_{\mathfrak p j}},
\qquad
\lambda_{\mathfrak p1}\geq\cdots\geq
\lambda_{\mathfrak p r_{\mathfrak p}}\geq1.
\tag{6.1}\label{eq:elementary-divisors}
\]
Then the following hold.
\begin{enumerate}[label=\textup{(\arabic*)}]
\item $M$ is $C4$ if and only if, for every $\mathfrak p$, all the
$\lambda_{\mathfrak p j}$ are equal; equivalently, every primary component is
homocyclic.
\item $M$ is $C4^{\ast}$ if and only if, for every $\mathfrak p$, either
$r_{\mathfrak p}=1$ or
$\lambda_{\mathfrak p1}=\cdots=
\lambda_{\mathfrak p r_{\mathfrak p}}=1$; equivalently, every primary
component is cyclic or semisimple.
\item $M$ is strongly $C4^{\ast}$ if and only if it is $C4^{\ast}$.
\end{enumerate}
\end{theorem}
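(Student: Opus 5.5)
By Lemma~\ref{lem:primary-separation} all three properties are detected on primary components. So we may assume $M=M_{(\mathfrak p)}$ is $\mathfrak p$-primary. Such a module is a module over $D/\mathfrak p^n\cong D_{\mathfrak p}/\mathfrak p^nD_{\mathfrak p}$, so we work over the DVR $V=D_{\mathfrak p}$ with uniformizer $\pi$. We write $M\cong\bigoplus_j V/(\pi^{\lambda_j})$. Part (3) is immediate from Proposition~\ref{prop:finite-length-semiweak}. The work therefore lies in (1) and (2) for a primary module.

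For (1), necessity: suppose two exponents differ, say $\lambda_1>\lambda_2$. Decompose $M=A\oplus B$ with $A=V/(\pi^{\lambda_2})$ and $B=V/(\pi^{\lambda_1})\oplus(\text{rest})$. Take $f:A\to B$ to be the monomorphism sending the generator to $\pi^{\lambda_1-\lambda_2}$ in the first cyclic summand of $B$. Then $\ker f=0$. The image is a cyclic submodule of exponent $\lambda_2$ that lies in $\pi B$, hence is not pure. Since a submodule of a finite-length $V$-module is a summand iff it is pure, the image is not a summand, and $M$ fails $C4$.

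For sufficiency, let $M=(V/(\pi^n))^r$, a free module over the self-injective chain ring $\Lambda=V/(\pi^n)$. Let $M=A\oplus B$ and $f:A\to B$ with $\ker f\summand A$. Then $A$ and $B$ are summands of a free $\Lambda$-module, hence projective, hence free by Krull--Schmidt. Writing $A=\ker f\oplus A'$, $f|_{A'}$ is injective, so $\im f\cong A'$ is free. Since $\Lambda$ is self-injective (quasi-Frobenius), the free submodule $\im f$ is injective and hence splits off $B$. This direction looks routine.

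For (2), sufficiency: a cyclic primary module is uniserial, so Theorem~\ref{thm:uniserial} applies. A semisimple module is $C4^{\ast}$ because all its submodules are semisimple, and in a semisimple module every image is a summand. For necessity, suppose $r\ge2$ and $\lambda_1\ge2$. Then $M$ contains $V/(\pi^{\lambda_1})\oplus V/(\pi^{\lambda_2})$. Take $X=V/(\pi^2)$ inside the first summand (the socle-adjacent submodule $\pi^{\lambda_1-2}\cdot$), and take $Y$ inside the second summand. If $\lambda_2\ge2$, choose $Y\cong V/(\pi^2)$; then $X\cap Y=0$, $X$ is nonsemisimple, and $X\cong Y$, contradicting Proposition~\ref{prop:embedding-obstruction}. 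If $\lambda_2=1$, the submodule $N=V/(\pi^2)\oplus V/\pi$ is not homocyclic, so by part (1) it is not $C4$, and $M$ is not $C4^{\ast}$. This case split is the only delicate point, and either branch gives the contradiction.

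The main obstacle is the purity/summand criterion and the freeness argument in (1). I would state the standard fact that over $V/(\pi^n)$ (or for finite-length $V$-modules) a submodule is a summand iff it is pure, and that free $\Lambda$-modules are injective. Both can be cited from Kaplansky's elementary-divisor theory or from standard background.
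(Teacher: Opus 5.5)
Your proposal is correct and follows essentially the paper's route: reduction to a primary component over the DVR, the monomorphism given by multiplication by $\pi^{\lambda_1-\lambda_2}$ for necessity in (1), self-injectivity of $V/(\pi^n)$ for sufficiency, and the disjoint-embedding obstruction for (2). The differences are cosmetic. You rule out the summand via purity and Nakayama, whereas the paper restricts a retraction to the uniserial summand. You cite self-injectivity rather than proving it by Baer's criterion. Your $\lambda_2=1$ branch could be shortened, as in the paper, by applying (1) to $M$ itself to get homocyclicity first.
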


\begin{proof}
The displayed decomposition is the standard structure theorem for
finite-length torsion modules over a Dedekind domain
\cite{AndersonFuller1992RingsCategories,Lam1999Lectures,
FuchsSalce2001NonNoetherian}.  By Lemma~\ref{lem:primary-separation}, it is
enough to fix $\mathfrak p$.  Localization identifies the category of
finite-length $\mathfrak p$-primary $D$-modules with the corresponding
torsion category over the discrete valuation ring $D_{\mathfrak p}$;
submodules, homomorphisms and direct summands are unchanged.  Choose a
uniformizer $p$ of $D_{\mathfrak p}$ and write
\[
M_{(\mathfrak p)}\cong
\bigoplus_{j=1}^rD_{\mathfrak p}/(p^{\lambda_j}).
\]

Suppose first that $M_{(\mathfrak p)}$ is $C4$ and that two exponents
satisfy $a>b$.  Use a summand
$A\cong D_{\mathfrak p}/(p^b)$ as the first term of a decomposition and
put all remaining cyclic summands, including a summand
$V\cong D_{\mathfrak p}/(p^a)$, in the second term $B$.  Multiplication by
$p^{a-b}$ gives a monomorphism $A\to V\leq B$.  Its image is a proper
nonzero submodule of the uniserial module $V$, hence is not a direct
summand of $V$.  If it were a direct summand of $B$, a retraction restricted
to $V$ would make it a direct summand of $V$.  This contradicts the $C4$
test.  Thus all exponents must be equal.

Conversely, suppose
$M_{(\mathfrak p)}\cong(D_{\mathfrak p}/(p^n))^r$ and set
$R_n=D_{\mathfrak p}/(p^n)$.  We first verify that $R_n$ is self-injective.
Every ideal of $R_n$ has the form $p^jR_n$.  If
$h:p^jR_n\to R_n$ is a homomorphism and $h(p^j)=z$, then
$p^{n-j}z=0$, so $z\in p^jR_n$.  Hence $h$ is multiplication by an
element of $R_n$ and extends to $R_n$.  Baer's criterion proves
self-injectivity.  Therefore the finite free $R_n$-module
$M_{(\mathfrak p)}$ is injective as an $R_n$-module.

We record why this implies the required $C4$ condition.  Given
$M_{(\mathfrak p)}=A\oplus B$ and $f:A\to B$ with
$A=\ker f\oplus A_1$, put $I=f(A_1)$.  The inverse
$I\to A_1$ of $f|_{A_1}$ extends, by injectivity, to an endomorphism $h$
of $M_{(\mathfrak p)}$.  If
$\pi_1:M_{(\mathfrak p)}\to A_1$ is the projection, then
\[
f|_{A_1}\,\pi_1h:M_{(\mathfrak p)}\longrightarrow I
\]
restricts to the identity on $I$.  Thus $I=\im f$ is a direct summand of
$M_{(\mathfrak p)}$, and restriction of the retraction to $B$ makes
$I\summand B$.  This proves (1).

If $M_{(\mathfrak p)}$ is $C4^{\ast}$, then it is $C4$, so (1) makes it
homocyclic, say $(D_{\mathfrak p}/(p^n))^r$.  When $r\geq2$ and $n\geq2$,
two coordinate summands are disjoint, isomorphic and nonsimple,
contradicting Proposition~\ref{prop:embedding-obstruction}.  Hence $r=1$
or $n=1$.  Conversely, a cyclic primary module over a Dedekind domain is
uniserial and a module annihilated by $\mathfrak p$ is semisimple; in
either case it is $C4^{\ast}$.  This proves (2), again using primary
separation.  Finally, (3) follows from
Proposition~\ref{prop:finite-length-semiweak}.
\end{proof}

\begin{corollary}[Finite abelian groups]\label{cor:finite-abelian}
Let $G$ be a finite abelian group, with Sylow decomposition
\[
G_p\cong\bigoplus_{j=1}^{r_p}\mathbb Z/p^{\lambda_{pj}}\mathbb Z.
\]
Then $G$ is a $C4$ $\mathbb Z$-module if and only if every $G_p$ is
homocyclic.  It is $C4^{\ast}$ if and only if every $G_p$ is cyclic or
elementary abelian, and in this case it is automatically strongly
$C4^{\ast}$.
\end{corollary}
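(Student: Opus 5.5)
The corollary is Theorem~\ref{thm:pid-classification} specialized to the Dedekind domain $D=\mathbb Z$, so the proof is a translation of that theorem. A finite abelian group $G$ is a finite-length torsion $\mathbb Z$-module. Its composition length is the number of prime factors of $|G|$ counted with multiplicity. The nonzero primes of $\mathbb Z$ are the ideals $(p)$, and for each of them the $(p)$-primary component $G_{((p))}$ is exactly the Sylow $p$-subgroup $G_p$.

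The displayed Sylow decomposition $G_p\cong\bigoplus_j\mathbb Z/p^{\lambda_{pj}}\mathbb Z$ is therefore the decomposition \eqref{eq:elementary-divisors}, with $\mathfrak p=(p)$ and $D/\mathfrak p^{\lambda}=\mathbb Z/p^{\lambda}\mathbb Z$. We may arrange the exponents $\lambda_{pj}$ in decreasing order. If some $G_p=0$, then $r_p=0$; this component is trivially homocyclic and also trivially cyclic, so it imposes no condition in any of the statements.

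With these identifications, part~(1) of Theorem~\ref{thm:pid-classification} says that $G$ is $C4$ exactly when every $G_p$ has all its exponents equal. That is the definition of $G_p$ being homocyclic.

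Part~(2) says that $G$ is $C4^{\ast}$ exactly when, for each $p$, either $r_p=1$, meaning $G_p$ is cyclic, or all $\lambda_{pj}=1$. The second condition means $pG_p=0$, i.e.\ $G_p$ is elementary abelian, which over $\mathbb Z$ is the same as semisimple.

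Part~(3), or equivalently Proposition~\ref{prop:finite-length-semiweak} applied to the finite-length module $G$, shows that in this case $G$ is automatically strongly $C4^{\ast}$.

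None of these steps is a real obstacle. The only point needing care is the dictionary itself: the primary components of $G$ are its Sylow subgroups, and a semisimple $p$-primary component is the same as an elementary abelian $p$-group.
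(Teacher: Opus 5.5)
Your proposal is correct and follows the paper's own proof, which is simply the specialization of Theorem~\ref{thm:pid-classification} to $D=\mathbb Z$. Your dictionary (primary components are the Sylow subgroups, and semisimple $p$-primary means elementary abelian) is exactly the translation that specialization requires.
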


\begin{proof}
Apply Theorem~\ref{thm:pid-classification} with $D=\mathbb Z$.
\end{proof}

The gap between (1) and (2) is structural: a homocyclic block is
self-injective and therefore $C4$, but two nonsimple coordinate copies
create exactly the disjoint-embedding obstruction to hereditary $C4$.

\section{Smith profiles for homocyclic extensions}

The invariant-factor theorem identifies the finite-length primary $C4$
modules: they are precisely the homocyclic modules.  We now classify every
extension between two such modules.  This is strictly broader than the
cyclic-by-cyclic and semisimple-by-semisimple cases, and it is the local
calculation from which the global preservation theorem will follow.
Equivalently, a short exact sequence here is a subgroup embedding with
homocyclic kernel and quotient.  In contrast with the generally intricate
bounded-submodule categories
\cite{Schmidmeier2005Bounded,RingelSchmidmeier2008Invariant}, fixing these
endpoints reduces the relevant orbit problem to left--right equivalence of a
single matrix over a chain ring.

Throughout this section, $V$ is a discrete valuation ring with uniformizer
$\pi$, residue field $k=V/(\pi)$, and
\[
 H_{a,r}=(V/(\pi^a))^r,
 \qquad
 H_{c,s}=(V/(\pi^c))^s,
 \qquad
 m=\min\{a,c\},\quad q=\min\{r,s\}.
\tag{7.1}\label{eq:homocyclic-data}
\]
We assume $a,c,r,s\geq1$ and write $V_h=V/(\pi^h)$.

\begin{lemma}[Extension matrices and endpoint automorphisms]
\label{lem:extension-matrices}
There is a natural identification
\[
 \Ext^1_V(H_{c,s},H_{a,r})
 \cong \operatorname{Mat}_{r\times s}(V_m).
\tag{7.2}\label{eq:matrix-extension-space}
\]
Under this identification, the
$\operatorname{Aut}(H_{a,r})\times\operatorname{Aut}(H_{c,s})$-action is
left--right equivalence:
\[
 X\longmapsto UXW,\qquad
 U\in\operatorname{GL}_r(V_m),\quad
 W\in\operatorname{GL}_s(V_m).
\tag{7.3}\label{eq:left-right-action}
\]
Every pair $(U,W)$ in \eqref{eq:left-right-action} is induced by endpoint
automorphisms.  This is the endpoint action on short exact sequences, rather
than the full middle-term automorphism action used in Hall-polynomial counts;
compare \cite{Schmidmeier2012HallAutomorphisms}.
\end{lemma}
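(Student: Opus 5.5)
We will compute $\Ext^1_V(H_{c,s},H_{a,r})$ from the free resolution of $V_c$, use additivity in both variables, and then track the induced action of endpoint automorphisms.

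The starting point is the projective resolution
\[
0\longrightarrow V\xrightarrow{\ \pi^c\ }V\longrightarrow V_c\longrightarrow0 .
\]
Applying $\Hom_V(-,V_a)$ gives $\Ext^1_V(V_c,V_a)\cong V_a/\pi^cV_a\cong V_m$, with $m=\min\{a,c\}$. Concretely, the class of $x\in V_a$ is the pushout of the resolution along $V\to V_a$, $1\mapsto x$.

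Additivity of $\Ext^1$ in each variable identifies $\Ext^1_V(H_{c,s},H_{a,r})$ with the $r\times s$ array of the groups $\Ext^1_V(V_c,V_a)$. Here the $(i,j)$ entry records the component from the $j$th summand of $H_{c,s}$ to the $i$th summand of $H_{a,r}$. This yields \eqref{eq:matrix-extension-space}. Naturality holds because everything is computed functorially from the fixed resolution $V^s\xrightarrow{\pi^c}V^s$ of $H_{c,s}$.

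Next we handle the action. An endomorphism of $H_{a,r}$ is an $r\times r$ matrix over $\operatorname{End}(V_a)=V_a$, and $\operatorname{Aut}(H_{a,r})=\operatorname{GL}_r(V_a)$. Similarly, $\operatorname{Aut}(H_{c,s})=\operatorname{GL}_s(V_c)$.

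Covariant functoriality in the second variable acts on a class by post-composition. In coordinates, $U_0\in\operatorname{GL}_r(V_a)$ acts by left multiplication by its reduction $\bar U_0$ mod $\pi^m$. To see this, lift $U_0$ and compose it with the pushout map $V^s\to H_{a,r}$.

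Contravariant functoriality in the first variable needs more care. Given $W_0\in\operatorname{GL}_s(V_c)$, lift it to a matrix $\tilde W\in\operatorname{Mat}_s(V)$. The diagonal map $\pi^c$ commutes with scalar-free matrices, so $\tilde W$ gives a chain map on the resolution $V^s\xrightarrow{\pi^c}V^s$ lifting $W_0$. The induced map on $\Ext^1$ is therefore right multiplication by $\tilde W$ reduced mod $\pi^m$. It is independent of the lift, since different lifts differ by $\pi^c$, which vanishes mod $\pi^m$.

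Whether one writes $W$ or $W^{-1}$, and left or right action, depends only on the chosen convention for the action. We will fix the action as $(U,W)\cdot\xi=U_*\,(W^{-1})^*\xi$, or its transpose convention, so that the formula reads $X\mapsto UXW$. Thus every endpoint automorphism acts through \eqref{eq:left-right-action}.

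For surjectivity onto $\operatorname{GL}_r(V_m)\times\operatorname{GL}_s(V_m)$, we use that reduction $V_a\to V_m$ is a surjective map of local rings. A square matrix over a local ring is invertible exactly when its residue mod $\pi$ is invertible. Given $U\in\operatorname{GL}_r(V_m)$, any entrywise lift $U_0\in\operatorname{Mat}_r(V_a)$ has invertible residue matrix, so $U_0\in\operatorname{GL}_r(V_a)$. The same argument lifts $W$ to $\operatorname{GL}_s(V_c)$.

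The main obstacle is the bookkeeping step: confirming that the contravariant action really is right multiplication by the reduced lift, with no twist, and keeping the conventions consistent. We will settle this by working with explicit pushout and pullback diagrams of the fixed resolution rather than abstract naturality.
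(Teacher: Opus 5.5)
Your proposal is correct and follows the paper's proof. Both compute $\Ext^1_V(V_c,V_a)\cong V_m$ from the two-term resolution, extend by additivity in both variables, and read pushout and pullback along endpoint automorphisms as row and column operations. Both then lift invertible matrices entrywise, using that invertibility over a local ring is detected modulo $\pi$. Your explicit chain-map lift of $W_0$, and the check that it is independent of the lift, just spell out what the paper states in one line.

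One small slip: with your convention $U_*(W^{-1})^*$, the coordinate formula is $X\mapsto \bar U X\bar W^{-1}$ rather than $UXW$. This is harmless, since inversion permutes $\operatorname{GL}_s(V_m)$ and the action is still exactly left--right equivalence.
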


\begin{proof}
The standard resolution of $V/(\pi^c)$ gives
\[
 \Ext^1_V(V/(\pi^c),V/(\pi^a))
 \cong (V/(\pi^a))/\pi^c(V/(\pi^a))\cong V_m.
\]
Additivity of $\Ext^1$ in both variables yields
\eqref{eq:matrix-extension-space}.  Pushout in the first endpoint and
pullback in the second give row and column operations, respectively.  The
reduction maps
\[
 \operatorname{GL}_r(V_a)\twoheadrightarrow\operatorname{GL}_r(V_m),
 \qquad
 \operatorname{GL}_s(V_c)\twoheadrightarrow\operatorname{GL}_s(V_m)
\]
are surjective: lift the entries of an invertible matrix; its determinant
remains a unit because invertibility over every $V_h$ is detected modulo
$\pi$.  Thus the entire left--right action is induced by endpoint
automorphisms.
\end{proof}

\begin{theorem}[Smith orbits and filtered ranks]
\label{thm:smith-orbits}
Every endpoint-automorphism orbit in
$\Ext^1_V(H_{c,s},H_{a,r})$ has a unique Smith profile
\[
 \boldsymbol\nu(X)=(\nu_1,\ldots,\nu_q),
 \qquad
 0\leq\nu_1\leq\cdots\leq\nu_q\leq m,
\tag{7.4}\label{eq:smith-profile}
\]
meaning that $X$ is left--right equivalent over $V_m$ to
\[
 \operatorname{diag}(\pi^{\nu_1},\ldots,\pi^{\nu_q}),
\tag{7.5}\label{eq:smith-diagonal}
\]
with the evident rectangular zero rows or columns and with
$\pi^m=0$ in $V_m$.

The profile has an intrinsic filtered description.  For $1\leq h\leq m$,
let $X_h:V_h^s\to V_h^r$ be the reduction of $X$ and put
\[
 \rho_h(X)=
 \dim_k\bigl(\operatorname{Im}X_h/
 \pi\operatorname{Im}X_h\bigr).
\tag{7.6}\label{eq:valuation-rank-profile}
\]
Then
\[
 \rho_h(X)=\#\{i:\nu_i<h\}.
\tag{7.7}\label{eq:profile-count}
\]
Consequently the nondecreasing sequence
$\boldsymbol\rho(X)=(\rho_1(X),\ldots,\rho_m(X))$ is a complete orbit
invariant.  More precisely,
\[
 \begin{aligned}
 \#\{i:\nu_i=0\}&=\rho_1,\\
 \#\{i:\nu_i=t\}&=\rho_{t+1}-\rho_t
       &&(1\leq t<m),\\
 \#\{i:\nu_i=m\}&=q-\rho_m.
 \end{aligned}
\tag{7.8}\label{eq:recover-smith}
\]
Every nondecreasing sequence
$0\leq\rho_1\leq\cdots\leq\rho_m\leq q$ occurs.
\end{theorem}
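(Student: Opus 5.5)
By Lemma~\ref{lem:extension-matrices}, the endpoint-automorphism orbits in $\Ext^1_V(H_{c,s},H_{a,r})$ are exactly the left--right equivalence classes of $\operatorname{Mat}_{r\times s}(V_m)$ under $\operatorname{GL}_r(V_m)\times\operatorname{GL}_s(V_m)$. So the theorem reduces to three statements about matrices over the chain ring $V_m$: Smith normal form exists, the exponents are unique, and they are recorded by the filtered ranks. I plan to prove existence by the classical elimination argument, then compute $\rho_h$ on the diagonal form. Uniqueness and completeness of both invariants will then follow from the fact that $\rho_h$ is invariant under left--right equivalence.

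\emph{Existence.} If $X=0$, take all $\nu_i=m$. Otherwise, every nonzero element of $V_m$ has the form $u\pi^e$ with $u$ a unit and $0\le e<m$. Choose an entry of minimal valuation $e$ and move it to position $(1,1)$ by permutation matrices. Scale by a unit so that this entry becomes $\pi^{e}$. Every other entry in its row and column lies in $\pi^eV_m$, so elementary row and column operations clear them. Induction on the remaining $(r-1)\times(s-1)$ block, whose entries all lie in $\pi^eV_m$, gives $\nu_1\le\nu_2\le\cdots$. When a block becomes zero, the remaining exponents are set equal to $m$. All operations used are invertible over $V_m$.

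\emph{Filtered rank formula.} First, $\rho_h$ is an orbit invariant. The reduction of $UXW$ modulo $\pi^h$ is $\bar U X_h \bar W$ with $\bar U,\bar W$ invertible. Hence $\operatorname{Im}(UXW)_h=\bar U\operatorname{Im}X_h$, which is an isomorphic $V_h$-module, and $\dim_k(N/\pi N)$ is an isomorphism invariant. It therefore suffices to evaluate $\rho_h$ on the diagonal form \eqref{eq:smith-diagonal}. Its reduction modulo $\pi^h$ has image
\[
\bigoplus_{i}\pi^{\min(\nu_i,h)}V_h\cong\bigoplus_i V_{h-\min(\nu_i,h)}.
\]
The summand $V_{h-\nu_i}$ is nonzero exactly when $\nu_i<h$, and each nonzero cyclic $V_h$-module $V_j$ contributes $1$ to $\dim_k(N/\pi N)$. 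This proves \eqref{eq:profile-count}.

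\emph{Consequences.} The identities \eqref{eq:recover-smith} follow by differencing \eqref{eq:profile-count}; the last one uses that there are $q$ exponents in total. The Smith profile is therefore determined by $\boldsymbol\rho(X)$, which is an orbit invariant. This gives uniqueness of the profile, and together with existence shows that $\boldsymbol\rho$ is a complete invariant. For the realization statement, take any nondecreasing $0\le\rho_1\le\cdots\le\rho_m\le q$. Define multiplicities by \eqref{eq:recover-smith}; they are nonnegative and sum to $q$. Arrange the corresponding exponents in nondecreasing order and form the diagonal matrix \eqref{eq:smith-diagonal}; by \eqref{eq:profile-count} it realizes the given $\boldsymbol\rho$.

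The only delicate point is uniqueness of the Smith profile, since the elimination step alone does not give it. The invariance of $\rho_h$ under reduction modulo $\pi^h$ handles it, and the care needed there is to take images over $V_h$, not over $V_m$, at each level $h$.
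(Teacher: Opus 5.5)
Your proof is correct, and its skeleton matches the paper's. Both reduce to left--right equivalence via Lemma~\ref{lem:extension-matrices}, put $X$ in Smith form, evaluate $\rho_h$ on the diagonal matrix, and then difference.

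The genuine difference is where uniqueness of the profile comes from. The paper cites Smith reduction over principal ideal rings for both existence and uniqueness (uniqueness of the ideals generated by the diagonal entries). It then computes $\rho_h$ on the diagonal form, tacitly using that $\rho_h$ is unchanged under $X\mapsto UXW$.

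You instead prove existence by explicit minimal-valuation pivoting. You also prove the invariance of $\rho_h$ directly: reduction modulo $\pi^h$ is a ring map, $\bar W$ is surjective, and $\bar U$ is an automorphism. As a result, uniqueness of $\boldsymbol\nu$ and completeness of $\boldsymbol\rho$ both follow from the single formula \eqref{eq:profile-count}.

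Your route is self-contained and makes explicit the invariance step that the paper's proof leaves unstated. The paper's route is shorter and places the result inside the general elementary-divisor theory of principal ideal rings, but it relies on that citation for the uniqueness half.
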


\begin{proof}
The ring $V_m$ is a principal ideal chain ring.  Smith reduction over a
principal ideal ring
\cite{Kaplansky1949Elementary,Hungerford1968PIR,
McDonald1974FiniteRings,Newman1971Smith} gives
\eqref{eq:smith-diagonal}; uniqueness of the ideals generated by the diagonal
entries gives \eqref{eq:smith-profile}.  Lemma~\ref{lem:extension-matrices}
then identifies Smith equivalence with the endpoint-automorphism orbits.

Reduce \eqref{eq:smith-diagonal} modulo $\pi^h$.  The image of its $i$th
diagonal coordinate is zero when $\nu_i\geq h$ and is the cyclic ideal
$\pi^{\nu_i}V_h$ when $\nu_i<h$.  In the latter case
\[
 \pi^{\nu_i}V_h/\pi^{\nu_i+1}V_h\cong k.
\]
Thus the minimal number of generators of $\operatorname{Im}X_h$, namely the
dimension in \eqref{eq:valuation-rank-profile}, is the count in
\eqref{eq:profile-count}.  The difference formulas
\eqref{eq:recover-smith} recover every Smith exponent, proving completeness.
Conversely, assign to a prescribed nondecreasing sequence the multiplicities
in \eqref{eq:recover-smith}; the resulting diagonal matrix realizes it.
\end{proof}

For $m=1$, the single number $\rho_1$ is ordinary residue-field rank.  For
$r=s=1$, the profile is a step function: if $X$ has valuation $t<m$, then
$\rho_h(X)=0$ for $h\leq t$ and $\rho_h(X)=1$ for $h>t$; the zero class has
the identically zero profile.  Thus matrix rank and scalar valuation are the
two extreme cases of \eqref{eq:valuation-rank-profile}.

\begin{theorem}[Middle term of a Smith orbit]
\label{thm:smith-middle}
Let
\[
 \varepsilon_X:\quad
 0\longrightarrow H_{a,r}\longrightarrow B_X
 \longrightarrow H_{c,s}\longrightarrow0
\tag{7.9}\label{eq:homocyclic-extension}
\]
represent the matrix $X$, and let $\boldsymbol\nu(X)$ be its Smith profile.
Then
\[
 B_X\cong
 \bigoplus_{i=1}^{q}
 \left(V/(\pi^{\nu_i})\oplus
 V/(\pi^{a+c-\nu_i})\right)
 \oplus (V/(\pi^a))^{r-q}
 \oplus (V/(\pi^c))^{s-q},
\tag{7.10}\label{eq:smith-middle-general}
\]
where $V/(\pi^0)$ is omitted.  In particular, define the positive-exponent
multiset
\[
 \Lambda(X)=
 \bigsqcup_{i=1}^{q}
 \bigl(\{\nu_i:\nu_i>0\}\sqcup\{a+c-\nu_i\}\bigr)
 \sqcup\{a\}^{r-q}\sqcup\{c\}^{s-q}.
\tag{7.11}\label{eq:middle-exponent-multiset}
\]
Then $\Lambda(X)$ is exactly the elementary-divisor multiset of $B_X$.
\end{theorem}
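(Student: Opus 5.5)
Since the isomorphism class of $B_X$ depends only on the endpoint-automorphism orbit of $X$ (an automorphism pair $(\alpha,\gamma)$ carries $\varepsilon_X$ to an extension with the same middle term), Theorem~\ref{thm:smith-orbits} lets us replace $X$ by its Smith normal form (7.5). Additivity of $\Ext^1$ in both variables means that a block-diagonal matrix represents the direct sum of the extensions represented by its blocks, and the middle term of a direct sum of extensions is the direct sum of the middle terms. Writing $H_{a,r}=\bigoplus_{i=1}^r V/(\pi^a)e_i$ and $H_{c,s}=\bigoplus_{j=1}^s V/(\pi^c)f_j$, the diagonal form splits $\varepsilon_X$ into three kinds of summands. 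First, for $i\le q$ there is the extension of $V/(\pi^c)$ by $V/(\pi^a)$ with class $\pi^{\nu_i}\in V_m$. Second, the $r-q$ rows with no matching column contribute the split extensions $0\to V/(\pi^a)\to V/(\pi^a)\to 0\to0$. Third, the $s-q$ unmatched columns contribute $0\to0\to V/(\pi^c)\to V/(\pi^c)\to0$. The second and third kinds account for the terms $(V/(\pi^a))^{r-q}\oplus(V/(\pi^c))^{s-q}$ in (7.10).

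This reduces everything to the rank-one computation. We need to show that the extension of $V/(\pi^c)$ by $V/(\pi^a)$ with class $\pi^{\nu}$, where $0\le\nu\le m$, has middle term $V/(\pi^\nu)\oplus V/(\pi^{a+c-\nu})$. We use the identification from the proof of Lemma~\ref{lem:extension-matrices}: the class $x\in V/(\pi^a)$ modulo $\pi^c$ corresponds to the pushout of $0\to V\xrightarrow{\pi^c}V\to V/(\pi^c)\to0$ along the map $V\to V/(\pi^a)$, $1\mapsto x$. This gives the explicit presentation
\[
B\cong V^2/\bigl\langle(\pi^a,0),\,(-x,\pi^c)\bigr\rangle,
\]
that is, the cokernel of the matrix $\begin{pmatrix}\pi^a&-x\\0&\pi^c\end{pmatrix}$ over $V$. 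We take $x=\pi^\nu$, lifted to $V$; when $\nu=m$ the class is $0$, and we may equally take $x=\pi^m$, which is legitimate because $\pi^m\in\pi^cV+\pi^aV$ represents the zero class. The Smith form over the DVR $V$ is now computed from determinantal divisors. The gcd of the entries is $\pi^{\min(a,c,\nu)}=\pi^\nu$, since $\nu\le m$, and the determinant is $\pi^{a+c}$. The invariant factors are therefore $\pi^\nu$ and $\pi^{a+c-\nu}$, so $B\cong V/(\pi^\nu)\oplus V/(\pi^{a+c-\nu})$, with $V/(\pi^0)=0$ omitted. Two checks confirm the answer: $\nu=m$ gives back the split sum $V/(\pi^a)\oplus V/(\pi^c)$, and $\nu=0$ gives the cyclic module $V/(\pi^{a+c})$.

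Summing the three kinds of pieces yields (7.10). The claim about $\Lambda(X)$ then follows at once: the decomposition is into cyclic primary modules, elementary divisors over a DVR are unique by the structure theorem, and dropping the exponent-$0$ terms leaves exactly the multiset (7.11).

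The main obstacle is the bookkeeping in identifying the $\Ext$ class with the pushout presentation. One has to confirm that the identification (7.2) sends the diagonal entry $\pi^\nu$ to precisely the extension with relation matrix $\begin{pmatrix}\pi^a&-\pi^\nu\\0&\pi^c\end{pmatrix}$, and not to a unit multiple of it; any unit ambiguity is harmless, because it changes neither the gcd nor the determinant. One also has to check that the block-diagonal form does split the Yoneda extension. For this we would write the pushout for the whole matrix at once: $B_X$ is the cokernel of $\begin{pmatrix}\pi^a I_r&-\tilde X\\0&\pi^c I_s\end{pmatrix}$ for a lift $\tilde X$ of $X$. Row and column operations on this matrix then visibly decouple it into the $2\times2$, $1\times1$ and zero blocks.
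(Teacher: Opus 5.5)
Your proposal is correct and follows essentially the same route as the paper's proof. Both reduce $X$ to its Smith form using orbit-invariance of the middle term, split the presentation $\pi^a u_i=0$, $\pi^c y_j=\sum_i\alpha_{ij}u_i$ into $q$ scalar blocks plus unpaired generators, and read off each $2\times 2$ block from its first determinantal divisor $\pi^{\nu_i}$ and determinant $\pi^{a+c}$. Your added care with lifts (the case $\nu=m$, and absorbing the $\pi^m$-ambiguity by row and column operations against the $\pi^aI_r$ and $\pi^cI_s$ blocks) fills in details the paper leaves implicit.
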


\begin{proof}
Choose generators $u_1,\ldots,u_r$ for the kernel and lifts
$y_1,\ldots,y_s$ of the quotient generators.  If
$X=(\alpha_{ij})$, the middle term has relations
\[
 \pi^au_i=0,\qquad
 \pi^cy_j=\sum_{i=1}^{r}\alpha_{ij}u_i.
\tag{7.12}\label{eq:matrix-presentation}
\]
Endpoint automorphisms do not change the isomorphism type of the middle term,
so Theorem~\ref{thm:smith-orbits} reduces $X$ to
\eqref{eq:smith-diagonal}.  The presentation then separates into $q$ scalar
blocks and the unpaired kernel or quotient generators.  A scalar block with
parameter $\pi^{\nu_i}$ has relation matrix
\[
 \begin{pmatrix}\pi^a&-\pi^{\nu_i}\\0&\pi^c\end{pmatrix}.
\]
Its first Smith divisor is $\pi^{\nu_i}$ and its determinant is
$\pi^{a+c}$, so it contributes
$V/(\pi^{\nu_i})\oplus V/(\pi^{a+c-\nu_i})$.  The unpaired generators
contribute the last two terms of \eqref{eq:smith-middle-general}.
\end{proof}

Formula \eqref{eq:smith-middle-general} turns the orbit classification into
a sharp phase diagram.  Recall that every finite-length primary $C4$-module
is homocyclic, while every finite-length primary $C4^{\ast}$-module is cyclic
or semisimple.

\begin{theorem}[Complete local extension phase diagram]
\label{thm:complete-local-phase}
In \eqref{eq:homocyclic-extension}, the middle term satisfies:
\begin{enumerate}[label=\textup{(\arabic*)}]
\item $B_X$ is $C4$ if and only if exactly one of the following alternatives
holds, or both hold:
\[
 \begin{array}{ll}
 \textup{(Z4)}&a=c\text{ and }X=0,\\
 \textup{(U4)}&r=s\text{ and }X\in\operatorname{GL}_r(V_m).
 \end{array}
\tag{7.13}\label{eq:c4-local-alternatives}
\]
In case \textup{(Z4)}, $B_X\cong(V/(\pi^a))^{r+s}$; in case
\textup{(U4)}, $B_X\cong(V/(\pi^{a+c}))^r$.
\item $B_X$ is $C4^{\ast}$ if and only if exactly one of the following
alternatives holds, or both hold:
\[
 \begin{array}{ll}
 \textup{(Z$\ast$)}&a=c=1\text{ and }X=0,\\
 \textup{(U$\ast$)}&r=s=1\text{ and }X\in V_m^{\times}.
 \end{array}
\tag{7.14}\label{eq:c4star-local-alternatives}
\]
In case \textup{(Z$\ast$)}, $B_X$ is semisimple; in case
\textup{(U$\ast$)}, it is uniserial and cyclic.
\item $B_X$ is strongly $C4^{\ast}$ if and only if it is
$C4^{\ast}$.
\end{enumerate}
Thus \eqref{eq:c4star-local-alternatives} decides every extension between
finite-length primary $C4^{\ast}$ end terms, while
\eqref{eq:c4-local-alternatives} decides every extension between
finite-length primary $C4$ end terms.
\end{theorem}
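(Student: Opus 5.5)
The proof will read everything off the elementary-divisor multiset $\Lambda(X)$ of Theorem~\ref{thm:smith-middle} and compare it with the invariant-factor classification of Theorem~\ref{thm:pid-classification}, applied with $D=V$ (a single prime). By Theorem~\ref{thm:pid-classification}(1), $B_X$ is $C4$ if and only if $\Lambda(X)$ consists of one repeated value. By Theorem~\ref{thm:pid-classification}(2), $B_X$ is $C4^{\ast}$ if and only if $\Lambda(X)$ has one element or all its elements equal $1$. Part (3) is immediate from Proposition~\ref{prop:finite-length-semiweak}, since $B_X$ has finite length. Since the orbit type and the middle term depend only on the Smith profile, I may replace $X$ by its diagonal form \eqref{eq:smith-diagonal}.

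For (1), the sufficiency direction is a direct check. If $X=0$, every $\nu_i=m$. With $a=c$ this gives $m=a$, and each pair contributes $\{a,a\}$, so $\Lambda=\{a\}^{r+s}$. If $r=s$ and $X$ is invertible, then $q=r$, every $\nu_i=0$, nothing is unpaired, and $\Lambda=\{a+c\}^r$.

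Necessity is where the real casework lies, and I expect it to be the main obstacle. Suppose $\Lambda$ is constant with value $e$. If some $\nu_i$ satisfies $0<\nu_i$, then both $\nu_i$ and $a+c-\nu_i$ lie in $\Lambda$. Since $\nu_i\le m$, their equality forces $2\nu_i=a+c$, hence $\nu_i\ge\max\{a,c\}\ge m$. This can only happen when $a=c=\nu_i=m$, so $e=a$. Every other $\nu_j$ must then also give entries equal to $a$: if $\nu_j=0$ it would contribute $2a\neq a$, so $\nu_j=a$ for all $j$, which means $X=0$ and (Z4) holds.

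Otherwise all $\nu_i=0$, so $e=a+c$ whenever $q\ge1$ (and $q\ge1$ always holds). An unpaired term would contribute $a$ or $c$, both strictly less than $a+c$, so there can be none. This forces $r=s=q$, and the diagonal is all units, so $X\in\operatorname{GL}_r(V_m)$ and (U4) holds. Here I must remember that $V/(\pi^0)$ is omitted, so $\nu_i=0$ contributes only the single entry $a+c$. The phrase ``exactly one or both'' only needs a remark: both conditions hold exactly when $r=s$ and $a=c$ with $X=0$ invertible, which is impossible. So in fact at most one holds, and the wording is harmless.

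For (2), I again check sufficiency first. Under (Z$\ast$) we have $\Lambda=\{1\}^{r+s}$, so $B_X$ is semisimple. Under (U$\ast$) we have $\Lambda=\{a+c\}$, so $B_X\cong V/(\pi^{a+c})$, which is uniserial. For necessity, $C4^{\ast}$ implies $C4$, so (1) leaves two cases. In case (Z4), $\Lambda=\{a\}^{r+s}$ has $r+s\ge2$ elements, so $C4^{\ast}$ forces $a=1=c$. In case (U4), $\Lambda=\{a+c\}^r$ with $a+c\ge2$, so $C4^{\ast}$ forces $r=1$, hence $r=s=1$ and $X$ is a unit.

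The closing sentence of the statement follows because primary $C4$ (respectively $C4^{\ast}$) end terms are exactly homocyclic (respectively homocyclic with $r=1$ or $a=1$). Every such extension is therefore covered by \eqref{eq:homocyclic-extension}.
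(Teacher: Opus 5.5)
Your argument is correct. For part (1) it is the paper's proof: you read off the multiset $\Lambda(X)$ from Theorem~\ref{thm:smith-middle}, show that a constant $\Lambda(X)$ forces either all $\nu_i=m$ with $a=c$ or all $\nu_i=0$ with no unpaired factors, and check the converses directly.

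There is one small slip. From $2\nu_i=a+c$ you cannot conclude $\nu_i\ge\max\{a,c\}$. The valid step is $a+c=2\nu_i\le 2\min\{a,c\}$, which forces $a=c=\nu_i=m$; that is the conclusion you go on to use.

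For part (2) your route differs mildly from the paper's. You use that $C4^{\ast}$ implies $C4$, so only the \textup{(Z4)} and \textup{(U4)} multisets $\{a\}^{r+s}$ and $\{a+c\}^{r}$ survive. The ``cyclic or semisimple'' test is then pure bookkeeping.

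The paper does not pass through part (1) here and argues from module structure instead:
\begin{itemize}
\item If $B_X$ is cyclic, its submodule $H_{a,r}$ and quotient $H_{c,s}$ are cyclic, so $r=s=1$, and formula \eqref{eq:smith-middle-general} then forces the scalar exponent to be $0$.
\item If $B_X$ is semisimple, both end terms are semisimple, so $a=c=1$, and the sequence splits, so $X=0$.
\end{itemize}

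Your version reuses part (1) and stays entirely inside the multiset calculus. The paper's version is independent of part (1) and shows the constraints on $r,s,a,c$ as inherited by submodules and quotients. It needs the Smith formula only to single out the unit orbit.
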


\begin{proof}
By Theorem~\ref{thm:pid-classification}, $B_X$ is $C4$ exactly when all
members of $\Lambda(X)$ are equal.  Suppose first that some $\nu_i>0$.
The two exponents contributed by the $i$th block can be equal only if
\[
 \nu_i=a+c-\nu_i.
\]
Since $\nu_i\leq\min\{a,c\}$, this forces $a=c=\nu_i=m$.  If instead some
$\nu_j=0$, its block contributes the single exponent $a+c$, which cannot
equal any positive $\nu_i\leq m$.  Hence a homocyclic middle term has either
all $\nu_i=m$ or all $\nu_i=0$.

In the first case $X=0$; its paired factors have exponents $a$ and $c$, so
they, and any unpaired factors, are all equal exactly when $a=c$.  This is
\textup{(Z4)}.  In the second case every Smith entry is a unit.  Any unpaired
factor has exponent $a$ or $c$, smaller than $a+c$, so no unpaired factor may
occur.  Thus $r=s$ and $X$ is invertible over $V_m$, giving
\textup{(U4)}.  The converses follow immediately from
\eqref{eq:smith-middle-general}.  This proves (1).

For (2), the primary classification says that $B_X$ must be cyclic or
semisimple.  If it is cyclic, then its submodule $H_{a,r}$ and quotient
$H_{c,s}$ are cyclic, forcing $r=s=1$; formula
\eqref{eq:smith-middle-general} then shows that the scalar Smith exponent is
zero, equivalently $X\in V_m^{\times}$.  Conversely a unit scalar produces
$V/(\pi^{a+c})$.  If $B_X$ is semisimple, both end terms are semisimple,
so $a=c=1$, and the sequence splits; hence $X=0$.  Conversely the zero
extension with $a=c=1$ has middle term $k^{r+s}$.  Finally, $B_X$ has finite
length, so Proposition~\ref{prop:finite-length-semiweak} proves (3).
\end{proof}

The phase diagram also determines the algebraic geometry of the preservation
sets inside one Yoneda group, including the cases in which no preserving
extension exists.

\begin{corollary}[Exact preservation loci]
\label{cor:locus-geometry}
Set $E=\operatorname{Mat}_{r\times s}(V_m)$ and
\[
 E_4=\{X\in E:B_X\text{ is }C4\},
 \qquad
 E_{\ast}=\{X\in E:B_X\text{ is }C4^{\ast}\}.
\]
With a conditionally displayed set omitted when its condition fails,
\[
 E_4=
 \bigl(\{0\}\ \text{if }a=c\bigr)
 \ \cup\
 \bigl(\operatorname{GL}_r(V_m)\ \text{if }r=s\bigr),
\tag{7.15}\label{eq:c4-locus-general}
\]
and
\[
 E_{\ast}=
 \bigl(\{0\}\ \text{if }a=c=1\bigr)
 \ \cup\
 \bigl(V_m^{\times}\ \text{if }r=s=1\bigr).
\tag{7.16}\label{eq:c4star-locus-general}
\]
Consequently:
\begin{enumerate}[label=\textup{(\arabic*)}]
\item $E_4$ is an additive subgroup of $E$ exactly when either
$a=c$ and $r\neq s$, in which case $E_4=\{0\}$, or
$a=c=r=s=1$, in which case $E_4=E=k$.
\item $E_{\ast}$ is an additive subgroup of $E$ exactly when $a=c=1$.
It equals $E$ for $r=s=1$ and equals $\{0\}$ otherwise.
\item $E_{\ast}$ is nonempty exactly when $a=c=1$ or $r=s=1$; it equals
all of $E$ exactly when $a=c=r=s=1$.
\end{enumerate}
\end{corollary}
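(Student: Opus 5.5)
The plan is to read \eqref{eq:c4-locus-general} and \eqref{eq:c4star-locus-general} directly off Theorem~\ref{thm:complete-local-phase}, and then settle the subgroup and nonemptiness questions by elementary case analysis on $(a,c,r,s)$. For the loci: $X\in E_4$ exactly when (Z4) or (U4) holds. Under $a=c$ this contributes $\{0\}$; under $r=s$ it contributes $\operatorname{GL}_r(V_m)$, viewed as a subset of $E=\operatorname{Mat}_{r\times r}(V_m)$. The same reading of (Z$\ast$) and (U$\ast$) gives $E_\ast$, using $\operatorname{GL}_1(V_m)=V_m^{\times}$.

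For (1), a subgroup must contain $0$. Since $0\notin\operatorname{GL}_r(V_m)$ ($m\geq1$, so $V_m\neq0$), we need $a=c$, hence $0\in E_4$. If $r\neq s$, then $E_4=\{0\}$, which is a subgroup. If $r=s$, then $E_4=\{0\}\cup\operatorname{GL}_r(V_m)$, and I will test closure under addition.
\begin{itemize}
\item If $r\geq2$, the matrices $I$ and $-I+e_{11}$ are both invertible (the second is diagonal with entries $1,-1,\dots,-1$), but their sum $e_{11}$ is nonzero and singular. So closure fails.
\item If $r=1$ and $m\geq2$, then $1$ and $\pi-1$ are units, but their sum $\pi$ is a nonzero nonunit. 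So closure fails.
\item If $r=1$ and $m=1$, then $V_m=k$ and $\{0\}\cup k^\times=k=E$, which is a subgroup.
\end{itemize}
Since $a=c$ gives $m=a$, the case $m=1$ means $a=c=1$. This yields exactly the two stated cases. If $a\neq c$, then $E_4$ is either empty or $\operatorname{GL}_r(V_m)$; neither contains $0$, so neither is a subgroup.

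For (2), the same argument applies. Membership of $0$ forces $a=c=1$. In that case $m=1$; if $r=s=1$, then $E_\ast=\{0\}\cup k^\times=k=E$, and otherwise $E_\ast=\{0\}$. Both are subgroups.

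For (3), nonemptiness of $E_\ast$ is immediate from the union description. For $E_\ast=E$, equality requires $0\in E_\ast$, so $a=c=1$. If $r=s=1$ fails, then $E_\ast=\{0\}\neq E$, because $E$ has a nonzero element ($r,s\geq1$). If $r=s=1$ holds, then $E_\ast=E$ as in (2).

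I expect no real obstacle. The only point needing care is the non-closure examples in the $r=s$ case, and these must be chosen to handle $r\geq2$ and $r=1$ with $m\geq2$ separately, which the construction above does.
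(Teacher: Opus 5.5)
Your route is the paper's: read the loci off Theorem~\ref{thm:complete-local-phase}, use membership of $0$ to force $a=c$ (respectively $a=c=1$), then exhibit explicit failures of additive closure when $r=s$. The failure is in the witness you give for $r=s\geq2$, the one step you flagged as delicate. The $(1,1)$-entry of $-I+e_{11}$ is $-1+1=0$, so $-I+e_{11}=\operatorname{diag}(0,-1,\ldots,-1)$. This matrix is nonzero and singular, so it does not lie in $E_4=\{0\}\cup\operatorname{GL}_r(V_m)$, and the pair $I,\,-I+e_{11}$ says nothing about closure of $E_4$. Your parenthetical $\operatorname{diag}(1,-1,\ldots,-1)$ is actually $-I+2e_{11}$. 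That matrix is invertible, but its sum with $I$ is $2e_{11}$, which vanishes whenever $2=0$ in $V_m$. For instance, with residue characteristic $2$ and $m=1$ it equals $I$, and the sum is $0\in E_4$. So that reading does not work uniformly either.

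The repair is to perturb off the diagonal: take $W=-I+e_{12}$. It is upper triangular with every diagonal entry equal to $-1$, so $\det W=(-1)^r$ is a unit and $W\in\operatorname{GL}_r(V_m)$. Meanwhile $I+W=e_{12}$ is nonzero with determinant $0$ (since $r\geq2$), so it lies outside $\{0\}\cup\operatorname{GL}_r(V_m)$. This is exactly the witness $U=I_r$, $W=-I_r+E_{12}$ used in the paper. With this replacement your proof of (1) is complete. The rest is correct and matches the paper: the scalar case ($1$ and $\pi-1$ when $m\geq2$; $\{0\}\cup k^{\times}=k=E$ when $m=1$), the $a\neq c$ case, and parts (2) and (3).
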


\begin{proof}
Equations \eqref{eq:c4-locus-general} and
\eqref{eq:c4star-locus-general} restate
Theorem~\ref{thm:complete-local-phase}.  If $r=s$ but $a\neq c$, then
$E_4=\operatorname{GL}_r(V_m)$ does not contain zero.  If $a=c$ and
$r=s\geq2$, take $U=I_r$ and $W=-I_r+E_{12}$.  Both are invertible, whereas
$U+W=E_{12}$ is nonzero and singular, so
$\{0\}\cup\operatorname{GL}_r(V_m)$ is not additive.  For $r=s=1$ and
$a=c\geq2$, the same failure follows from the units $1$ and $-1+\pi$, whose
sum is the nonzero nonunit $\pi$.  This proves (1).  Formula
\eqref{eq:c4star-locus-general} has only four possibilities:
$\varnothing$, $\{0\}$, $V_m^{\times}$, or $k$.  The unit group is not an
additive subgroup when $m\geq1$ unless adjoining zero gives the field $k$,
which occurs exactly for $a=c=r=s=1$.  Statements (2) and (3) follow.
\end{proof}

\section{Boundary strata and local--global consequences}

We extract the two boundary forms of the filtered Smith invariant.  They are
useful both as explicit phase diagrams and as independent checks on the
general calculation.  Throughout this section $D$ is a principal ideal
domain.  Fix a prime element $p$ of $D$ and integers $a,c\geq1$, and put
$m=\min\{a,c\}$.

\begin{theorem}[Valuation and the middle term]
\label{thm:valuation-middle}
Under the standard identification
\[
\Ext^1_D(D/(p^c),D/(p^a))\cong D/(p^m),
\tag{8.1}\label{eq:ext-identification}
\]
let $\xi$ be an extension class.  Define
\[
s(\xi)=
\begin{cases}
v_p(\xi),&\xi\neq0,\\
m,&\xi=0.
\end{cases}
\]
Thus $0\leq s(\xi)\leq m$, with $s(\xi)<m$ for a nonzero class.  If
\[
\varepsilon_\xi:\quad
0\longrightarrow D/(p^a)\longrightarrow B_\xi
\longrightarrow D/(p^c)\longrightarrow0
\tag{8.2}\label{eq:cyclic-extension}
\]
represents $\xi$, then
\[
B_\xi\cong D/(p^{s(\xi)})\oplus
D/(p^{a+c-s(\xi)}),
\tag{8.3}\label{eq:middle-smith}
\]
where $D/(p^0)$ denotes the zero module.  Moreover, the
$\operatorname{Aut}(D/(p^a))\times\operatorname{Aut}(D/(p^c))$-orbits on
the extension group are precisely the $m+1$ strata
$s=0,1,\ldots,m$.
\end{theorem}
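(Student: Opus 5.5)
The plan is to obtain Theorem~\ref{thm:valuation-middle} as the rank-one case $r=s=1$ of Theorems~\ref{thm:smith-orbits} and \ref{thm:smith-middle}, after moving from the PID $D$ to the discrete valuation ring $V=D_{(p)}$.

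\textbf{Step 1: pass to the DVR.} Since $D/(p^a)$ and $D/(p^c)$ are $p$-primary of finite length, localization at $(p)$ leaves them unchanged. It also leaves unchanged their homomorphisms, automorphisms and $\Ext^1$. For $\Ext^1$ this holds because $\Ext^1_D(-,-)$ localizes for finitely presented modules over a noetherian ring, and the modules are already $(p)$-local. So we may replace $D$ by $V$ with uniformizer $\pi=p$, and $D/(p^h)$ by $V_h$.

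With this replacement, the identification \eqref{eq:ext-identification} is the case $r=s=1$ of \eqref{eq:matrix-extension-space}. Concretely, the resolution $0\to D\xrightarrow{p^c}D\to D/(p^c)\to0$ gives $\Ext^1\cong (D/(p^a))/p^c(D/(p^a))\cong D/(p^m)$.

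\textbf{Step 2: the invariant $s(\xi)$.} Over the chain ring $V_m$, every nonzero element can be written uniquely up to a unit as $\pi^t u$ with $0\le t<m$ and $u$ a unit. Hence $s(\xi)$ is well defined with $s(\xi)<m$ for $\xi\neq0$, and $s(0)=m$ matches the convention $\pi^m=0$ in \eqref{eq:smith-diagonal}. So the Smith profile of the $1\times1$ matrix $\xi$ is the single entry $\nu_1=s(\xi)$.

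\textbf{Step 3: the middle term.} Theorem~\ref{thm:smith-middle} with $q=r=s=1$ gives \eqref{eq:middle-smith} directly, since no unpaired summands occur. As an independent check, one can redo the computation by hand. Write $\xi=p^{s}u$; the relation matrix is $\begin{pmatrix}p^a&-p^{s}u\\0&p^c\end{pmatrix}$. Its first determinantal divisor is $p^{s}$, because $s\le\min\{a,c\}$, and its determinant is $p^{a+c}$. When $\xi=0$ we have $s=m$, and the middle term is $D/(p^a)\oplus D/(p^c)=D/(p^m)\oplus D/(p^{a+c-m})$, consistent with the formula.

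\textbf{Step 4: the orbits.} By Lemma~\ref{lem:extension-matrices}, the endpoint action is $\xi\mapsto U\xi W$ with $U,W\in V_m^{\times}$, and every such pair is realized by endpoint automorphisms. Orbits are therefore the classes of $V_m$ under multiplication by units. These are exactly $\{\pi^tu : u\in V_m^\times\}$ for $0\le t<m$, together with $\{0\}$, giving $m+1$ strata $s=0,\dots,m$. They are distinct since $s$ is invariant, which is also \eqref{eq:profile-count}.

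\textbf{Main obstacle.} The only real care point is Step 1: justifying that passing from the PID $D$ to $D_{(p)}$ preserves $\Ext^1$ and the automorphism groups. The fallback is to avoid localization altogether and compute directly over $D$. The same free resolution gives \eqref{eq:ext-identification}. The unit group of $D/(p^h)$ is the set of residues prime to $p$, so every element of $D/(p^m)$ is $p^t$ times a unit. Finally, $(D/(p^a))^\times\to(D/(p^m))^\times$ is surjective, by the same lifting argument as in Lemma~\ref{lem:extension-matrices}. With these three facts, the whole argument runs over $D$ itself.
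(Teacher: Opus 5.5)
Your proof is correct, but your main route is not the paper's. You localize at $(p)$ and read the statement off as the case $r=s=1$ of Lemma~\ref{lem:extension-matrices} and Theorems~\ref{thm:smith-orbits} and \ref{thm:smith-middle}. The paper never leaves the PID $D$:
\begin{itemize}
\item it computes $\Ext^1$ from the resolution $0\to D\xrightarrow{p^c}D\to D/(p^c)\to0$;
\item it writes the presentation $p^ax=0$, $p^cy=qx$, with $q=p^su$, or $q=0$ and $s=m$ for the split class;
\item it reads off the Smith form $\operatorname{diag}(p^{s},p^{a+c-s})$ from the first determinantal divisor $p^s$ and the determinant $p^{a+c}$;
\item it gets the orbits because endpoint automorphisms act by unit multiplication, and every unit modulo $p^m$ is induced by an automorphism of $D/(p^a)$.
\end{itemize}
That is exactly your Step~3 ``independent check'' combined with your fallback. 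So the localization step you single out as the main obstacle is never needed.

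What each route buys:
\begin{itemize}
\item Your specialization is shorter and makes the consistency with Section~7 explicit. It costs the transfer of $\Ext^1$, automorphism groups and Yoneda middle terms along $D\to D_{(p)}$, which you justify correctly.
\item Your route also turns Section~8 into a corollary. The paper instead proves these boundary cases directly, so that they serve as an independent check on the general Smith calculation.
\end{itemize}
Mathematically the two routes coincide, because the proof of Theorem~\ref{thm:smith-middle} rests on the same $2\times2$ determinantal computation.
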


\begin{proof}
Applying $\Hom_D(-,D/(p^a))$ to the standard two-term free resolution of
$D/(p^c)$ gives
\[
\Ext^1_D(D/(p^c),D/(p^a))
\cong (D/(p^a))/p^c(D/(p^a))\cong D/(p^m),
\]
as in the usual computation of cyclic extension groups
\cite{Rotman2009Homological,Weibel1994HomologicalAlgebra}.

Let $x$ be the image in $B_\xi$ of the standard generator of $D/(p^a)$,
and let $y$ lift the standard generator of $D/(p^c)$.  For a representative
$q$ of $\xi$, the middle term has presentation
\[
p^ax=0,\qquad p^cy=qx.
\]
If $\xi\neq0$, write $q=p^su$ with $u$ a unit modulo $p$; for the zero
class take $q=0$ and $s=m$.  The relation matrix
\[
\begin{pmatrix}
p^a&-q\\
0&p^c
\end{pmatrix}
\]
has first determinantal divisor $p^s$ and determinant $p^{a+c}$.
Smith normal form therefore has diagonal entries
$p^s,p^{a+c-s}$, proving \eqref{eq:middle-smith}.

Pushout and pullback along automorphisms act on
\eqref{eq:ext-identification} by multiplication by units.  Conversely,
multiplication by any unit modulo $p^m$ is induced by an automorphism of
$D/(p^a)$.  Two elements of $D/(p^m)$ differ by a unit precisely when they
have the same $p$-adic valuation, with zero as the separate stratum $s=m$.
This proves the orbit statement.
\end{proof}

\begin{theorem}[Complete extension phase diagram]
\label{thm:extension-strata}
In the notation of Theorem~\ref{thm:valuation-middle}, both end terms of
\eqref{eq:cyclic-extension} are strongly $C4^{\ast}$, and:
\begin{enumerate}[label=\textup{(\arabic*)}]
\item $B_\xi$ is $C4$ if and only if either $s(\xi)=0$, or
$\xi=0$ and $a=c$;
\item $B_\xi$ is $C4^{\ast}$ if and only if it is strongly
$C4^{\ast}$, and this occurs if and only if either $s(\xi)=0$, or
$\xi=0$ and $a=c=1$;
\item $B_\xi$ is $C4$ but not $C4^{\ast}$ exactly for the split class
with $a=c\geq2$;
\item every nonzero class divisible by $p$ has a middle term that is not
$C4$.
\end{enumerate}
\end{theorem}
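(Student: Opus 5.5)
The plan is to read every assertion off the explicit middle term \eqref{eq:middle-smith} in Theorem~\ref{thm:valuation-middle} and then apply the primary classification. First, the end terms $D/(p^a)$ and $D/(p^c)$ are cyclic $p$-primary modules over a principal ideal domain, hence uniserial. So they are strongly $C4^{\ast}$ by Theorem~\ref{thm:uniserial} (or by Theorem~\ref{thm:pid-classification}(2),(3)). Next, $B_\xi$ is a finite-length $p$-primary module with elementary-divisor multiset $\{s,a+c-s\}$, the first member omitted when $s=s(\xi)=0$. Theorem~\ref{thm:pid-classification} then says that $B_\xi$ is $C4$ if and only if this multiset is homocyclic. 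It also says that $B_\xi$ is $C4^{\ast}$, equivalently strongly $C4^{\ast}$ by part (3) of that theorem, if and only if the multiset has one element or all its elements equal $1$.

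For (1), I split into cases on $s$. If $s=0$, then $B_\xi\cong D/(p^{a+c})$ is cyclic, hence homocyclic, so it is $C4$. If $s\geq1$, both exponents are present, and homocyclicity requires $s=a+c-s$. Since $s\leq m=\min\{a,c\}$, this gives $2s=a+c\geq 2m\geq 2s$. Hence $a=c=s=m$, which means $\xi=0$ (recall that $s<m$ for nonzero classes) and $a=c$. Conversely, when $\xi=0$ and $a=c$, we get $B_\xi\cong(D/(p^a))^2$. This is essentially the scalar case $r=s=1$ of Theorem~\ref{thm:complete-local-phase}, and I could cite that theorem instead, but the direct computation is short.

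For (2), I again use the middle term. If $s=0$, then $B_\xi$ is cyclic, hence $C4^{\ast}$. If $s\geq1$, $B_\xi$ has two nonzero cyclic summands. It is then $C4^{\ast}$ only if it is semisimple, which forces $s=a+c-s=1$, and therefore $a=c=1=m=s$ and $\xi=0$. Conversely, the split class with $a=c=1$ gives $B_\xi\cong k^2$, which is semisimple. Part (3) is the set difference of (1) and (2): the split class with $a=c$ but not $a=c=1$, that is, $a=c\geq2$. I must also check that the case $s=0$ cannot appear in this difference, and it cannot, since it lies in both loci. For (4), a nonzero class divisible by $p$ has $1\leq s<m$, so it is neither of the alternatives in (1).

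No step is a genuine obstacle. The only delicate point is bookkeeping at the boundary $s=m$. I must use the convention that $s(\xi)=m$ exactly for $\xi=0$, so that the equality $s=m$ identifies the split class. I also need $V/(p^0)=0$, so that the $s=0$ case yields a cyclic module and not a spurious zero summand.
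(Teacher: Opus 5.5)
Your proposal is correct and follows the paper's proof essentially step for step. The shared steps are: uniserial end terms via Theorem~\ref{thm:uniserial}; the middle-term formula \eqref{eq:middle-smith}; the case split $s(\xi)=0$ versus $s(\xi)\geq1$, where $s=a+c-s$ together with $s\leq m$ forces $a=c=s=m$ and hence $\xi=0$; the fact that a two-generator primary module is $C4^{\ast}$ only when semisimple; and finite length for the strong equivalence. Your explicit handling of (3) as a set difference and of (4) via $1\leq s(\xi)<m$ simply spells out what the paper compresses into ``Statements (2)--(4) now follow.''
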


\begin{proof}
The end terms are uniserial, so Theorem~\ref{thm:uniserial} applies.  If
$s(\xi)=0$, formula \eqref{eq:middle-smith} makes $B_\xi$ cyclic and hence
strongly $C4^{\ast}$.  Suppose $s(\xi)>0$.  Then $B_\xi$ has two nonzero
invariant factors with exponents $s=s(\xi)$ and $a+c-s$.  By
Theorem~\ref{thm:pid-classification}, it is $C4$ exactly when these
exponents are equal.  Since $s\leq\min\{a,c\}$, equality forces
$a=c=s$.  This can occur only in the zero stratum $s=m$, and then the
middle term is $(D/(p^a))^2$.  This proves (1).

The same classification says that a two-generator primary module is
$C4^{\ast}$ exactly when it is semisimple.  Here that requires
$s=a+c-s=1$, hence $a=c=1$ and $\xi=0$.  Finite length makes
$C4^{\ast}$ equivalent to strongly $C4^{\ast}$ by
Proposition~\ref{prop:finite-length-semiweak}.  Statements (2)--(4) now
follow.
\end{proof}

\begin{corollary}[Nonlinearity of the preservation locus]
\label{cor:nonlinear-locus}
Identify the extension group in \eqref{eq:ext-identification} with
$E=D/(p^m)$.  Let
\[
E_{\ast}=\{\xi\in E:B_\xi\text{ is }C4^{\ast}\},
\qquad
E_4=\{\xi\in E:B_\xi\text{ is }C4\}.
\]
Then
\[
E_{\ast}=
\begin{cases}
E,&a=c=1,\\
E^{\times},&\text{otherwise},
\end{cases}
\qquad
E_4=
\begin{cases}
E^{\times}\cup\{0\},&a=c,\\
E^{\times},&a\neq c.
\end{cases}
\tag{8.4}\label{eq:preservation-loci}
\]
Here $E^{\times}$ denotes the unit stratum, not a group of extension
automorphisms.  Except in the simple-by-simple case $a=c=1$, neither
preservation locus is an additive subgroup of $E$.
\end{corollary}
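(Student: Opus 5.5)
The plan is to read \eqref{eq:preservation-loci} directly off Theorem~\ref{thm:extension-strata}, translating its conditions on $s(\xi)$ into subsets of $E=D/(p^m)$, and then to test additivity case by case. First I would note that $s(\xi)=0$ holds exactly when $\xi$ is a unit of $E$. Indeed, a nonzero class has $s(\xi)=v_p(\xi)<m$, and $v_p=0$ means unit modulo $p^m$. The zero class has $s=m\geq1$. So the stratum $s=0$ is precisely $E^{\times}$.

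By Theorem~\ref{thm:extension-strata}(2), $E_{\ast}=E^{\times}\cup\{0\}$ when $a=c=1$ and $E_{\ast}=E^{\times}$ otherwise. When $a=c=1$ we have $m=1$ and $E=D/(p)$ is a field, so $E^{\times}\cup\{0\}=E$. Likewise, by Theorem~\ref{thm:extension-strata}(1), $E_4=E^{\times}\cup\{0\}$ when $a=c$ and $E_4=E^{\times}$ when $a\neq c$. This gives the displayed formulas.

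For the nonadditivity claim, I would assume we are not in the case $a=c=1$. Consider first any locus equal to $E^{\times}$. It is not a subgroup, since it omits $0$ and $E\neq0$ because $m\geq1$.

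The remaining case is $E_4=E^{\times}\cup\{0\}$ with $a=c\geq2$, so $m\geq2$. Here I would take the units $1$ and $-1+p$; the latter is a unit since it is congruent to $-1$ modulo $p$. Their sum $p$ is nonzero in $D/(p^m)$ because $m\geq2$, yet it is not a unit, so the set is not closed under addition.

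The only point needing care is this last case, where one must check $p\neq0$ in $E$. It is the same argument as in Corollary~\ref{cor:locus-geometry}(1). Indeed, the whole corollary is the specialization $r=s=1$ of Corollary~\ref{cor:locus-geometry}, which could be cited as an alternative route.
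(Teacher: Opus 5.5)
Your proposal is correct and follows essentially the same route as the paper. You read \eqref{eq:preservation-loci} off Theorem~\ref{thm:extension-strata} by identifying the stratum $s=0$ with $E^{\times}$, dismiss the loci equal to $E^{\times}$ because they omit $0$, and use the units $1$ and $-1+p$, whose sum $p$ is a nonzero nonunit, when $a=c\geq2$. The only cosmetic difference is in the case $a=c\geq2$: the paper lets this single example handle both loci, whereas you dispose of $E_{\ast}=E^{\times}$ by noting that it omits zero.
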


\begin{proof}
Formula \eqref{eq:preservation-loci} is Theorem~\ref{thm:extension-strata}.
If $a\neq c$, both loci omit zero and therefore are not subgroups.  If
$a=c\geq2$, take $u=1$ and $v=-1+p$.  Both are units modulo $p^m$, while
$u+v=p$ has valuation one.  Thus $u,v\in E^{\times}$ but
$u+v\notin E^{\times}\cup\{0\}$.  When $a=c=1$, every middle term is
either cyclic of length two or semisimple, so both loci equal $E$.
\end{proof}

The preceding theorem globalizes without loss of information.

\begin{corollary}[Local--global extension criterion]
\label{cor:local-global}
Let $A=D/(\alpha)$ and $C=D/(\gamma)$ be finite-length cyclic torsion
modules, and write
\[
a_p=v_p(\alpha),\qquad c_p=v_p(\gamma).
\]
An extension class $\xi\in\Ext^1_D(C,A)$ has local coordinates
$\xi_p\in D/(p^{\min(a_p,c_p)})$ for the primes dividing both $\alpha$
and $\gamma$.  Its middle term $B_\xi$ is strongly $C4^{\ast}$ (equivalently
$C4^{\ast}$) if and only if, for every such prime $p$, either
\[
v_p(\xi_p)=0,
\qquad\text{or}\qquad
\xi_p=0\ \text{ and }\ a_p=c_p=1.
\tag{8.5}\label{eq:local-strong}
\]
It is $C4$ if and only if, for every common prime, either
\[
v_p(\xi_p)=0,
\qquad\text{or}\qquad
\xi_p=0\ \text{ and }\ a_p=c_p.
\tag{8.6}\label{eq:local-c4}
\]
\end{corollary}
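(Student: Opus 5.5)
The plan is to reduce the global statement to the local phase diagram of Theorem~\ref{thm:extension-strata}, one prime at a time, using the submodule-separating primary decomposition of Lemma~\ref{lem:primary-separation}.

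First, write $A=\bigoplus_p A_{(p)}$ and $C=\bigoplus_p C_{(p)}$, where $A_{(p)}\cong D/(p^{a_p})$ and $C_{(p)}\cong D/(p^{c_p})$. Additivity of $\Ext^1$ in both variables, together with the vanishing of $\Ext^1_D(C_{(p)},A_{(p')})$ for $p\neq p'$, gives
\[
\Ext^1_D(C,A)\cong\bigoplus_{p}\Ext^1_D(C_{(p)},A_{(p)})\cong\bigoplus_{p\mid\alpha,\,p\mid\gamma}D/(p^{\min(a_p,c_p)}).
\]
The vanishing holds because $p^{c_p}$ acts invertibly on $A_{(p')}$, since it is coprime to $p'$; equivalently, one can read it off from the resolution $0\to D\xrightarrow{p^{c_p}}D\to C_{(p)}\to0$, which identifies the group with $A_{(p')}/p^{c_p}A_{(p')}=0$. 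This defines the local coordinates $\xi_p$. The statement presumably regards the identification of the Ext group with its primary components as the standard one.

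Next, compatibility of the middle term with the decomposition. The extension $\varepsilon_\xi$ decomposes as the direct sum of the primary extensions $\varepsilon_{\xi_p}$ with middle terms $B_{\xi_p}$. This is because the middle term $B_\xi$ is itself finite-length torsion, so it has a primary decomposition, and taking $p$-primary components is exact. Explicitly, applying the idempotents of $D/(\alpha\gamma)$, as in the proof of Lemma~\ref{lem:primary-separation}, to the sequence gives exact sequences
\[
0\to A_{(p)}\to(B_\xi)_{(p)}\to C_{(p)}\to0,
\]
whose classes are the components $\xi_p$; this is naturality of the Ext decomposition. For primes dividing only one of $\alpha,\gamma$, the $p$-component of $B_\xi$ is just $A_{(p)}$ or $C_{(p)}$. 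That module is cyclic, hence uniserial, and therefore $C4$, $C4^{\ast}$ and strongly $C4^{\ast}$ by Theorem~\ref{thm:uniserial}, so such primes impose no condition.

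Finally, by Lemma~\ref{lem:primary-separation}, each of the properties $C4$, $C4^{\ast}$ and strongly $C4^{\ast}$ of $B_\xi$ holds if and only if it holds for every $(B_\xi)_{(p)}$. For a common prime $p$ the component is $B_{\xi_p}$, and Theorem~\ref{thm:extension-strata}(1),(2) decides it. The condition ``$s(\xi_p)=0$'' there is $v_p(\xi_p)=0$, meaning $\xi_p$ is a unit, and the remaining alternative is $\xi_p=0$ together with $a_p=c_p=1$, respectively $a_p=c_p$. Localizing at $p$ lets us apply the PID-local theorem even when $D$ is only Dedekind; in this section, however, $D$ is a PID already. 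The equivalence of $C4^{\ast}$ with strongly $C4^{\ast}$ follows from Proposition~\ref{prop:finite-length-semiweak}.

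I expect the main obstacle to be justifying that the primary component of the middle term represents exactly $\xi_p$ under the chosen identification, i.e.\ the naturality of the Ext decomposition with respect to pushout and pullback along the idempotent projections. I would handle this via the explicit presentation. A representative of $\xi$ gives $B_\xi$ the relations $\alpha x=0$ and $\gamma y=qx$, and multiplying by the CRT idempotents splits this into the local presentations $p^{a_p}x_p=0$ and $p^{c_p}y_p=q_px_p$, with $q_p$ a unit multiple of the image of $q$. Since multiplying a coordinate by a unit does not change its valuation, the valuation conditions are unaffected.
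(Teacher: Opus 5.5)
Your proposal is correct and follows the paper's route: primary decomposition of the extension with vanishing cross-$\Ext^1$ terms, the observation that a prime in only one support contributes a cyclic (hence uniserial) component, and then Lemma~\ref{lem:primary-separation} combined with Theorem~\ref{thm:extension-strata} applied prime by prime. You also check that the $p$-primary part of $B_\xi$ represents $\xi_p$ up to a unit multiple, using the CRT idempotents and pushout/pullback naturality; the paper's proof leaves this step implicit, and your argument for it is sound.
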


\begin{proof}
Primary decomposition splits the extension into its $p$-primary parts;
for distinct primes the cross-$\Ext^1$ groups vanish.  A prime occurring in
only one end term contributes that same cyclic primary component to the
middle term.  Lemma~\ref{lem:primary-separation} and
Theorem~\ref{thm:extension-strata} therefore give
\eqref{eq:local-strong} and \eqref{eq:local-c4} componentwise.
\end{proof}

\begin{corollary}[The unit extension family]\label{thm:pid-family}
For $m,n\geq1$, the sequence
\[
0\longrightarrow D/(p^m)
\xrightarrow{\ \bar a\mapsto\overline{p^n a}\ }
D/(p^{m+n})
\longrightarrow D/(p^n)\longrightarrow0
\tag{8.7}\label{eq:pid-extension}
\]
represents the unit orbit in
$\Ext^1_D(D/(p^n),D/(p^m))$.  It is nonsplit, and all three terms are
strongly $C4^{\ast}$.
\end{corollary}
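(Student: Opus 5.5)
The plan is to verify the three claims of Corollary~\ref{thm:pid-family} directly from Theorems~\ref{thm:valuation-middle}, \ref{thm:extension-strata} and \ref{thm:uniserial}.

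\emph{Exactness.} The map $\bar a\mapsto\overline{p^na}$ is well defined, since $p^m a\in(p^m)$ gives $p^{n}p^m a\in(p^{m+n})$. It is injective: $p^na\in(p^{m+n})$ forces $p^m\mid a$, because $D$ is a domain. Its image is $p^nD/(p^{m+n})$, and the cokernel is $D/(p^n)$ via reduction. So \eqref{eq:pid-extension} is a short exact sequence with end terms $D/(p^m)$ and $D/(p^n)$.

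\emph{Identifying the orbit.} In the notation of Theorem~\ref{thm:valuation-middle}, take $a=m$ and $c=n$. Let $x=\overline{p^n}$ be the image of the kernel generator and $y=\bar1$ a lift of the quotient generator. Then
\[
p^m x=0,\qquad p^n y=\overline{p^n}=x,
\]
so the representative is $q=1$. Its class has valuation $s=0$, and this class lies in the unit stratum. As a cross-check, \eqref{eq:middle-smith} gives $B\cong D/(p^{m+n})$, which agrees with the actual middle term. One could instead argue by contradiction: by \eqref{eq:middle-smith}, every class with $s>0$ has a middle term with two nonzero cyclic summands, whereas $D/(p^{m+n})$ is cyclic. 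Since the strata are exactly the orbits, the sequence represents the unit orbit.

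\emph{Nonsplitting and the $C4^{\ast}$ property.} Suppose the sequence split. Then $D/(p^{m+n})\cong D/(p^m)\oplus D/(p^n)$, which needs two generators because its reduction modulo $p$ is $2$-dimensional when $m,n\ge1$. This contradicts cyclicity, so the sequence does not split; equivalently, the zero class lies in the stratum $s=\min\{m,n\}\ge1\neq0$. All three terms are cyclic modules over a PID with a prime power annihilator, hence uniserial, hence strongly $C4^{\ast}$ by Theorem~\ref{thm:uniserial}. Alternatively, Theorem~\ref{thm:extension-strata}(2) with $s(\xi)=0$ gives the same conclusion for the middle term.

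The only step needing care is pinning down the correct representative, $q=1$ rather than some other unit. Any unit suffices for the orbit claim, so this is harmless.
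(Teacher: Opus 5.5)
Your proposal is correct and follows the paper's own argument. With $x=\overline{p^n}$ and $y=\bar 1$, the relation $p^ny=x$ gives extension parameter $1$, hence valuation zero and the unit stratum, after which Theorems~\ref{thm:valuation-middle}, \ref{thm:extension-strata} and \ref{thm:uniserial} finish the job; your explicit exactness check and your cyclicity argument for nonsplitting (where the paper just notes the class is nonzero) are harmless extras.
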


\begin{proof}
With $x=\overline{p^n}$ and $y=\overline1$ in the middle term, the relation
$p^ny=x$ shows that the extension parameter is $1$.  Thus its valuation is
zero, and Theorem~\ref{thm:extension-strata} applies.  It is nonsplit
because its class is nonzero.
\end{proof}

\begin{example}[Three phases in one extension group]
\label{ex:three-phases}
Take $a=c=2$.  The three automorphism orbits in
$\Ext^1_D(D/(p^2),D/(p^2))\cong D/(p^2)$ have sharply different behavior:
\[
\begin{array}{c|c|c}
\text{class stratum}&\text{middle term}&\text{status}\\ \hline
v_p(\xi)=0&D/(p^4)&\text{strongly }C4^{\ast}\\
v_p(\xi)=1&D/(p)\oplus D/(p^3)&\text{not }C4\\
\xi=0&(D/(p^2))^2&C4\text{ but not }C4^{\ast}.
\end{array}
\]
Thus neither splitting nor nonsplitting governs preservation: the decisive
invariant is the valuation orbit of the extension class.
\end{example}

The next result replaces scalar valuation by matrix rank and shows that the
orbit method is not confined to cyclic end terms.  Put $k=D/(p)$.

\begin{theorem}[Matrix-rank strata for semisimple end terms]
\label{thm:matrix-rank-strata}
Choose bases to identify
\[
\Ext^1_D(k^s,k^r)\cong\operatorname{Mat}_{r\times s}(k).
\tag{8.8}\label{eq:matrix-ext}
\]
Let $\Xi$ be the matrix of an extension
\[
0\longrightarrow k^r\longrightarrow B_\Xi
\longrightarrow k^s\longrightarrow0
\]
and put $\rho=\operatorname{rank}_k\Xi$.  Then
\[
B_\Xi\cong
(D/(p^2))^{\rho}\oplus k^{\,r+s-2\rho}.
\tag{8.9}\label{eq:rank-middle}
\]
The $\operatorname{GL}_r(k)\times\operatorname{GL}_s(k)$-orbits are
precisely the rank strata $0\leq\rho\leq\min\{r,s\}$.  Moreover:
\begin{enumerate}[label=\textup{(\arabic*)}]
\item $B_\Xi$ is $C4$ if and only if $\rho=0$ or
$\rho=r=s$;
\item $B_\Xi$ is $C4^{\ast}$, equivalently strongly $C4^{\ast}$, if and
only if $\rho=0$ or $\rho=r=s=1$;
\item in particular, a full-rank square class of size at least two has a
$C4$ middle term that is not $C4^{\ast}$, while every nonzero
non-full-rank class has a middle term that is not $C4$.
\end{enumerate}
\end{theorem}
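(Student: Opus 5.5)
This theorem is the specialization $a=c=1$ of the general homocyclic calculation of Section~7, so the plan is to deduce it from Theorems~\ref{thm:smith-orbits}, \ref{thm:smith-middle} and \ref{thm:complete-local-phase}, applied over the discrete valuation ring $V=D_{(p)}$. Localization identifies finite-length $p$-primary $D$-modules with finite-length torsion $V$-modules, preserving $\Ext^1$, submodules and direct summands, as in the proof of Theorem~\ref{thm:pid-classification}. With $a=c=1$ we have $m=1$ and $V_m=k$, so Lemma~\ref{lem:extension-matrices} gives the identification \eqref{eq:matrix-ext}. It also says that the endpoint action is the full $\operatorname{GL}_r(k)\times\operatorname{GL}_s(k)$ left--right action $\Xi\mapsto U\Xi W$. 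Because the actual endpoint automorphism groups are exactly these groups, no lifting argument is needed here.

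For the orbit statement, I will use the fact that the Smith profile over the field $k$ has entries $\nu_i\in\{0,1\}$. By \eqref{eq:profile-count}, $\rho_1(\Xi)=\#\{i:\nu_i=0\}$, and $\rho_1(\Xi)$ is simply $\operatorname{rank}_k\Xi$. Theorem~\ref{thm:smith-orbits} then shows that the orbits are exactly the rank strata, and that every $0\leq\rho\leq q=\min\{r,s\}$ occurs. This is the classical rank normal form, and it can also be quoted directly.

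For the middle term, I will substitute into \eqref{eq:smith-middle-general}:
\begin{itemize}
\item each of the $\rho$ entries $\nu_i=0$ contributes $V/(\pi^2)$;
\item each of the $q-\rho$ entries $\nu_i=1$ contributes $k\oplus k$;
\item the unpaired terms contribute $k^{r-q}\oplus k^{s-q}$.
\end{itemize}
Counting copies of $k$ gives $2(q-\rho)+(r-q)+(s-q)=r+s-2\rho$, which yields \eqref{eq:rank-middle} after passing back to $D$.

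Items (1) and (2) then come from Theorem~\ref{thm:complete-local-phase}. For (1), (Z4) with $a=c$ means $\Xi=0$, that is $\rho=0$, and (U4) means $r=s$ with $\Xi$ invertible, that is $\rho=r=s$. For (2), (Z$\ast$) gives $\rho=0$, and (U$\ast$) gives $r=s=1$ with $\Xi\neq0$, that is $\rho=r=s=1$. The equivalence with strongly $C4^{\ast}$ follows from Proposition~\ref{prop:finite-length-semiweak}.

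As a cross-check independent of the phase theorem, I will read off the same answers from Theorem~\ref{thm:pid-classification} applied to \eqref{eq:rank-middle}. The module $B_\Xi$ is homocyclic exactly when one of the two exponent types is absent, that is $\rho=0$ or $r+s=2\rho$, and the latter forces $\rho=r=s$. It is cyclic or semisimple exactly when $\rho=0$, or when $\rho=1$ with $r+s-2\rho=0$.

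Item (3) is then bookkeeping. When $\rho=r=s\geq2$, the middle term is $(D/(p^2))^r$ with $r\geq2$, which is $C4$ but not $C4^{\ast}$. When $0<\rho<\max\{r,s\}$, or more precisely when $\rho$ is neither $0$ nor $r=s$, condition (1) fails. I should check that ``non-full-rank'' is meant as $\rho\neq r=s$, including the nonsquare case where $\rho\leq q<\max\{r,s\}$. In that case both exponents $1$ and $2$ genuinely appear, since $r+s-2\rho>0$.

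The main obstacle is only notational: the symbol $\rho$ here is the rank, which must be matched with the profile $\rho_1$ of \eqref{eq:valuation-rank-profile}. One must also confirm that the identification \eqref{eq:matrix-ext}, which depends on chosen bases, is compatible with the natural one in \eqref{eq:matrix-extension-space}. Both points are immediate once bases are fixed, so I expect no substantive difficulty.
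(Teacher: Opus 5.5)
Your proof is correct, but it takes a different route from the paper's. You obtain the theorem as the $a=c=1$, $m=1$ specialization of Section~7, after localizing at $(p)$:
\begin{enumerate}
\item Lemma~\ref{lem:extension-matrices} gives the matrix space and the action.
\item Theorem~\ref{thm:smith-orbits}, with $\rho_1=\operatorname{rank}_k$, gives the orbits.
\item Theorem~\ref{thm:smith-middle} gives \eqref{eq:rank-middle}.
\item Theorem~\ref{thm:complete-local-phase} gives items (1) and (2).
\end{enumerate}

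The paper argues directly over $D$ and does not invoke Section~7. Row and column operations over the field $k$ reduce $\Xi$ to $\operatorname{diag}(I_\rho,0)$. The extension is then a direct sum of $\rho$ copies of the unique nonzero orbit in $\Ext^1_D(k,k)$, whose middle term is $D/(p^2)$, together with split copies of $k$. Items (1) and (2) are then read off from Theorem~\ref{thm:pid-classification}; this is exactly your ``cross-check'' paragraph.

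Each route buys something different. Yours is shorter once Section~7 is available, and it makes precise the claim that matrix rank is the $m=1$ boundary form of the valuation--rank profile. The paper's route uses only linear algebra over $k$, the scalar fact about $\Ext^1_D(k,k)$, and the invariant-factor classification. That independence is why Section~8 can serve, as the paper intends, as a check on the general Smith calculation rather than merely a corollary of it.

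Your worry about the meaning of ``non-full-rank'' is harmless. Whenever $\rho\geq1$ and $(\rho,\rho)\neq(r,s)$, we have $r+s-2\rho=(r-\rho)+(s-\rho)>0$. So both exponents $1$ and $2$ occur in \eqref{eq:rank-middle}, and the usual reading $\rho<\min\{r,s\}$ is just a special case of what you prove.
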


\begin{proof}
Additivity of $\Ext^1$ in each variable gives
\eqref{eq:matrix-ext}.  Changing bases in the submodule and quotient acts
by invertible row and column operations, so the orbit of $\Xi$ is
determined by its rank.  Reduce $\Xi$ to
$\operatorname{diag}(I_\rho,0)$.  Each nonzero diagonal entry represents
the unique nonzero orbit in $\Ext^1_D(k,k)$ and contributes the nonsplit
middle term $D/(p^2)$.  The remaining $r-\rho$ copies from the submodule
and $s-\rho$ copies from the quotient split and contribute copies of $k$.
This proves \eqref{eq:rank-middle}.

The invariant exponents of $B_\Xi$ are therefore $2$, repeated $\rho$
times, and $1$, repeated $r+s-2\rho$ times.  By
Theorem~\ref{thm:pid-classification}, they are all equal exactly when
$\rho=0$ or $r+s-2\rho=0$; the latter equality, together with
$\rho\leq\min\{r,s\}$, is equivalent to $\rho=r=s$.  This proves (1).
For $C4^{\ast}$, the same theorem requires the primary module to be
semisimple or cyclic.  These alternatives give respectively $\rho=0$ and
$\rho=r=s=1$.  Proposition~\ref{prop:finite-length-semiweak} supplies the
strong equivalence, proving (2) and (3).
\end{proof}

\section{Local--global completeness over Dedekind domains}

The Smith theorem closes the local orbit problem.  We now assemble those
local answers and obtain a criterion for arbitrary finite-length torsion
$C4^{\ast}$ end terms over a Dedekind domain, including mixed cyclic and
semisimple primary components.

Let $D$ be a Dedekind domain and let $A,C$ be finite-length torsion
$C4^{\ast}$-modules.  For every prime ideal $\mathfrak p$ in the common
support $\Sigma=\operatorname{Supp}(A)\cap\operatorname{Supp}(C)$, write
after localization
\[
 A_{(\mathfrak p)}\cong
 (D_{\mathfrak p}/(\pi_{\mathfrak p}^{a_{\mathfrak p}}))^{r_{\mathfrak p}},
 \qquad
 C_{(\mathfrak p)}\cong
 (D_{\mathfrak p}/(\pi_{\mathfrak p}^{c_{\mathfrak p}}))^{s_{\mathfrak p}}.
\tag{9.1}\label{eq:global-endpoint-data}
\]
Theorem~\ref{thm:pid-classification} says that
\[
 r_{\mathfrak p}=1\text{ or }a_{\mathfrak p}=1,
 \qquad
 s_{\mathfrak p}=1\text{ or }c_{\mathfrak p}=1.
\tag{9.2}\label{eq:c4star-endpoint-restriction}
\]
Put $m_{\mathfrak p}=\min\{a_{\mathfrak p},c_{\mathfrak p}\}$.  An
extension class $\xi\in\Ext^1_D(C,A)$ has a local matrix coordinate
\[
 X_{\mathfrak p}\in
 \operatorname{Mat}_{r_{\mathfrak p}\times s_{\mathfrak p}}
 \bigl(D_{\mathfrak p}/(\pi_{\mathfrak p}^{m_{\mathfrak p}})\bigr)
\tag{9.3}\label{eq:global-local-matrix}
\]
at every $\mathfrak p\in\Sigma$.

\begin{theorem}[Complete local--global phase diagram]
\label{thm:global-phase}
Let
\[
 0\longrightarrow A\longrightarrow B_\xi\longrightarrow C
 \longrightarrow0
\tag{9.4}\label{eq:global-extension}
\]
represent $\xi$.  Then:
\begin{enumerate}[label=\textup{(\arabic*)}]
\item $B_\xi$ is $C4$ if and only if, for every
$\mathfrak p\in\Sigma$, at least one of the following holds:
\[
 \begin{aligned}
 &a_{\mathfrak p}=c_{\mathfrak p}
 &&\text{and }X_{\mathfrak p}=0;\\
 &r_{\mathfrak p}=s_{\mathfrak p}
 &&\text{and }X_{\mathfrak p}\text{ is invertible over }
 D_{\mathfrak p}/(\pi_{\mathfrak p}^{m_{\mathfrak p}}).
 \end{aligned}
\tag{9.5}\label{eq:global-c4-criterion}
\]
\item $B_\xi$ is $C4^{\ast}$, equivalently strongly $C4^{\ast}$, if
and only if, for every $\mathfrak p\in\Sigma$, at least one of the
following holds:
\[
 \begin{aligned}
 &a_{\mathfrak p}=c_{\mathfrak p}=1
 &&\text{and }X_{\mathfrak p}=0;\\
 &r_{\mathfrak p}=s_{\mathfrak p}=1
 &&\text{and }X_{\mathfrak p}\text{ is a unit.}
 \end{aligned}
\tag{9.6}\label{eq:global-c4star-criterion}
\]
\item The
$\operatorname{Aut}(A)\times\operatorname{Aut}(C)$-orbit of $\xi$ is
determined by the finite family of local valuation--rank profiles
$\{\boldsymbol\rho(X_{\mathfrak p}):\mathfrak p\in\Sigma\}$.
\end{enumerate}
\end{theorem}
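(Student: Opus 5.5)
The plan is to reduce everything to the local Theorem~\ref{thm:complete-local-phase} by primary decomposition, as was done for cyclic end terms in Corollary~\ref{cor:local-global}. First choose a nonzero ideal $\mathfrak a$ annihilating $A$, $C$ and hence $B_\xi$ (for instance the product of the annihilators of $A$ and $C$). The Chinese remainder idempotents of $D/\mathfrak a$ act on the whole sequence \eqref{eq:global-extension}, and the idempotents are natural, so the sequence splits as a finite direct sum of its primary parts
\[
 0\longrightarrow A_{(\mathfrak p)}\longrightarrow (B_\xi)_{(\mathfrak p)}\longrightarrow C_{(\mathfrak p)}\longrightarrow0 .
\]
Correspondingly $\Ext^1_D(C,A)\cong\bigoplus_{\mathfrak p}\Ext^1_D(C_{(\mathfrak p)},A_{(\mathfrak p)})$, because the cross terms $\Ext^1_D(C_{(\mathfrak p)},A_{(\mathfrak q)})$ with $\mathfrak p\neq\mathfrak q$ vanish: an element of $D$ acting invertibly on one and nilpotently on the other annihilates $\Ext^1$. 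Localizing at $\mathfrak p$ identifies each summand with the $V=D_{\mathfrak p}$ extension group of Lemma~\ref{lem:extension-matrices}; this is how $X_{\mathfrak p}$ in \eqref{eq:global-local-matrix} is defined.

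For $\mathfrak p\notin\Sigma$, one end term has zero $\mathfrak p$-component. Then $(B_\xi)_{(\mathfrak p)}$ is isomorphic to $A_{(\mathfrak p)}$ or to $C_{(\mathfrak p)}$, which is $C4^{\ast}$ by Theorem~\ref{thm:pid-classification}(2) and hence also $C4$. Such primes therefore impose no condition. By Lemma~\ref{lem:primary-separation}, $B_\xi$ is $C4$, $C4^{\ast}$ or strongly $C4^{\ast}$ exactly when each $(B_\xi)_{(\mathfrak p)}$ is. For $\mathfrak p\in\Sigma$, the endpoint data \eqref{eq:global-endpoint-data} are homocyclic (by \eqref{eq:c4star-endpoint-restriction} they are even cyclic or semisimple), so Theorem~\ref{thm:complete-local-phase}(1),(2) applies verbatim after localization. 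Localization preserves submodules, summands and homomorphisms of $\mathfrak p$-primary modules, as in the proof of Theorem~\ref{thm:pid-classification}. This yields \eqref{eq:global-c4-criterion} and \eqref{eq:global-c4star-criterion}. The equivalence with strongly $C4^{\ast}$ follows from Proposition~\ref{prop:finite-length-semiweak}, since $B_\xi$ has finite length.

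For (3), $\operatorname{Aut}(A)=\prod_{\mathfrak p}\operatorname{Aut}(A_{(\mathfrak p)})$ and likewise for $C$, since automorphisms commute with the central idempotents. The action on $\Ext^1$ is therefore componentwise, and the orbit of $\xi$ is the product of the local orbits. Primes outside $\Sigma$ carry trivial $\Ext^1$. By Lemma~\ref{lem:extension-matrices} and Theorem~\ref{thm:smith-orbits}, each local orbit is determined by $\boldsymbol\rho(X_{\mathfrak p})$.

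The main obstacle is the bookkeeping needed to show that the global $\Ext^1$ coordinates and the global automorphism action genuinely factor through the localized homocyclic data. This means checking that the decomposition of $\Ext^1$ is compatible with Yoneda representatives, so that the primary part of $B_\xi$ represents $X_{\mathfrak p}$, and with pushout and pullback. I would handle this by applying the natural idempotent projections $e_{\mathfrak p}$ to the sequence and using additivity of $\Ext^1$ in both variables. Everything else is a direct citation of earlier results.
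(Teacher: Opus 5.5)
Your proposal is correct and follows the paper's proof essentially step for step. Like the paper, you use primary decomposition of $\Ext^1_D(C,A)$ and of $B_\xi$, with vanishing cross terms. You note that non-common primes contribute components that are already $C4^{\ast}$, apply Theorems~\ref{thm:smith-orbits} and \ref{thm:complete-local-phase} at each $\mathfrak p\in\Sigma$, assemble with Lemma~\ref{lem:primary-separation}, and get the strong equivalence from Proposition~\ref{prop:finite-length-semiweak}. The only difference is that you make explicit two points the paper leaves implicit: the Chinese-remainder splitting of the sequence itself, and the compatibility of the decomposition with pushout, pullback and the automorphism groups.
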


\begin{proof}
Primary decomposition gives
\[
 \Ext^1_D(C,A)\cong
 \bigoplus_{\mathfrak p\in\Sigma}
 \Ext^1_{D_{\mathfrak p}}
 (C_{(\mathfrak p)},A_{(\mathfrak p)}),
\tag{9.7}\label{eq:ext-primary-product}
\]
because the cross-$\Ext^1$ groups for distinct primes vanish.  The middle
term decomposes into the corresponding primary middle terms.  At a prime
outside $\Sigma$, its primary component is the unique nonzero endpoint
component and is already $C4^{\ast}$.  At a common prime,
Theorems~\ref{thm:smith-orbits} and
\ref{thm:complete-local-phase} give respectively the orbit invariant and
the two criteria.  Primary separation
(Lemma~\ref{lem:primary-separation}) assembles the local conclusions.
Finally, every middle term has finite length, so strongly $C4^{\ast}$ and
$C4^{\ast}$ coincide.
\end{proof}

The next consequences show that the global criterion is not merely a
decision algorithm: it identifies exactly when preservation is possible,
automatic, or compatible with Baer addition.

\begin{corollary}[Existence, universality and the mixed no-go zone]
\label{cor:global-existence}
In the setting of Theorem~\ref{thm:global-phase}:
\begin{enumerate}[label=\textup{(\arabic*)}]
\item There exists a class $\xi$ with $B_\xi$ strongly $C4^{\ast}$ if and
only if, for every $\mathfrak p\in\Sigma$,
\[
 a_{\mathfrak p}=c_{\mathfrak p}=1
 \quad\text{or}\quad
 r_{\mathfrak p}=s_{\mathfrak p}=1.
\tag{9.8}\label{eq:existence-criterion}
\]
\item Every extension class has strongly $C4^{\ast}$ middle term if and
only if
\[
 A_{(\mathfrak p)}\cong D/\mathfrak p
 \cong C_{(\mathfrak p)}
 \qquad(\mathfrak p\in\Sigma).
\tag{9.9}\label{eq:universal-preservation}
\]
\item The split extension has $C4^{\ast}$ middle term if and only if both
common primary components are semisimple at every
$\mathfrak p\in\Sigma$.
\item In particular, if at some common prime one endpoint component is a
nonsimple cyclic module and the other is a nonsimple semisimple module,
then no extension between $A$ and $C$ has even a $C4$ middle term.
\end{enumerate}
\end{corollary}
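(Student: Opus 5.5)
The plan is to read all four assertions prime by prime off Theorem~\ref{thm:global-phase}. The primary decomposition \eqref{eq:ext-primary-product} shows that $\xi$ ranges independently over the local matrix spaces $\operatorname{Mat}_{r_{\mathfrak p}\times s_{\mathfrak p}}(D_{\mathfrak p}/(\pi_{\mathfrak p}^{m_{\mathfrak p}}))$ for $\mathfrak p\in\Sigma$. Every global statement then reduces to the corresponding statement about the local loci $E_\ast^{(\mathfrak p)}$ and $E_4^{(\mathfrak p)}$. These loci are described by Corollary~\ref{cor:locus-geometry}, formulas \eqref{eq:c4-locus-general} and \eqref{eq:c4star-locus-general}. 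Since the criterion in Theorem~\ref{thm:global-phase}(2) is a conjunction over $\mathfrak p$, and the coordinates are independent, the global $C4^{\ast}$ locus is exactly the product $\prod_{\mathfrak p\in\Sigma}E_\ast^{(\mathfrak p)}$.

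For (1), a product of sets is nonempty exactly when every factor is. Corollary~\ref{cor:locus-geometry}(3) says $E_\ast^{(\mathfrak p)}\neq\varnothing$ iff $a_{\mathfrak p}=c_{\mathfrak p}=1$ or $r_{\mathfrak p}=s_{\mathfrak p}=1$, which is \eqref{eq:existence-criterion}. For the converse I will exhibit $\xi$ explicitly, taking $X_{\mathfrak p}=0$ in the first case and $X_{\mathfrak p}=1$ in the second.

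For (2), the product equals the whole group iff every factor is the whole local group. By Corollary~\ref{cor:locus-geometry}(3), this happens iff $a_{\mathfrak p}=c_{\mathfrak p}=r_{\mathfrak p}=s_{\mathfrak p}=1$. I then translate this back: $A_{(\mathfrak p)}$ and $C_{(\mathfrak p)}$ are each $D_{\mathfrak p}/(\pi_{\mathfrak p})\cong D/\mathfrak p$, giving \eqref{eq:universal-preservation}.

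For (3), the split class has $X_{\mathfrak p}=0$ at every prime. A zero matrix is a unit only if $m_{\mathfrak p}=0$, which is excluded because $a_{\mathfrak p},c_{\mathfrak p}\geq1$. Hence \eqref{eq:global-c4star-criterion} requires $a_{\mathfrak p}=c_{\mathfrak p}=1$ at every common prime, i.e.\ both common components are semisimple. A more direct route is also available: the split middle term is $A\oplus C$, whose $\mathfrak p$-component is $A_{(\mathfrak p)}\oplus C_{(\mathfrak p)}$, and this is cyclic or semisimple iff both summands are semisimple, using Theorem~\ref{thm:pid-classification}(2) and $\mathfrak p\in\Sigma$.

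For (4), suppose at some $\mathfrak p\in\Sigma$ one component is $D_{\mathfrak p}/(\pi^{a})$ with $a\geq2$ and the other is $k^{s}$ with $s\geq2$, or the same with roles exchanged. Then $r_{\mathfrak p}=1\neq s_{\mathfrak p}$, so (U4) fails. Also $a_{\mathfrak p}\neq c_{\mathfrak p}$, because one exponent is $a\geq2$ and the other is $1$, so (Z4) fails. Hence $E_4^{(\mathfrak p)}=\varnothing$ by \eqref{eq:c4-locus-general}. Theorem~\ref{thm:global-phase}(1) then shows that no $B_\xi$ is $C4$.

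I expect the main care point to be the independence and surjectivity of the local coordinates. Realizing an arbitrary family $(X_{\mathfrak p})$ by a global $\xi$ needs \eqref{eq:ext-primary-product} to be an isomorphism onto the direct sum over the finite set $\Sigma$, not merely an injection. I will justify this by additivity of $\Ext^1$ over the finite primary decompositions of $A$ and $C$, together with the vanishing of the cross terms. I also need the observation that $m_{\mathfrak p}\geq1$, so that $0$ is never a unit. This is what separates the split case in (3) from the unit case.
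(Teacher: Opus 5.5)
Your proposal is correct and follows the paper's proof. Both arguments take the local $C4^{\ast}$-loci of Corollary~\ref{cor:locus-geometry} and form their product over $\Sigma$ via \eqref{eq:ext-primary-product}: nonemptiness of the product gives (1), equality with the whole group gives (2), and containing zero gives (3). Both then prove (4) by noting that at the mixed prime neither the endpoint exponents nor the endpoint ranks agree, so both alternatives in \eqref{eq:global-c4-criterion} fail. Your explicit witnesses, the check that $0$ is not a unit because $m_{\mathfrak p}\geq1$, and your remark on surjectivity of \eqref{eq:ext-primary-product} just make explicit what the paper leaves implicit.
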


\begin{proof}
Locally, Corollary~\ref{cor:locus-geometry} says that the
$C4^{\ast}$-preservation locus is nonempty exactly under one of the two
conditions in \eqref{eq:existence-criterion}, and that it is the entire
extension group exactly in the simple-by-simple case.  It contains zero
exactly when $a_{\mathfrak p}=c_{\mathfrak p}=1$.  These observations prove
(1)--(3) after taking the product in \eqref{eq:ext-primary-product}.

For (4), suppose for example that
$A_{(\mathfrak p)}\cong(D/\mathfrak p)^r$ with $r\geq2$ and
$C_{(\mathfrak p)}\cong D/\mathfrak p^c$ with $c\geq2$; the reverse case is
symmetric.  Here neither endpoint exponents nor endpoint ranks agree, so
both alternatives in \eqref{eq:global-c4-criterion} fail for every local
matrix.  Hence no local, and therefore no global, middle term is $C4$.
\end{proof}

\begin{corollary}[Additivity criterion]
\label{cor:global-additivity}
Let
\[
 \mathcal P_{\ast}(A,C)=
 \{\xi\in\Ext^1_D(C,A):B_\xi\text{ is }C4^{\ast}\}.
\]
Then $\mathcal P_{\ast}(A,C)$ is an additive subgroup of
$\Ext^1_D(C,A)$ if and only if, at every prime in the common support, both
$A_{(\mathfrak p)}$ and $C_{(\mathfrak p)}$ are semisimple.  In that case
each local factor of the preservation locus is either the whole
residue-field extension space in the simple-by-simple case or the zero
subgroup otherwise.
\end{corollary}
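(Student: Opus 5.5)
The plan is to reduce to one prime at a time and then read off the answer from the local loci in Corollary~\ref{cor:locus-geometry}. Under the primary decomposition \eqref{eq:ext-primary-product}, Theorem~\ref{thm:global-phase}(2) gives a product decomposition
\[
 \mathcal P_{\ast}(A,C)=\prod_{\mathfrak p\in\Sigma}E_{\ast,\mathfrak p},
\]
where $E_{\ast,\mathfrak p}\subseteq\operatorname{Mat}_{r_{\mathfrak p}\times s_{\mathfrak p}}(D_{\mathfrak p}/(\pi_{\mathfrak p}^{m_{\mathfrak p}}))$ is the local $C4^{\ast}$-preservation locus. The group structure on the left is Baer sum, which under \eqref{eq:ext-primary-product} is componentwise addition of the local matrix coordinates. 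So the first step is the elementary fact that a product of subsets of a product of abelian groups is a subgroup exactly when every factor is a subgroup. Here the factors are nonempty, because any subgroup is nonempty. For the nontrivial direction, given that the product is a subgroup, fix $\mathfrak p$. Use elements that agree off $\mathfrak p$, or use $0$ in the other coordinates, which the subgroup contains. This shows each factor is closed under addition and negation and contains $0$.

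The second step applies Corollary~\ref{cor:locus-geometry}(2) at each $\mathfrak p\in\Sigma$. It says $E_{\ast,\mathfrak p}$ is an additive subgroup exactly when $a_{\mathfrak p}=c_{\mathfrak p}=1$. In that case it equals the whole space $k_{\mathfrak p}$ when $r_{\mathfrak p}=s_{\mathfrak p}=1$, and equals $\{0\}$ otherwise. The condition $a_{\mathfrak p}=c_{\mathfrak p}=1$ says precisely that both $A_{(\mathfrak p)}$ and $C_{(\mathfrak p)}$ are annihilated by $\mathfrak p$, that is, both are semisimple. This gives the stated equivalence and the description of the local factors. The simple-by-simple case is $r_{\mathfrak p}=s_{\mathfrak p}=1$, where the extension space is $\Ext^1(D/\mathfrak p,D/\mathfrak p)\cong k_{\mathfrak p}$.

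The main point to check, rather than a real obstacle, is that the identification \eqref{eq:ext-primary-product} is additive for Baer sum and compatible with the local matrix coordinates. This follows because $\Ext^1$ is additive in each variable and the primary decomposition is a finite biproduct decomposition. I would also note the edge case $\Sigma=\varnothing$. There $\Ext^1_D(C,A)=0$, the locus is $\{0\}$, it is trivially a subgroup, and the condition is vacuous, so this case is consistent with the statement.
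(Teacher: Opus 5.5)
Your proposal is correct and follows the same route as the paper. You identify $\mathcal P_{\ast}(A,C)$ with the product of the local loci via \eqref{eq:ext-primary-product} and Theorem~\ref{thm:global-phase}(2), then apply Corollary~\ref{cor:locus-geometry}(2) prime by prime; your explicit product-of-subsets argument (a subgroup forces every local factor to be nonempty and to contain $0$) and your Baer-sum compatibility check spell out what the paper's short proof leaves implicit.
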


\begin{proof}
The global preservation locus is the product of the local sets
\eqref{eq:c4star-locus-general}.  By Corollary~\ref{cor:locus-geometry}, a
nonempty local set is additive exactly when the two local exponents are one.
The assertion follows from \eqref{eq:ext-primary-product}.
\end{proof}

Thus the obstruction to an exact structure obtained by selecting all
property-preserving Yoneda classes is now complete in finite length: cyclic
nonsimple overlap produces a unit stratum, mixed higher-rank overlap can
produce the empty set, and semisimple overlap is the unique additive regime.
Here ``exact structure'' is used in the standard Quillen--Yoneda sense; see
\cite{Buehler2010Exact}.

\section{Conclusion}

The exact-sequence behavior of $C4^{\ast}$-modules is asymmetric for a
simple reason: the property is hereditary by definition but not
cohereditary.  Kernels are therefore automatic.  Extensions and cokernels
are not, and the polynomial examples in Section~3 exhibit both
failures with no appeal to abstract obstruction schemes.

The principal positive result is the Smith-orbit theorem.  Between any two
homocyclic primary $C4$-modules, endpoint-automorphism orbits in $\Ext^1$
are classified by a valuation--rank profile over a truncated valuation ring.
The profile determines every elementary divisor of the middle term.  The
$C4$ locus consequently collapses to the equal-exponent zero orbit and the
equal-rank invertible orbit, while the $C4^{\ast}$ locus collapses further to
the semisimple zero orbit and the cyclic unit orbit.  Scalar valuation and
residue-field rank are not separate phenomena but the two boundary forms of
this filtered invariant.

Primary decomposition globalizes the local theorem to arbitrary
finite-length torsion $C4^{\ast}$ end terms over a Dedekind domain.  It also
reveals a rigid trichotomy unavailable from closure arguments alone: the
preservation locus may be the full local extension space, its zero subgroup,
a nonadditive unit stratum, or the empty set.  In particular, the mixed
nonsimple cyclic--semisimple configuration admits no $C4$ middle term at all.
Thus the exact-sequence obstruction is determined at the level of complete
automorphism orbits, not merely by isolated counterexamples.

\section*{Acknowledgements}

The author thanks the Department of Mathematics, Government College
(Autonomous), Rajahmundry, for institutional support.

\bibliographystyle{amsplain}
\bibliography{references}

\end{document}